\newlength{\myparskip}              \newlength{\myparindent}             %
\newlength{\oldparskip}             \newlength{\oldparindent}            %
\newenvironment{proof}{\topsep=\smallskipamount \partopsep=0pt  %
 \begin{trivlist} \itemindent=\parindent                        %
 \item[\hskip \labelsep\emph{Proof:}]}{\qed\end{trivlist}}      %
\let\qed=\relax                                                 %
\def\qed                                                        %
   \quad\hbox{}\nobreak\hfil $\Box$                             %
\def\@thmcountersep{-}                                          %
\newtheorem{theorem}{Theorem}[section]                          %
\newtheorem{proposition}[theorem]{Proposition}                  %
\newtheorem{lemma}[theorem]{Lemma}                              %
\newcommand{\institute}[1]{\newcommand{\where}{#1}}
\renewcommand{\and}{\text{and }}
\newcommand{\letitre}{The Nahm transform for calorons}
\newcommand{\ladate}{May 16, 2007.}
\title{\letitre}
\author{Benoit Charbonneau
   \and Jacques Hurtubise
\thanks{The first author is supported by an NSERC PDF. The second author is
supported by NSERC and FQRNT grants.  The diagrams in this paper
were created using Paul Taylor's Commutative Diagram package.}}
\institute{Department of Mathematics and Statistics, McGill University\\
           805 Sherbrooke St. W, Montreal, Quebec, H3A 2K6, Canada.\\
           E-mails: benoit@alum.mit.edu and jacques.hurtubise@mcgill.ca}
\date{\ladate}
\DeclareMathAlphabet{\mathdj}{U}{msb}{m}{n}
\newcommand{\R}{\ensuremath{\mathdj {R}}}  
\newcommand{\C}{\ensuremath{\mathdj {C}}}  
\newcommand{\Z}{\ensuremath{\mathdj {Z}}}  
\newcommand{\PP}{\ensuremath{\mathdj {P}}} 
\newcommand{\PPP}{{\PP^1\times\PP^1}}      
\newcommand{\bbr}{\R}\newcommand{\bbz}{\Z}  
\newcommand{\bbc}{\C}\newcommand{\bbp}{\PP} 
\newcommand\bb[1]{{\text{\bf#1}}}           
\newcommand{\bbi}{\bb{I}}                   
\newcommand\bbt{\bb{T}}                     
\newcommand\func[1]{\operatorname{\mathrm{#1}}}
\newcommand\im{\func{im}}                   
\newcommand{\SR}{S^1\times\R^3}
\newcommand{\OO}{{\mathcal{O}}}            
\renewcommand{\Im}{\mathrm{Im}}            
\def\citep#1#2{\cite[{#1}]{#2}}
\newcommand\note[1]%
\begin{document}

\maketitle
\begin{center}\emph{Dedicated to Nigel Hitchin
         on the occasion of his sixtieth birthday.}\end{center}
\begin{abstract}
In this paper, we complete the proof of an equivalence given by Nye
and Singer of the equivalence between calorons (instantons on
$S^1\times \R^3$) and
solutions to Nahm's equations over the circle, both satisfying appropriate
boundary conditions.  Many of the key ingredients are
provided by a third way of encoding the same data which involves
twistors and complex geometry.
\end{abstract}

\section{Introduction}
One rather mysterious feature of the self-duality equations on
$\bbr^4$ is the existence of a quite remarkable non-linear transform, the Nahm
transform.  It maps solutions to the self-duality equations
on $\bbr^4$ invariant under a closed translation group $G\subset \bbr^4$ to
solutions to the self-duality equations on $(\bbr^4)^*$ invariant
under the dual group $G^*$. This transform uses spaces of solutions to
the Dirac equation, it is quite sensitive to boundary
conditions, which must be defined with care, and it is not straightforward: for
example it tends to interchange rank and degree.

The transform was introduced by Nahm as an adaptation of
the original ADHM construction of instantons (\cite{ADHM}) having the advantage
that it can be generalised.  The series of papers
\cite{Nahm,nahmEqn,corrigan-goddard} details Nahm's original transform.
In the case monopole case $G=\R$, the
transform takes the monopole to a solution to some
non-linear matrix valued o.d.e.s, Nahm's equations on an interval,
and there is an inverse transform giving back the monopole. The
mathematical development of this transform is due to Nigel
Hitchin(\cite{hitchinMonopoles}), who showed for the $SU(2)$ monopole how
the monopole and the corresponding solution to Nahm's equations are
both encoded quite remarkably in the same algebraic curve, the
spectral curve of the monopole. This work was extended to the cases of
monopoles for the other classical groups  in
\cite{hurtubiseMurray}. This extension illustrates just how odd
the transform can be: one gets solutions to the Nahm equations on a
chain of intervals, but the size of the matrices jump from interval to
interval.

The Nahm transform for other cases of $G$-invariance has been studied
by various authors (among others \cite{braam1989,benoitthesis,cherkis2001,
jardim2001,jardim2002,jardim2002b,nye,schenk1988,szilard});
a nice survey can be found in \cite{jardimsurvey}. Here,
we study the case of  ``minimal'' invariance,
  under $\bbz$; the fields are referred to as calorons, and the Nahm
  transform sends them to solutions of Nahm's equations over the
  circle. This case is very close to the monopole one, and indeed
  calorons can be considered as Ka\v c--Moody monopoles
  (\cite{garland-murray}).

This project started while we were studying the work of Nye and Singer
(\cite{nye,singernye}) on calorons; they consider the Nahm transform
directly, and do most of the work required to show that the transform
is involutive. The missing ingredients turn out to lie in complex
geometry, in precisely the same way Hitchin's extra ingredient of a
spectral curve complements Nahm's. The complex geometry  allows us
to complete the equivalence, which can then be used to compute the
moduli. It thus seems to us quite appropriate to consider this problem
in a volume dedicated to Nigel Hitchin, as it allows us to revisit
some of his beautiful mathematics, including some on calorons which
has remained unpublished.

In Section \ref{sec:NyeSinger}, we summarise the work of Nye and Singer
towards showing that the Nahm transform is an equivalence
between calorons and appropriate solutions to Nahm's equations.
In Section \ref{sec:Kac-Moody}, following in large part on work
of Garland and Murray, we describe the complex geometry (``spectral
data'') that encodes a caloron. In Section \ref{sec:NahmSpectralData},
we study the process by
which spectral data also correspond to solutions to
Nahm's equations. In Section \ref{sec:closingcircle}, we close the circle, showing the
two Nahm transforms are inverses.  In Section \ref{moduli}, we give a
description of moduli, expounded in \cite{benoitjacques1}.


\section{The work of Nye and Singer}\label{sec:NyeSinger}
\subsection{\texorpdfstring{Two types of invariant self-dual gauge fields on $\bbr^4$.}%
                           {Two types of invariant self-dual gauge fields on R4.}}
\label{sec:gaugefields}
Nye and Singer study the Nahm transform between the following two
self-dual gauge fields (we restrict ourselves to $SU(2)$, thought they
study the more general case of $SU(N)$):
\bigskip

\noindent{\bf A- $SU(2)$ Calorons of charge $(k,j)$.}

$SU(2)$-calorons are \emph{self-dual} $SU(2)$-connections on
$\SR$, satisfying appropriate boundary conditions. We view $\SR$ as
the quotient of the standard Euclidean $\bbr^4$ by the time
translation $(t,x)\mapsto (t+2\pi/\mu_0,x)$.
Let $A$ be such a connection, defined over a rank 2 vector bundle $V$
equipped with a unitary structure; we write it in coordinates over
$\bbr^4$ as
\begin{equation*}
A = \phi dt+\sum_{i=1,2,3}A_idx_i,
\end{equation*}
with associated covariant derivatives
\begin{equation*}
\nabla = \bigl(\frac{\partial}{\partial t} + \phi\bigr) dt+
\sum_{i=1,2,3}\bigl(\frac{\partial}{\partial x_i} + A_i\bigr) dx_i
 \equiv \nabla_t dt+\sum_{i=1,2,3}\nabla_i dx_i.
\end{equation*}
We require that the $L^2$ norm of the curvature of $A$ be
finite, and that in suitable gauges, the $A_i$ be $O(|x|^{-2})$, and that
$\phi$ be conjugate to $diag (i(\mu_1 - j/2|x|), i(-\mu_1 +j/2|x|)) +
O(|x|^{-2})$ for a positive real constant  $\mu_1$ and a positive
integer  $j$ (the monopole charge). We also have bounds on the derivatives
of these fields.

The boundary conditions tell us in essence that in a suitable way the
connection extends to the $2$-sphere at infinity times $S^1$;
furthermore, one can show that the extension is to a fixed connection,
which involves fixing a trivialisation at infinity;
there is thus a second invariant we can define, the relative second Chern
class, which we represent by a (positive) integer $k$. There are then
two integer charges for our caloron, $k$ and $j$, respectively the
instanton and monopole charges.

There is  a suitable group of gauge transformations acting on these
fields  \cite{nye}: Nye's approach is to compactify to $S^1\times \bar B^3$
with a fixed trivilisation over the boundary $S^1\times S^2$. The gauge
transformations are those extending smoothly the identity on the boundary.
Two solutions are considered to be equivalent if they are in the same
orbit under this group.

\bigskip

{\bf B- Solutions to Nahm's equations on the circle.}

The second class of objects we consider are skew adjoint matrix valued
functions $T_i(z), i=0,\ldots,3$, of size
$(k+j)\times(k+j)$ over the interval $(-\mu_1, \mu_1)$, and of size
$(k)\times(k)$ over the interval $(\mu_1,\mu_0-\mu_1)$,
(hence $\mu_1<\mu_0-\mu_1$, so we impose that condition), that are
solutions to Nahm's equations
\begin{equation}\label{eqn:Nahm}
\frac{dT_{\sigma(1)}}{dz} + [T_0,T_{\sigma(1)}] =[T_{\sigma(2)},T_{\sigma(3)}],
\quad \text{ for $\sigma$ even permutations of }(123),
\end{equation}
on the circle $\bbr/(z\mapsto z+\mu_0)$.
These  equations are reductions of the self-duality
equations to one dimension, and are invariant under a group of gauge
transformations under which the
$\frac{d}{dz} + T_0$ transforms as a connection.

We think of the $T_i$ as sections of the endomorphisms of a vector bundle
$K$ whose rank jumps at the two boundary points; at these points $\pm
\mu_1$ we  need boundary conditions.
\bigskip

{\it Case 1: $j\neq 0$}.  At each of the boundary points, there is a
large side, with a rank $(k+j)$ bundle, and a small side, with a rank
$k$ bundle.  We attach the two at the boundary point using an
injection $\iota$ from the small side into the large side, and a
surjection $\pi$ going the other way, with $\pi\cdot \iota$ the identity.
One asks, from the small side, that the $T_i$ have well defined limits
at the boundary point. From the large side, one has a decomposition
near the boundary points of the bundle $\bbc^{k+j}\times [-\mu_1,
\mu_1]$ into an orthogonal sum of subbundles of
rank $k$ and $j$  invariant with respect to the connection $\frac{d}{dz}
+ T_0$ and compatible with the maps $\iota, \pi$. With respect to
this decomposition the $T_i$ have the form
\begin{equation}
T_i = \begin{pmatrix} \hat T_i& O(\hat z^{(j-1)/2})\\
                    O(\hat z^{(j-1)/2})&R_i\end{pmatrix},
\end{equation}
in a basis where $T_0$ is gauged to zero and for a choice of coordinate
 $\hat z := z-(\pm\mu_1)$.  At the
boundary point  $\hat z = 0$, each $\hat T_i$ has a well defined limit
that coincides using $\pi, \iota$ with the limit from the small side. The
$R_i$ have simple poles, and we ask that their residues form an
irreducible representation of $su(2)$.
\bigskip

{\it Case 2: $j=0$}.
Here the boundary conditions are different. We have at the boundary,
identification of the fibres of $K$ from both sides. With this
identification, we ask  that, in a gauge where $T_0$ is zero, the
$T_i$ have well defined limits $(T_i)_\pm$ from both sides,
and that, setting
\begin{equation}\label{eqn:A}
A(\zeta) = (T_1+ iT_2) -2iT_3\zeta +
  (T_1-iT_2)\zeta^2, \end{equation}
one has that taking the limits from both sides,
\begin{equation}\label{jump}A(\zeta)_+ - A(\zeta)_- =
  (\alpha_0+\alpha_1\zeta)(\bar \alpha_1- \bar \alpha_0\zeta)^T
\end{equation}
for vectors $\alpha_0,\alpha_1$ in $\bbc^k$. In particular,
$A(\zeta)_+ - A(\zeta)_-$ is of rank at most one for all $\zeta$.
\bigskip

In both cases, there is a symmetry condition, that, in a suitable gauge,
\begin{equation*}
T_i(0) = T_i(0)^T
\end{equation*}
and, finally, an irreducibility condition, that there be no covariant
constant sections of the bundle left invariant under the matrices
$T_i$. Solutions for which there are such covariant constant sections
 should correspond to cases where instanton charge has bubbled off,
leaving behind a caloron of lower charge.

\subsection{The Nahm transform}
\subsubsection*{From caloron to a solution to Nahm's equations.}
Let $z$ be a real parameter. Using the Pauli matrices $e_i$,
we define  the Dirac operators $D_z$
acting on sections of the tensor product of
the vector bundle $V$
with the spin bundles
$S_\pm \simeq \bbc^2\times \SR$ by
\begin{align*}
D_z\colon \Gamma(V\otimes S_+)&\rightarrow \Gamma(V\otimes S_-)\\
  s &\mapsto ( \nabla_t + iz)s + \sum_i (e_i\nabla_i ) s.
\end{align*}
The Weitzenbock formula $D_zD_z^*=\nabla^*\nabla+\rho(F_{A_z}^-)$
guarantees that $D_z^*$ is injective.
Nye and Singer show in \cite{singernye} that for  $z$ not in the set of lifts
$\pm\mu_1+ n \mu_0$ ($n\in \bbz$) of the
boundary point $\pm\mu_1$ from the circle to $\bbr$, and
with suitable choices of function
spaces, the operator $D_z$ is Fredholm, and its
index away from these points is $k+j$ for $z$ lying in
the  intervals $(-\mu_1 + n\mu_0,\mu_1 +n\mu_0)$ and is $k$ in the
intervals $(\mu_1 + n\mu_0,-\mu_1 +(n+1)\mu_0)$. There is thus a bundle
over $\bbr$ whose rank jumps at $\pm\mu_1+ n \mu_0$ ($n\in \bbz$), with
fibre $\ker (D_z)$ at $z$. There is a natural way of
shifting by $\mu_0$, identifying $\ker (D_z)$ with $\ker
(D_{z+\mu_0})$, giving a bundle $K$ over the circle.

Over each interval, this bundle sits inside the trivial bundle whose
fibre is the space of $L^2$ sections of $V\otimes S_-$. Let $P$ be the
orthogonal projection from this trivial bundle onto $K$.
As elements of $K$ decay exponentially,
the operation $X_i$ of multiplying by the coordinate $x_i$ can be
used to define operators on sections of $K$ by
\begin{align*}
 \frac{d}{dz} + T_0&= P\cdot \frac{d}{dz},\\
T_i& = P\cdot X_i.
\end{align*}

\begin{theorem}[\citep{Sec 4.1.2, p.108}{nye}]
The operators defined in this way satisfy Nahm's equations.
\end{theorem}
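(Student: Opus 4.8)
The plan is to derive Nahm's equations by a direct computation with the projected operators, following the standard template for Nahm transforms (as in Hitchin's monopole case) but paying attention to the boundary behaviour and the $z$-dependence of $\ker D_z$. First I would set up notation: write $\nabla_z = \nabla_t + iz$, so that $D_z = \nabla_z + \sum_i e_i\nabla_i$, and record the key algebraic identity coming from self-duality, namely that $D_z^* D_z$ is a scalar operator (the Weitzenbock term $\rho(F^+_{A})$ vanishes since $A$ is self-dual), so $G := (D_z^*D_z)^{-1}$ is a well-defined Green's operator commuting with the Clifford multiplications in the appropriate sense. The projection onto $K$ is then $P = 1 - D_z G D_z^*$, and I would compute $dP/dz$ and $[P, X_i]$ in terms of $G$, $D_z$, and the multiplication operators, using $\partial_z D_z = i$ (multiplication by $i$) and $[\,\nabla_i, X_j] = \delta_{ij}$.

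Next, with $T_0 = P\frac{d}{dz} - \frac{d}{dz}$ (acting on sections of $K$) and $T_i = P X_i P$, I would expand the left- and right-hand sides of \eqref{eqn:Nahm}. The curvature term $\frac{dT_1}{dz} + [T_0, T_1]$ becomes, after substituting $P = 1 - D_z G D_z^*$ and commuting derivatives through, a sum of terms each containing a factor $D_z^*$ on the right or $D_z$ on the left sandwiched against $PX_iP$; since $P D_z = 0$ and $D_z^* P = 0$, most terms collapse. The Pauli matrix identities $e_i e_j = -\delta_{ij} + \sum_k \varepsilon_{ijk} e_k$ are what convert the remaining antisymmetric combination $[PX_2P, PX_3P]$ into the expression matching $\frac{dT_1}{dz}+[T_0,T_1]$; concretely, $D_z^* e_i D_z$-type cross terms produce the $\varepsilon_{ijk}$ structure. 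This is the algebraic heart and is essentially a bookkeeping exercise once the Green's operator identities are in hand. I would verify the computation is gauge-covariant so that it descends to the bundle $K$ over the circle (the shift identification $\ker D_z \cong \ker D_{z+\mu_0}$ being compatible with all operators since it is induced by the $\mu_0$-periodicity of $V$ and $A$).

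The main obstacle I expect is justifying that the formal manipulations are legitimate \emph{analytically}: the sections in $\ker D_z$ live on the non-compact manifold $\SR$, the operators $X_i$ are unbounded, and one must know that integration by parts produces no boundary contributions and that $G$ maps the relevant weighted Sobolev spaces to each other boundedly. Here I would lean on the Fredholm package established by Nye and Singer in \cite{singernye}: the exponential decay of elements of $\ker D_z$ (stated in the excerpt) makes $X_i s$ again $L^2$ and kills boundary terms at spatial infinity, and the Weitzenbock identity gives the needed elliptic estimates for $G$. One subtlety is differentiability in $z$ of the family $z\mapsto \ker D_z$ and of $P$; away from the jumping points $\pm\mu_1 + n\mu_0$ the index is locally constant and standard perturbation theory for the Green's operator $G = (D_z^*D_z)^{-1}$ gives smooth dependence, which is all that is needed since Nahm's equations are imposed only on the open intervals. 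The symmetry and reality properties ($T_i$ skew-adjoint, the $z=0$ symmetry) I would check separately from the explicit formulas, but they are not part of this theorem's assertion.
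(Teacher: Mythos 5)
The paper does not actually prove this theorem: it cites Nye's thesis \cite[Sec.\ 4.1.2, p.\ 108]{nye} and, in the paragraph immediately following the statement, offers only the general Nahm-transform heuristic (the curvature of the transformed object is a self-dual piece plus a boundary contribution coming from the behaviour at infinity of the harmonic spinors, and the latter vanishes because the spinors decay exponentially), deferring to \cite[Sec.\ 3]{benoitpaper} for details. Your plan is the standard Green's-operator computation that \emph{realises} that heuristic, and it is in line with what the cited references do, so you are not taking a genuinely different route so much as supplying a proof the present paper declines to write out.

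Two small points worth tightening. First, your projection formula is dualised: to project onto $K_z=\ker D_z\subset L^2(V\otimes S_+)$ one wants
\[
P = 1 - D_z^{*}\,\bigl(D_zD_z^{*}\bigr)^{-1} D_z ,
\]
with the Green's operator $G=(D_zD_z^{*})^{-1}$; what makes the argument go is precisely the Weitzenbock identity in the paper, $D_zD_z^{*}=\nabla^{*}\nabla+\rho(F^{-}_{A_z})=\nabla^{*}\nabla$ for a self-dual $A$, which shows $G$ is a \emph{scalar} Laplacian inverse, hence commutes with Clifford multiplication; this is the point on which the whole bookkeeping hinges and you are right to flag it. Second, the operators in the paper are written $T_i = P\cdot X_i$ rather than $PX_iP$, but since they are taken as endomorphisms of $K$ this is immaterial. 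Your discussion of the analytic side — Fredholmness away from $\pm\mu_1+n\mu_0$, exponential decay making $X_i s\in L^2$ and killing boundary terms, smooth dependence of $G$ on $z$ on the open intervals, and compatibility of the shift $\ker D_z\cong\ker D_{z+\mu_0}$ with all the operators — is exactly the Nye--Singer input one needs, and is consistent with the paper's remark that the boundary contribution to the transformed curvature vanishes by exponential decay.
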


This theorem and Theorem \ref{nahm-to-caloron} below
fall in line with the general Nahm transform heuristic
philosophy: the curvature
of the transformed object, seen as an invariant connection on $\R^4$,
is always composed of a self-dual piece and another piece depending on the
behaviour at infinity of the harmonic spinors.  Since these are decaying
exponentially here, that other piece is zero and
the transformed object is self-dual, or equivalently
once we reduce, it satisfies Nahm's equations; see for instance
\cite[Sec. 3]{benoitpaper}.

\subsubsection*{From  a solution to Nahm's equations to a caloron.}
For the inverse transform, we proceed the same way: from a solution to Nahm's equations,
we define a family of auxiliary
operators parameterised by points of $\bbr^4$
acting on sections of $K\otimes \bbc^2$ by
\begin{equation*}
D_{x,t} = i\bigl(\frac{d}{dz} +T_0 -it + \sum_{j= 1,2,3}e_j (T_j -ix_j)\bigr).
\end{equation*}

 One must again distinguish two cases, $j>0$ and $j=0$.

{ $j>0$:}  We define the  space $W$ of $L^2_1$ sections of $K\otimes
\bbc^2$ such that at $\pm\mu_1$ the values of the sections coincide:
for the sections $s_1$ on the small side and
$s_2$ on the large side, we need $\iota(s_1)=s_2$.
Let $X$ be the space of $L^2$ sections of $K\otimes \bbc^2$ over the
circle. Set
\begin{equation}
\hat D_{x,t} := D_{x,t}\colon W\rightarrow X
\end{equation}

{ $j=0$:} We have as above the space $W$ of sections. We define in addition
a two dimensional space $U$ associated with the end points
$\pm\mu_1$. The jump condition given by Equation (\ref{jump}) at $\mu_1$
imply that there is a vector $u_+ \in V_{\mu_1}\otimes\bbc^2$
such that, as elements of $End(V_{\mu_1}\times \bbc^2)$,
\begin{equation*}\sum_{j=1}^3 \bigl((T_j)_+-(T_j)_-\bigr)\otimes e_j =
  (u_+ \otimes u_+ ^*)_0.
\end{equation*}
Here the subscript $(\ )_0$ signifies
taking the trace free $End(V)\otimes Sl(2,\bbc)$
component inside $End(V)\otimes End(\bbc^2)\simeq End(V\otimes\bbc^2)$.

One has a similar vector $u_-$ at $-\mu_1$. Let $U$ be the
vector space spanned by $u_+, u_-$. Let $\Pi\colon V_{\mu_1}\oplus V_{-\mu_1}\rightarrow U$
be the orthogonal projection, let $X$
be the sum of the space of $L^2$ sections of V  with the space $U$,
and set
\begin{equation}
\hat D_{x,t} := (D_{x,t}, \Pi)\colon W\rightarrow X
\end{equation}
The kernel of this operator consists of sections $s$ in the kernel of
$D_{x,t}$, with values at $\pm\mu_1$ lying in $U^\perp$; the cokernel
consists of triples $(s,c_{\mu_1}u_{\mu_1},c_{-\mu_1}u_{-\mu_1})$,
where $s$ is a  section of $K\otimes \bbc^2$, lying in the kernel of
$D_{x,t}^*$, with jump discontinuity $c_+u_+$ at $\mu_1$,
and $c_-u_-$ at $-\mu_1$.

\begin{theorem}[\citep{p. 104}{nye}] \label{nahm-to-caloron}
The operator $\hat D_{x,t}$ has trivial kernel for all $(x,t)$, and
has a cokernel of rank 2, defining a rank 2 vector bundle over
$\bbr^4$, with natural time periodicity which allows one to build a
vector bundle $V$ over $\SR$, defined locally as a
subbundle of the infinite dimensional bundle $X\times \SR$.
Let $P$ denote the orthogonal projection from $X$ to
$V$. Setting, on sections of $V$,
\begin{align*}
\nabla_i &= P\cdot \frac{\partial}{\partial x_i}, \quad i =1,2,3\\
\nabla_t &= P\cdot \frac{\partial}{\partial t},
\end{align*}
defines an $SU(2)$-caloron.
\end{theorem}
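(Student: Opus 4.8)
The plan is to run the standard inverse-Nahm-transform argument — a vanishing theorem for the kernel, an index count for the cokernel, and a Green's-operator computation for self-duality — the one genuinely delicate point being the matching and jump conditions at $z=\pm\mu_1$. First I would show $\ker\hat D_{x,t}=0$ for every $(x,t)$ by proving that $\hat D_{x,t}^*\hat D_{x,t}$ is a positive operator on $W$. Integrating by parts, $\langle\hat D_{x,t}^*\hat D_{x,t}s,s\rangle$ splits as a manifestly nonnegative one-dimensional Weitzenböck term — here one uses that Nahm's equations \eqref{eqn:Nahm} are exactly the condition that the curvature of the connection $\tfrac{d}{dz}+T_0-it+\sum_j e_j(T_j-ix_j)$ on $\R^4$ be self-dual, so the analogue of the $\rho(F^-_{A_z})$ term in the Weitzenböck formula drops out — plus boundary contributions at $z=\pm\mu_1$. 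The block form with the $R_i$ having residues an irreducible $su(2)$-representation (case $j>0$), respectively the rank-one jump \eqref{jump} together with the auxiliary summand $U$ and the map $\Pi$ (case $j=0$), are designed precisely so that these boundary terms vanish or are absorbed by the $\Pi$-component of $X$; a would-be kernel element would then have to be covariant constant and invariant under all the $T_i$, which the irreducibility hypothesis rules out.

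Since the kernel vanishes identically, $\dim\coker\hat D_{x,t}=-\operatorname{ind}\hat D_{x,t}$ is independent of $(x,t)$, and I would pin the index down to $-2$ either by a relative index computation against the flat model solution or by letting $|x|\to\infty$, where $D_{x,t}$ decouples into pieces whose indices are easy to read off; this is consistent with the rank $2$ of an $SU(2)$ bundle. Constancy of both kernel and cokernel then makes the cokernels patch together into a smooth rank $2$ bundle over $\R^4$, realised locally as $\ker\hat D_{x,t}^*\subset X$ with $P$ depending smoothly on $(x,t)$. For the time-periodicity, conjugating $D_{x,t}$ by multiplication by $\exp(2\pi i z/\mu_0)$ — a gauge transformation well defined on the circle $\R/\mu_0\Z$ — identifies $\hat D_{x,t+2\pi/\mu_0}$ with $\hat D_{x,t}$, so the bundle with its projected connection descends to $\SR$, giving $V$.

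Self-duality of $\nabla=P\cdot d$ I would get from the usual identity expressing $F_\nabla$ through $P$ and $dP$, hence through $\hat D_{x,t}$, $\hat D_{x,t}^*$, and the Green's operator $G=(\hat D_{x,t}^*\hat D_{x,t})^{-1}$: the anti-self-dual part of $F_\nabla$ is a contraction of $G$-sandwiched commutators against self-dual $2$-forms and vanishes because $\hat D_{x,t}^*\hat D_{x,t}$, and therefore $G$, is quaternion-linear, i.e. commutes with the Clifford action of the $e_j$ on the $\bbc^2$-factor — here again one needs the boundary and jump conditions so that the integrations by parts producing the identity leave no boundary error. The $SU(2)$-structure comes from the symmetry hypothesis: $T_i(0)=T_i(0)^T$, combined with the reflection $z\mapsto-z$ and complex conjugation, furnishes an antilinear quaternionic involution on the family $\{\hat D_{x,t}\}$ compatible with all of the above, which descends to a quaternionic structure on $V$ and reduces the structure group to $Sp(1)=SU(2)$.

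Finally one must check $(V,\nabla)$ really is a caloron of charge $(k,j)$: finite curvature $L^2$-norm, $O(|x|^{-2})$ decay of the $A_i$, and $\phi$ asymptotic to $\operatorname{diag}\bigl(i(\mu_1-j/2|x|),\,i(-\mu_1+j/2|x|)\bigr)+O(|x|^{-2})$, with relative second Chern class $k$. This is an asymptotic analysis of $\coker\hat D_{x,t}$ as $|x|\to\infty$, where the cokernel localises near $z=\pm\mu_1$ on the rank-$j$ part and onto constant modes on the rank-$k$ part; the eigenvalues of $\phi$ and the two charges are read off from this localisation and an index/spectral-flow count across the gluing at $\pm\mu_1$. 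I expect the main obstacle to be precisely the analysis at the singular points $z=\pm\mu_1$: choosing weighted Sobolev spaces that make $\hat D_{x,t}$ Fredholm in the first place, and verifying that \emph{all} the boundary terms in the positivity computation and in the self-duality identity genuinely cancel given the prescribed block form, the irreducible $su(2)$-residues, and the rank-one jump — everything else being bookkeeping against the known monopole and instanton models.
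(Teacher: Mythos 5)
The paper does not prove this theorem: it is quoted from Nye's thesis (the citation \citep{p. 104}{nye} in the theorem header), and the only comment the authors make in its direction is the heuristic paragraph following Theorem~2.1, which says the transformed field is self-dual because the curvature's extra piece, governed by the behaviour at infinity of the harmonic spinors, vanishes. There is therefore no in-paper proof to compare against, and I can only judge your sketch on its own terms.

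Your sketch correctly identifies and sequences the standard ingredients of the inverse Nahm transform as carried out by Nye: Weitzenb\"ock positivity of $\hat D_{x,t}^*\hat D_{x,t}$ (using that Nahm's equations kill the curvature term) to get trivial kernel, with the covariant-constant/common-eigenvector alternative ruled out by irreducibility; constancy of the cokernel dimension via Fredholmness and an index count pinned to $2$; the gauge conjugation by $e^{2\pi i z/\mu_0}$ on $\bbr/\mu_0\bbz$ for the periodicity in $t$; the $G$-commutes-with-Clifford argument for self-duality; and the quaternionic/reality structure for the reduction to $SU(2)$. All of this matches the structure of Nye's argument and the monopole model it imitates. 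Two things you flag as the ``genuinely delicate point'' are indeed where all the work lives and where a blind sketch cannot be taken as a proof: (i) the precise function-space setup (the $\tilde L^2$-type spaces with matching, and the extra summand $U$ with the map $\Pi$ for $j=0$) that makes $\hat D_{x,t}$ Fredholm and makes the boundary terms in both the positivity and the self-duality computations vanish, and (ii) the asymptotic analysis of the cokernel near $z=\pm\mu_1$ needed to read off the caloron boundary behaviour, monopole charge $j$, and relative $c_2=k$. You correctly defer these, but they are the substance; in particular, the $j=0$ case, where the $\Pi$-component of $X$ is essential and the cokernel elements carry a jump discontinuity, does not follow from the ``boundary terms vanish'' slogan without a dedicated computation. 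One small caution on the $SU(2)$ reduction: the quaternionic structure on $\ker\hat D_{x,t}^*$ comes primarily from the quaternionic structure on the spin factor $\bbc^2$ together with the skew-adjointness of the $T_i$; the symmetry condition $T_i(0)=T_i(0)^T$ enters in fixing the real/framing data (it is the condition $A(-z)=A(z)^T$ used in Section~\ref{sec:boundary-conditions}), and your phrasing via ``reflection and conjugation'' is plausible but would need to be reconciled with that.
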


\subsection{Involutivity of the transforms.}
We would like these two transforms to be inverses of each other, so
that they define an equivalence. The results of Nye and Singer get us
most of the way there.  What is missing is a proof that the solutions
to Nahm's equations one obtains from a caloron satisfy the correct
boundary and irreducibility conditions and then, that the two
constructions are inverses to one another.

The easiest way to do so, as Hitchin did in his original paper on
$SU(2)$ monopoles, is to exploit the regularity given by a third
equivalent set of data, which involves complex geometry. To do this,
we examine some  work of
Garland and Murray, building on some  unpublished work of Hitchin.


\section{\texorpdfstring{The twistor transform for calorons/Ka\v c--Moody monopoles}%
                        {The twistor transform for calorons/Kac-Moody monopoles}}
\label{sec:Kac-Moody}
\subsection{Upstairs: the twistor transform for calorons.}
Like all self-dual gauge fields on $\bbr^4$ or its quotients
$\bbr^4/G$, calorons admit a twistor transform, translating the gauge
fields into holomorphic vector bundles on an auxiliary space, the
twistor space associated to  $\bbr^4/G$. Following
\cite[Sec. 2]{garland-murray}, we summarise the construction, again for
$SU(2)$ only.

It is convenient first to recall from \cite[Sec. 3]{hitchinGeodesics} the twistor space
for $\bbr^3$. It can be interpreted as the space of oriented lines
in $\bbr^3$ and it is isomorphic to $T\bbp^1$, the tangent bundle of the
Riemann sphere. Let $\zeta$ be the standard affine coordinate on
$\bbp^1$, and $\eta$ be the fibre coordinate in $T\bbp^1$ associated
to the basis vector ${\partial}/{\partial \zeta}$. The
incidence relation between the standard coordinates on $\bbr^3$ and
$(\zeta, \eta)$ is given by
\begin{equation*}
\eta = (ix_1-x_2)+2x_3\zeta + (ix_1+x_2)\zeta^2.
\end{equation*}
The space $T\bbp^1$ comes equipped with standard line bundles ${\cal
  O}(k)$, lifted from $\bbp^1$, and line bundles $L^t$, parameterised by
$t\in \bbc$,
with transition function $exp (t\eta/\zeta)$ from the open set
$\zeta\neq\infty$ to the open set $\zeta\neq 0$.
For $t\in\R$, these line bundles correspond to the standard $U(1)$
monopoles on $\bbr^3$ given by a flat connection and constant Higgs
field $it$. Let $L^t(k) := L^t\otimes {\cal O}(k)$. These bundles
are in some sense the building blocks for monopoles for higher gauge
groups.

According to Garland and Murray,  the twistor space $\cal T$ for $\SR$
parametrises pairs consisting of a
point in $S^1\times \R^3$ and a unit vector in $\R^3$.  Such a pair gives
a cylinder along which to integrate, and that projects to
an oriented line in $\R^3$.  There is then a $\bbc^*$
fibration $\pi\colon {\cal T}\rightarrow
T\bbp^1$, which is in fact holomorphic and
the complement of the zero
section in $L^{\mu_0}$. It has a natural fibrewise compactification
$\tilde{\cal T} = \bbp({\cal O}\oplus L^{\mu_0})$
compactifying the cylinders into spheres. The complement $\tilde{\cal
  T}\backslash{\cal T}$ is a sum of two disjoint divisors ${\cal T}^0$
and ${\cal T}^\infty$ mapping isomorphically to $T\bbp^1$.

In the monopole case, the holomorphic vector bundle was obtained by
integrating $\nabla_s - i\phi$ along real lines using a metric
coordinate $s$ along the line, and the Higgs field $\phi$. Here, the
analogous operation is integrating $\nabla_s - i\nabla_t$ over the
cylinders to obtain a vector bundle $E$ over $\cal T$. The
boundary conditions allow us to extend the bundle to the
compactification $\tilde{\cal T}$; we denote this extension again by
$E$. The boundary conditions also give line subbundles over these
divisors, $E^0_1 =L^{-\mu_1}(-j)$ over
${\cal T}^0$, corresponding  to solutions of $\nabla_s - i\phi$
which decay as one goes to plus infinity on the cylinder, and
$E^\infty_1 =L^{\mu_1}(-j)$ over
${\cal T}^\infty$, corresponding to solutions which decay as one goes
to minus infinity.

The twistor space has a real structure $\tau$, which acts on the
bundle by $\tau^*(E) = E^*$, and maps the line subbundle $L^{-\mu_1}(-j)$
over ${\cal T}^0$ to the annihilator of the subbundle $L^{\mu_1}(-j)$ over
${\cal T}^\infty$.

\subsection{\texorpdfstring{Downstairs: the caloron as a Ka\v c--Moody monopole.}
                           {Downstairs: the caloron as a Kac-Moody monopole.}}
As Garland and Murray show in \cite[Sec. 6]{garland-murray},
there is very nice way of
thinking of the caloron as a monopole over $\bbr^3$, with values in a
Ka\v c--Moody algebra, extending a loop algebra: the fourth variable $t$
becomes the internal loop algebra variable. This way of thinking goes over
to the twistor space picture.

Indeed, the twistor transform for the caloron gives us a bundle $E$
over ${\cal T}$. Taking a direct image (restricting to sections with
poles of finite order along $0,\infty$) gives one an infinite
dimensional vector bundle $F$ over $T\bbp^1$. If $w$ is a standard
fibre coordinate on ${\cal T}$ vanishing over ${\cal T}^0$ and with a
simple pole at ${\cal T}^\infty$, it induces an endomorphism $W$ of
$F$, and quotienting $F$ by the $\cal O$-module generated by the image
of $W-w_0$ gives us the restriction of $E$ to the section $w=w_0$, so
that $E$ and $(F,W)$ are equivalent. One has more, however; the fact
that the bundle $E$ extends to $\tilde {\cal T}$ gives a subbundle
$F^0$ of sections in the direct image which extend over ${\cal T}^0$,
and a subbundle $F^\infty$ of sections in the direct image which
extend over ${\cal T}^\infty$. One can go further and use the
flags $0=E^0_0\subset E^0_1\subset E^0_2 =E$ over  ${\cal T}^0$,
$0=E^\infty_0\subset E^\infty_1\subset E^\infty_2 =E$ over  ${\cal
  T}^\infty$ to define for $p\in \bbz$ and $q= 0,1$ subbundles $F^0_{p,q},
F^\infty_{p,q}$ of $F$ as
\begin{align*}
F^0_{-p,q} &=\{s\in F\mid w^{-p}s \text{ finite at }{\cal T}^0
                         \text{ with value in } E^0_q  \}, \\
F^\infty_{p,q} &=\{s\in F\mid w^{-p}s \text{ finite at } {\cal T}^\infty
                           \text{ with value in } E^\infty_q \}.
\end{align*}
We now have infinite flags
\begin{equation}\label{infiniteflags}\begin{gathered}
\cdots\subset F^0_{-1,0}\subset F^0_{-1,1} \subset F^0_{0,0} \subset F^0_{0,1} \subset F^0_{1,0} \subset F^0_{1,1} \subset\cdots\phantom{-.}\\
\cdots\supset F^\infty_{2,0}\supset F^\infty_{1,1} \supset F^\infty_{1,0} \supset F^\infty_{0,1} \supset F^\infty_{0,0} \supset F^\infty_{-1,1} \supset\cdots.
\end{gathered}\end{equation}
Note that $W\cdot F^0_{p,q} = F^0_{p+1,q}$ and $W\cdot F^\infty_{p,q}=
F^\infty_{p-1,q}$.  Garland and Murray show:
\begin{itemize}
\item $F^0_{p,0}$ and $F^\infty_{-p+1,0}$ have zero intersection and sum to
  $F$ away from a compact curve $S_0$ lying in the linear system
  $|{\cal O}(2k)|$;
\item $F^0_{p,1}$ and $F^\infty_{-p,1}$ have zero intersection and sum to
  $F$ away from a compact curve $S_1$ lying in the linear system
  $|{\cal O}(2k+2j)|$;
\item the quotients $F^0_{p,0}/F^0_{p-1,1}$ are line bundles
  isomorphic to $L^{(p-1)\mu_0+\mu_1}(j)$;
\item the quotients $F^0_{p,1}/F^0_{p,0}$ are line bundles isomorphic
  to $L^{p\mu_0-\mu_1}(-j)$;
\item the quotients $F^\infty_{p,0}/F^\infty_{p-1,1}$ are line
  bundles isomorphic to $L^{-(p-1)\mu_0-\mu_1}(j)$;
\item the quotients $F^\infty_{p,1}/F^\infty_{p,0}$ are line bundles
  isomorphic to $L^{-p\mu_0+\mu_1}(-j)$.
\end{itemize}

It is worthwhile stepping back now and seeing what we have from a
group theoretic point of view. On $T\bbp^1$, we have a bundle with
structure group  $G=\widetilde{Gl}(2,\bbc)$
of $Gl(2,\bbc)$-valued loops. The flags that we have found give two reductions $R_0,
R_\infty$ to the opposite Borel subgroups $B_0$, $B_\infty$ (of loops
extending to $0, \infty$ respectively in $\bbp^1$, and preserving
flags over these points) in
$G$. As for finite dimensional groups, we have exact sequences
relating the Borel subgroups, their unipotent subgroups, and the
maximal torus:
\begin{equation}
\begin{matrix}
0&\rightarrow&U_0&\rightarrow&B_0&\rightarrow&T&\rightarrow&0,\\
0&\rightarrow&U_\infty&\rightarrow&B_\infty&\rightarrow&T&\rightarrow&0.\\
\end{matrix}
\end{equation}
In a suitable basis, $B_0$, $B_\infty$ consist respectively of upper and lower
triangular matrices, and $T$ of diagonal matrices.

The two  reductions  are generically transverse, and fail to be
transverse over the spectral curves of the caloron. This failure of
transversality gives us geometrical data which encodes the bundle and
hence the caloron. This approach is expounded in
\cite{hurtubiseMurraySpectral}
for monopoles, but can be extended to calorons
in a straightforward fashion.
We summarise the construction here. The
vector bundle $F$ can be thought of as defining an element $f$ of the
cohomology set $H^1(T\bbp^1, G)$, and so a principal bundle $P_f$; one
simply thinks of $f$ as the transition functions for $P_f$, in terms
of \v Cech cohomology. The reductions to  $B_0$, $B_\infty$, can also be
thought of as elements $f_0$, $f_\infty$ of
$H^1(T\bbp^1, B_0)$, $H^1(T\bbp^1, B_\infty)$, respectively, or as
principal bundles
$P_{f_0}$, $P_{f_\infty}$, or, since they are reductions of $P_f$, as
sections $R_0$ of $P_f\times_G G/B_0 =
P_{f_\infty}\times_{B_\infty}G/B_0$ and $R_\infty$ of $P\times_G
G/B_\infty = P_{f_0}\times_{B_0}G/B_\infty$.

We note also that the elements $f_0$, $f_\infty$ when projected to
$H^1(T\bbp^1, T)$, give fixed elements $\alpha_0$, $\alpha_\infty$, in
the sense that they are independent of the caloron, or rather depend
only on the charges and the asymptotics, which we presume fixed. It
is a consequence of the fact that the successive quotients
in Sequence (\ref{infiniteflags})  depend only on the
$\mu_i$ and the charges. Let $A_0,A_\infty$ denote the principal $T$
bundles associated to $\alpha_0$, $\alpha_\infty$.

Let $A_0(U_0)$ be the sheaf of sections of the $U_0$ bundle associated to $A_0$ by
the action of $T$ on $U_0$.
The cohomology set $H^1(T\bbp^1,A_0(U_0))$ describes (\cite[Sec. 3]{hurtubiseMurraySpectral})
the set of $B_0$ bundles mapping to $\alpha_0$.
To find the bundle $f_0$, and so $f$, then, one simply needs
to have the appropriate class in $H^1(T\bbp^1,A_0(U_0))$.

Note that $U_0$ also serves as the big cell in the homogeneous
space $G/B_\infty$. Following \cite{Gravesen}, we
 think of $G/B_\infty$ as adding an infinity to $U_0$, so if
${\cal U}_0$ denotes the sheaf of holomorphic maps into $U_0$,  the
sheaf $\cal M$ of holomorphic maps into $G/B_\infty$ can be thought of
as meromorphic maps into $U_0$. Hence there is an
sequence of sheaves of pointed sets (base points are chosen compatibly
in both $U_0$ and $G/B_\infty$)
\begin{equation*}
0\rightarrow {\cal U}_0 \rightarrow{\cal M}\rightarrow {\cal P}r\rightarrow 0
\end{equation*}
defining the sheaf ${\cal P}r$ of principal parts. As $B_0, T$
act on these sheaves, we can build the  twisted versions
\[\begin{diagram}
P_{f_0}({\cal U}_0) &\rTo &P_{f_0}({\cal M}) &\rTo^{\phi}&P_{f_0}(\mathcal{P}r),\\
A_0({\cal U}_0) &\rTo &A_0({\cal M})& \rTo &A_0({\cal P}r).
\end{diagram}\]
As we are quotienting out the action of $U_0$,
we have $A_0({\cal P}r)\simeq P_{f_0}({\cal P}r)$.

We are now ready to define the \emph{principal part data} of the caloron.
The principal part data of the caloron is the image under $\phi$ of the class of
the reduction given as a section
$R_\infty$ of $P_{f_0}\times_{B_0}G/B_\infty = P_{f_0}({\cal M})$:
\begin{equation*}
\phi(R_\infty) \in H^0(A_0({\cal P}r)).
\end{equation*}
To get back the caloron bundle from the principal part data, one takes the
coboundary $\delta(\phi(R_\infty))$, in the obvious \v Cech sense, for
the second sequence, obtaining a class in $H^1(T\bbp^1, A_0({\cal
  U}_0))$. One checks that this is precisely the class corresponding to
the bundle $P_0$; while this seems a bit surprising, the construction
is fairly tautological, and is done in detail in
\cite[Sec. 3]{hurtubiseMurraySpectral}. In our infinite dimensional context,
the bundles are quite special, as they, and their reductions, are
invariant under the shift operator $W$.

Of course, this construction does not tell us what the mysterious class
$\phi(R_\infty)$ actually corresponds to, but it turns out that, over
a generic set of calorons, it is quite tractable. Indeed, the principal part
data, as a sheaf, is supported over the spectral curves, and, if the
curves have no common components and no multiple components, then the
principal part data amounts to the following
{\it spectral data} (\cite{garland-murray}):
\begin{itemize}
\item The two spectral curves, $S_0$ and $S_1$;
\item The ideal ${\cal I}_{S_0\cap S_1}$, decomposing as
${\cal I}_{S_{01}}\cdot {\cal I}_{S_{10}}$; the real structure
interchanges the two factors;
\item An isomorphism of line bundles ${\cal O}[-S_{10}]\otimes
  L^{\mu_0-2\mu_1}\simeq {\cal O}[-S_{01}]$ over $S_0$;
\item An isomorphism of line bundles ${\cal O}[-S_{01}]\otimes
  L^{2\mu_1}\simeq {\cal O}[-S_{10}]$ over $S_1$;
\item A real structure on the line bundle ${\cal
    O}(2k+j-1)[-S_{01}]\otimes L^{\mu_1}|S_1$, lifting the real involution
  $\tau$ on $T\bbp^1$.
\end{itemize}
The structure of this data can be understood in terms of the Schubert
structure of $G/B_0$. The sheaf of principal parts, by definition,
lives in the complement of $U_0$, where the two flags cease to be
transverse. In the case that concerns us here (remember the
invariance under $W$), there are essentially two codimension one
varieties at infinity to $U_0$ that we consider, whose pull-backs give
the two spectral curves. The decomposition of ${S_0\cap S_1}$ into
two pieces is simply a pull-back of the Schubert structure from
$G/B_0$. For the line bundles, the basic idea is that in codimension
one, the principal parts can be understood using embeddings of
$\bbp^1$ into $G/B_0$ given by   principal $Sl(2,\bbc)$s in $G$
(see for instance \cite[Sec. 4]{BHMM}),
reducing the question at least locally to understanding principal
parts for maps into $\bbp^1$, a  classical subject. For simple
poles, the principal part of a map into $\bbp^1$ is encoded by its
residue. Here, the poles are over the spectral curves, and
globally, the principal part is then encoded as a
section of a line bundle over each curve; we identify these below. A
way of seeing that the
spectral data splits into components localised over each curve   is
that we can choose appropriately two parabolic subgroups $Par_0$,
$Par_1$ and project from $G/B_0$ to $G/Par_0$ and $G/Par_1$.

To see what the ``residues'' correspond to here, we exploit a description of
the bundle first given in  \cite[Prop 1.12]{hurtubiseMurray},
the short exact sequence
\begin{equation}\label{description-of-F}
\begin{diagram}
0&\rTo&F&\rTo&
\begin{matrix}\vdots\\ F/(F^0_{p-1,1}+F^\infty_{-p+1,0}) \\ \oplus\\
F/(F^0_{p,0}+F^\infty_{-p,1}) \\ \oplus\\ F/(F^0_{p,1}+F^\infty_{-p,0}) \\ \vdots
\end{matrix}
&\rTo &
\begin{matrix}
\vdots\\ \oplus\\ F/(F^0_{p,0}+F^\infty_{-(p-1),0})\\
\oplus\\ F/(F^0_{p,1}+F^\infty_{-p,1})\\ \oplus\\ \vdots
\end{matrix}
&\rTo &0.
\end{diagram}
\end{equation}
The quotients at the right  are supported over the spectral curves
($F/(F^0_{p,0}+F^\infty_{-(p-1),0})$ over $S_0$, and
$F/(F^0_{p,1}+F^\infty_{-p,1})$ over $S_1$)
and those in the middle are generically line
bundles.

The shift operator $W$ moves the quotients in the middle and
last columns two steps down. The quotients are in fact direct image
sheaves $R^1\tilde\pi_*$ of sheaves on $\tilde{\cal T}$ derived from
$E$; for example, let $E_{p,0, -(p-1),\infty}$ be the sheaf of
sections of $E$ over ${\cal T}$ with poles of order $p$ at ${\cal
  T}_0$ and a zero of order $p-1$ at $\mathcal{T}_\infty$. The sheaf
$F/(F^0_{p,0}+F^\infty_{-p,1})$ can be identified with
$R^1\tilde\pi_*(E_{p,0, -(p-1),\infty})$. One can make similar
identifications for the other sheaves. In a way that
parallels  \cite{hitchinMonopoles}, we have the following result.

\begin{proposition}\label{vanishing-theorem}
a) The spectral curves $S_0$ and $S_1$ are real ($\tau$-invariant);
the quotient $F/(F^0_{p,1}+F^\infty_{-p,1})$ over $S_1$ inherits from
$E$ a quaternionic structure, lifting $\tau$.

b) The sheaves $F/(F^0_{p,0}+F^\infty_{-(p-1),0})$ and
$F/(F^0_{p,1}+F^\infty_{-p,1})$  satisfy a vanishing theorem:
\begin{align*}
H^0\Bigl(T\bbp^1, F/(F^0_{p,0}+F^\infty_{-(p-1),0}) \otimes L^{-z}(-2)\Bigr) =
0&, \text{ for } z\in [(p-1)\mu_0+\mu_1, p\mu_0-\mu_1],\\
H^0\Bigl(T\bbp^1, F/(F^0_{p,1}+F^\infty_{-p,1}) \otimes L^{-z}(-2)\Bigr) = 0&,
\text{ for } z\in (p\mu_0-\mu_1, p\mu_0+\mu_1).
\end{align*}
\end{proposition}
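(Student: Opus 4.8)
The plan is to interpret both statements as cohomology vanishing on the twistor space $\tilde{\cal T}$ and to deduce them from the positivity of the Dirac operator, exactly as in Hitchin's treatment of monopole spectral curves. For part (a), the reality of the curves $S_0\in|{\cal O}(2k)|$ and $S_1\in|{\cal O}(2k+2j)|$ follows from the fact, recalled in the previous section, that the real structure $\tau$ acts on $E$ by $\tau^*E\simeq E^*$ and exchanges the line subbundles $E^0_1=L^{-\mu_1}(-j)$ over ${\cal T}^0$ with the annihilator of $E^\infty_1=L^{\mu_1}(-j)$ over ${\cal T}^\infty$; hence $\tau$ carries the flag $F^0_{p,q}$ to the flag $F^\infty_{\cdot,\cdot}$ with the appropriate index shift, so the loci where they fail to be transverse — namely $S_0$ and $S_1$ — are $\tau$-invariant. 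On $S_1$, pairing the identification $\tau^*E\simeq E^*$ with the self-duality of the relevant index shift (the quotient $F/(F^0_{p,1}+F^\infty_{-p,1})$ is invariant under $\tau$ followed by the shift $W$, which identifies it with its own $\tau$-conjugate dual) produces a $\tau$-antilinear endomorphism squaring to $-1$, i.e. a quaternionic structure; I would verify the sign of the square using the quaternionic (rather than real) nature of the caloron bundle $V$, which is where the $-1$ as opposed to $+1$ comes from, just as for $SU(2)$ monopoles.

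For part (b), the key is the identification, already noted in the excerpt, of these quotient sheaves with direct images: $F/(F^0_{p,1}+F^\infty_{-p,1})\simeq R^1\tilde\pi_*(E_{p,1,-p,\infty})$ for a suitable sheaf of sections of $E$ on $\tilde{\cal T}$ with prescribed pole/zero orders along ${\cal T}^0,{\cal T}^\infty$, and similarly for $F/(F^0_{p,0}+F^\infty_{-(p-1),0})$ over $S_0$. Tensoring with $L^{-z}(-2)$ and pushing forward, the Leray spectral sequence reduces $H^0(T\bbp^1,\,F/(\cdots)\otimes L^{-z}(-2))$ to $H^1(\tilde{\cal T},\,{\cal E}\otimes L^{-z}(-2))$ for the appropriate sheaf ${\cal E}$ on $\tilde{\cal T}$ (using that $\tilde\pi_*$ of the relevant sheaf vanishes for degree reasons, so $R^1\tilde\pi_*$ computes $H^0$ on $T\bbp^1$ as $H^1$ upstairs). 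By the standard twistor dictionary, this $H^1$ is identified with the kernel (or cokernel) of the Dirac operator $D_z$ coupled to $V$ over $\SR$ — precisely the operator Nye and Singer analyze. The Weitzenböck formula $D_zD_z^*=\nabla^*\nabla+\rho(F^-_{A_z})$ quoted earlier, together with self-duality $F^-_{A}=0$, shows $D_z^*$ is injective; the complementary statement that $D_z$ has the claimed index (and in the stated $z$-ranges, no kernel beyond what the index forces, so that the relevant cohomology group is zero) is exactly what gives the vanishing. The ranges $z\in[(p-1)\mu_0+\mu_1,\,p\mu_0-\mu_1]$ and $z\in(p\mu_0-\mu_1,\,p\mu_0+\mu_1)$ are the translates by $p\mu_0$ of the two basic intervals on which the index of $D_z$ is respectively $k$ and $k+j$, and the pole/zero orders $p$ in the definition of ${\cal E}$ are precisely what shifts the relevant spectral interval by $p\mu_0$; the closed versus open nature of the intervals reflects whether $z$ is allowed to hit the jumping points $\pm\mu_1+n\mu_0$.

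I would organize the argument as: (1) set up the $\tau$-action on the flags and conclude reality of $S_0,S_1$ and the quaternionic structure on the $S_1$-quotient; (2) write out the direct-image identifications $F/(F^0_{p,\epsilon}+F^\infty_{-p+\epsilon',\epsilon})\simeq R^1\tilde\pi_*({\cal E}_{p,\epsilon})$, being careful about the index bookkeeping forced by the shift operator $W$; (3) run Leray to pass from $H^0$ on $T\bbp^1$ to $H^1$ on $\tilde{\cal T}$; (4) invoke the twistor correspondence to rewrite that $H^1$ as $\ker D_z^*$ (or $\ker D_z$) for the caloron, and quote the Weitzenböck/injectivity statement together with Nye--Singer's index computation to get the vanishing in the stated $z$-ranges. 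The main obstacle I anticipate is step (2)–(4): making the bookkeeping precise enough that the pole orders $p$, the twist $L^{-z}$, the $(-2)$ (which accounts for the canonical bundle of $T\bbp^1$, i.e. $K_{T\bbp^1}\simeq{\cal O}(-4)$ and the relative dualizing sheaf of $\tilde\pi$), and the endpoints of the intervals all line up correctly, and checking that the boundary conditions on $E$ at ${\cal T}^0,{\cal T}^\infty$ are compatible with the $L^2$-conditions defining $D_z$ so that the twistor $H^1$ really is the analytic kernel rather than some nearby space. Once the translation is pinned down, the vanishing itself is immediate from self-duality.
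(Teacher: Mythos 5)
Your proposal follows essentially the same route as the paper, which simply cites Hitchin (Thm 3.7 of \cite{hitchinMonopoles}) and Hurtubise--Murray (Thm 1.17 of \cite{hurtubiseMurray}) with the one-line justification that these cohomology groups encode $L^2$ solutions to the coupled Laplace equation in the caloron background, forced to vanish by positivity; your Leray/twistor/Weitzenb\"ock chain is exactly the mechanism behind that citation, and your treatment of part (a) matches the way the paper sets up the $\tau$-action on $E$ and the flags. The one loose end you should pin down is the ``kernel (or cokernel)'' hedge in your step (4): the twistor $H^1$ must be matched with $L^2$ solutions of the shifted Laplacian $\nabla_z^*\nabla_z = D_zD_z^*$, i.e.\ with $\ker D_z^*$, which vanishes by Weitzenb\"ock and self-duality --- not with $\ker D_z$, which has dimension $k$ or $k+j$ and would destroy the argument if accidentally substituted.
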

The vanishing hold because these cohomology groups encode $L^2$
solutions to the Laplace equation in the caloron background, which
must be zero; see \cite[Thm 3.7]{hitchinMonopoles} and
\cite[Thm 1.17]{hurtubiseMurray}.

For generic spectral curves, following a line of argument of
\cite{hurtubiseMurray}, Garland and Murray show directly that the
spectral data determines the caloron. To this end, they identify in
\cite[Sec. 6]{garland-murray} the
quotients
\begin{align*}
{F}/{(F^0_{p,0}+F^\infty_{-(p-1),0})} &=
L^{p\mu_0-\mu_1}(2k+j)[-S_{10}]|_{S_0} =
L^{(p-1)\mu_0+\mu_1}(2k+j)[-S_{01}]|_{S_0},\\
F/(F^0_{p,1}+F^\infty_{-p,1}) &=
L^{p\mu_0+\mu_1}(2k+j)[-S_{01}]|_{S_1} =
L^{p\mu_0-\mu_1}(2k+j)[-S_{10}]|_{S_1},\\
F/(F^0_{p,0}+F^\infty_{-p,1}) &=  L^{p\mu_0-\mu_1}(2k+j)\otimes{\cal I}_{S_{10}}, \\
F/(F^0_{p,1}+F^\infty_{-p,0}) &=  L^{p\mu_0+\mu_1}(2k+j)\otimes{\cal I}_{S_{01}}. \\
\end{align*}

These identifications  realised in exact sequence (\ref{description-of-F})
give
\begin{equation}\label{sequence}\begin{diagram}
F&\rInto&\begin{matrix}
\vdots\\ L^{(p-1)\mu_0+\mu_1}(2k+j)\otimes{\cal I}_{S_{01}}\\ \oplus\\
L^{p\mu_0-\mu_1}(2k+j)\otimes{\cal I}_{S_{10}}\\ \oplus\\
L^{p\mu_0+\mu_1}(2k+j)\otimes{\cal I}_{S_{01}}\\ \vdots\end{matrix}
&\rTo&
\begin{matrix}
\vdots\\ L^{(p-1)\mu_0+\mu_1}(2k+j)[-S_{01}]|_{S_0}\\
=L^{p\mu_0-\mu_1}(2k+j)[-S_{10}]|_{S_0}\\ \oplus\\
L^{p\mu_0+\mu_1}(2k+j)[-S_{01}]|_{S_1}\\ = L^{p\mu_0-\mu_1}(2k+j)[-S_{10}]|_{S_1}\\
\vdots\end{matrix}&\rTo&0.
\end{diagram}\end{equation}

The ``residues'' are then the sheaves on the right, supported over the
spectral curves; the maps from the middle to the right hand sheaves
give the various isomorphisms. This diagram shows that
 $F$, and hence the caloron, is encoded in the spectral data.

More generally,  we can define the {\it spectral data}
composed of the curves $S_0, S_1$, the pull-backs to $T\bbp^1$ of the
Schubert varieties in $G/B_0$, and the sheaves on the right hand side
of Sequence (\ref{description-of-F}), with isomorphisms similar to the ones for
generic spectral data given by maps from the sheaves in the middle
column.

To summarise, we have that the caloron determines principal part data, a
section of a sheaf of principal parts supported over the spectral
curve; this data determines the caloron in turn. In the generic case,
the principal part data is equivalent to spectral data can be
described in terms of two curves, their
intersections, and sections of line bundles over these curves. We note
that these generic calorons exist; indeed, we already know that a
caloron is determined by a solution to Nahm's equations, and it is
easy to see that generic solutions to Nahm's equations exist, as we
shall see in Section \ref{moduli}.


\section{From Nahm's equations to spectral data, and back}
\label{sec:NahmSpectralData}
\subsection{Flows of sheaves.}
The solutions to Nahm's equations we consider also
determine equivalent spectral data, as we shall see.
To begin, note that by setting $A(\zeta,z)$ as in Equation
(\ref{eqn:A}), and
\[A_+(\zeta,z) = -iT_3(z) + (T_1-iT_2)(z)\zeta,\]
Nahm's equations (\ref{eqn:Nahm}) are equivalent to the Lax form
\begin{equation}\label{eqn:Lax}
\frac{dA}{dz} + [A_+,A] = 0.
\end{equation}
The evolution of $A$ is by conjugation, so the spectral curve given by
\begin{equation}
det (A(\zeta,z)-\eta\bbi)=0
\end{equation}
in $T\bbp^1$ is an obvious
invariant of the flow we have; if the
matrices are $k\times k$, the curve is a $k$-fold branched cover of
$\bbp^1$. We define for each $z$ a sheaf ${\cal L}_z$
over the curve via the exact sequence
\begin{equation}\label{basic-sequence}
\begin{diagram}
0\rightarrow \OO(-2)^k &\rTo^{A(\zeta,z)-\eta\bbi}& {\cal
  O}^k\rightarrow {\cal L}_z\rightarrow 0.\end{diagram}
\end{equation}
This correspondence taking a solution to Nahm's equations to a
curve and a flow of line bundles over the curve is fundamental to the
theory of Lax equations; see
\cite{Adams-Harnad-Hurtubise}. In our case, we have an equivalence:
\begin{proposition}\label{Nahm-on-interval}
There is an equivalence between

A) Solutions to Nahm's equations on an interval $(a,b)$, given by
$k\times k$ matrices with reality condition  built from skew-hermitian matrices $T_i$
as in Equation (\ref{eqn:A}).

B) Spectral curves $S$ that are compact and lie in the linear system $|\OO(2k)|$
lying in $T\bbp^1$, and flows ${\cal L}_z$, for $z\in (a,b)$, of sheaves
supported on $S$, such that
\begin{itemize}
\item $H^0(T\bbp^1, {\cal L}_z(-1)) = H^1(T\bbp^1, {\cal L}_z(-1)) = 0;$
\item ${\cal L}_z = {\cal L}_{z'}\otimes L^{z-z'}$, for $z,z'\in (a,b);$
\item the curve $S$ is real, that is, invariant under the
  antiholomorphic involution $\tau(\eta,\zeta) =
  (-\bar\eta/\bar\zeta^2, -1/\bar\zeta)$ corresponding to reversal of
  lines in $T\bbp^1$;
\item  there is
  a linear form $\mu$ on $H^0(S,{\cal L}_z\otimes\tau^*({\cal L}_z))$
  vanishing on $H^0(S,{\cal L}_z\otimes\tau^*({\cal L}_z)(-C))$ for all
  fibres $C$ of $T\bbp^1\rightarrow \bbp^1$ and inducing a
  positive definite hermitian metric
$(\sigma_1, \sigma_2)\mapsto \mu(\sigma_1\tau^*(\sigma_2))$ on $H^0(S\cap C,{\cal L}_z)$.
\end{itemize}
\end{proposition}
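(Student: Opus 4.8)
The plan is to exhibit the two maps of the asserted equivalence and verify they are mutually inverse, modelling the argument on Hitchin's treatment of $SU(2)$ monopoles \cite{hitchinMonopoles} and on the general linearization of Lax flows in \cite{Adams-Harnad-Hurtubise}. The map A$\to$B is the direct spectral transform: having rewritten Nahm's equations in the Lax form (\ref{eqn:Lax}), the characteristic polynomial $\det(A(\zeta,z)-\eta\bbi)$ is conjugation-invariant along the flow, so the curve $S$ is independent of $z$; since $A(\zeta,z)$ is quadratic in $\zeta$ and $k\times k$ in size, $S$ lies in $|\OO(2k)|$ on $T\bbp^1$ and is a $k$-sheeted branched cover of $\bbp^1$. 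The cokernel sheaf ${\cal L}_z$ of (\ref{basic-sequence}) is supported on $S$, and a standard cohomology computation from the two-term resolution (\ref{basic-sequence}) twisted by $\OO(-1)$ gives $H^0(T\bbp^1,{\cal L}_z(-1)) = H^1(T\bbp^1,{\cal L}_z(-1)) = 0$, which is to say the direct image of ${\cal L}_z$ down to $\bbp^1$ is the trivial bundle $\OO^k$. The flow relation ${\cal L}_z = {\cal L}_{z'}\otimes L^{z-z'}$ is exactly the assertion that the Lax flow linearizes on $S$ by tensoring with the line bundles $L^t$ (transition $\exp(t\eta/\zeta)$), which is the content of \cite{Adams-Harnad-Hurtubise}. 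Reality of $S$ under $\tau$ is immediate from skew-Hermiticity of the $T_i$, and the linear form $\mu$ together with the positive-definite Hermitian pairing it induces is obtained from the standard Hermitian inner product on $\bbc^k$ transported to $H^0(S\cap C,{\cal L}_z)$.

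For the map B$\to$A one reconstructs the matrices from the geometry. The vanishing hypothesis forces the direct image of ${\cal L}_z$ down to $\bbp^1$ to be the trivial bundle $\OO^k$ (the only rank-$k$ bundle on $\bbp^1$ whose twist by $\OO(-1)$ has vanishing cohomology), so $H^0(S,{\cal L}_z)$ is a canonically $k$-dimensional vector space; multiplication by $\eta$, viewed as the tautological section of the pullback of $\OO(2)$ from $\bbp^1$, defines a map ${\cal L}_z \to {\cal L}_z\otimes\OO(2)$ and hence an endomorphism of $H^0(S,{\cal L}_z)$ with coefficients in $H^0(\bbp^1,\OO(2))$, that is, an $\OO(2)$-valued endomorphism $A(\zeta,z)$, quadratic in $\zeta$, whose characteristic polynomial cuts out $S$ again. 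Its coefficients are by construction $T_1+iT_2$, $-2iT_3$ and $T_1-iT_2$; the $\tau$-invariance of $S$ together with the positive-definite Hermitian form built from $\mu$ supplies an identification $H^0(S,{\cal L}_z)\cong\bbc^k$ compatible with a Hermitian metric, and this is precisely what forces the $T_i$ to be skew-Hermitian. Differentiating the flow ${\cal L}_z = {\cal L}_0\otimes L^z$ and keeping the trivialization unitary produces a connection term $T_0$, determined up to $U(k)$-gauge, and yields $\frac{dA}{dz}+[A_+,A]=0$ with $A_+ = -iT_3+(T_1-iT_2)\zeta$; by the equivalence of (\ref{eqn:Lax}) with (\ref{eqn:Nahm}), the $T_i$ solve Nahm's equations on $(a,b)$.

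It then remains to check the two constructions are inverse. Going A$\to$B$\to$A, the reconstructed $A(\zeta,z)$ has the same spectral curve and the same sheaf ${\cal L}_z$ as the original pair, hence agrees with it up to conjugation; the Hermitian normalization fixes the conjugating matrix up to the residual $U(k)$-gauge, which is precisely the freedom in $T_0$, so one recovers the original Nahm data. The composition B$\to$A$\to$B is tautological from the defining sequence (\ref{basic-sequence}).

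The principal difficulty lies in the reality bookkeeping: translating, coherently in $z$, between the holomorphic datum (the $\tau$-invariance of $S$ and the linear form $\mu$ inducing a positive-definite Hermitian metric on $H^0(S\cap C,{\cal L}_z)$) and the requirement that the recovered matrices $T_i$ be genuinely skew-Hermitian, while simultaneously matching the gauge freedom in $T_0$ with the freedom in trivializing the flow of direct images. A secondary point needing care is that $S$ may be singular or reducible, so ${\cal L}_z$ must be allowed to be a rank-one torsion-free sheaf rather than a line bundle; one must check the reconstruction still yields smooth matrices on all of $(a,b)$, and this is exactly what the vanishing $H^0({\cal L}_z(-1)) = H^1({\cal L}_z(-1)) = 0$ guarantees at every $z$.
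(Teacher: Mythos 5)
Your proposal is correct and takes essentially the same route as the paper: both directions are the Hitchin spectral-curve dictionary, with the flow-linearization cited to \cite{Adams-Harnad-Hurtubise} and the reality/skew-Hermiticity handled exactly as in \cite{hitchinMonopoles}. The only (minor) divergence is in B$\to$A, where you recover $A(\zeta,z)$ from the direct image $p_*\mathcal{L}_z$ on $\mathbb{P}^1$ with $\eta$-multiplication — Hitchin's original presentation — while the paper instead packages the same step via the fibre-product sequence (\ref{pushdown-sequence}), which it itself describes as merely ``a more invariant construction of $A$'' and credits to Hitchin; do note, though, that the cohomology vanishing $H^0(\mathcal{L}_z(-1))=H^1(\mathcal{L}_z(-1))=0$ on the non-compact surface $T\mathbb{P}^1$ is less automatic than ``a standard cohomology computation'' suggests, and the paper spells out why multiplication by $A(\zeta,z)-\eta\mathbf{I}$ induces an isomorphism on the infinite-dimensional $H^1$'s (your reformulation via $p_*\mathcal{L}_z\simeq\mathcal{O}^k$ is the clean way to see it, and could be made explicit).
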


We start with the passage from A) to B).
For $H^0(T\bbp^1, {\cal L}_z(-1)) = H^1(T\bbp^1, {\cal L}_z(-1)) = 0$ to hold,
we need
$\OO(-3)^k\rTo^{A(\zeta,z)-\eta\bbi}\OO(-1)^k$ to induce an isomorphism
on $H^1$. As shown in \cite[Lemma 1.2]{hurtubiseMurray}, the groups $H^1(T\PP^1,\OO(p))$
are infinite dimensional spaces nicely filtered by
finite dimensional pieces, corresponding to powers of $\eta$ in the
cocycle.
A basis for $H^1(T\bbp^1, {\cal O}(-3))$ is
$1/\zeta,1/{\zeta^2},
{\eta}/{\zeta}, \ldots, {\eta}/{\zeta^4},{\eta^2}/{\zeta},
\ldots,{\eta^2}/{\zeta^6},\ldots$,
and a basis for $H^1(T\bbp^1, {\cal O}(-1) )$ is obtained from it
by multiplying by $\eta$.
Multiplication by $\eta$ then induces an
isomorphism, and so multiplication by $A(\zeta,z)-\eta\bbi$ gives an
isomorphism $H^1(T\bbp^1, \OO(-3)^k)\rightarrow H^1(T\bbp^1, \OO(-1)^k )$.
For the relation ${\cal L}_z = {\cal L}_{z'}\otimes L^{z-z'}$, see
\cite[Sec. 2.6]{hurtubiseMurray} or \cite{Adams-Harnad-Hurtubise} for the general
 theory of the Lax flows. The reality of
the curve follows from the fact that the $T_i$ are skew hermitian. For
the final property, the positive definite inner product on ${\cal
  O}^k$ with respect to which the $T_i$ are skew hermitian induces one
on $H^0(T\bbp^1, {\cal L}_z)$, by passing to global sections in Sequence
(\ref{basic-sequence}). The inner product is a linear map $H^0(S,{\cal
  L}_z)\otimes H^0(S,\tau^*({\cal L}_z))\to \C$. As ${\cal L}_z$ represents,
in essence, the dual to the eigenspaces of the $A(\zeta)^T$, this linear map
factors through $H^0(S,{\cal L}_z \otimes
\tau^*({\cal L}_z))$, and it must be zero on sections that
vanish on fibres.

Now that the passage from A) to B) is established, note that
since $\tau^*L^{z-z'} = L^{z'-z}$, the product ${\cal L}_z\otimes\tau^*({\cal L}_z)$
is constant.  The last condition of B) imposes
severe constraints on ${\cal L}_z$: when
$S$ is smooth, and ${\cal L}_z$ a line bundle, it tells us that
\begin{equation}{\cal L}_z\otimes\tau^*({\cal L}_z)\simeq K_S(2C).
\end{equation}
Indeed, in that case
the vanishing
of the  cohomology of ${\cal L}_z(-1)$ tells us  $\deg({\cal L}_z)=g+k-1$
and $\deg({\cal L}_z \otimes \tau^*({\cal L}_z))=2g+2k-2$. The space of sections
$H^0(S,{\cal L}_z \otimes \tau^*({\cal L}_z))$ is of dimension $g+2k-1$, while
$H^0(S,{\cal L}_z \otimes \tau^*({\cal L}_z)(-C))$ and $H^0(S,{\cal L}_z
\otimes \tau^*({\cal L}_z)(-C'))$ are of dimension $g+k-1$, and
$\dim H^0(S,{\cal  L}_z \otimes \tau^*({\cal L}_z)(-C-C'))=g-1$,
unless it is the canonical bundle, in which case the dimension
is $g$. There cannot be a non-zero form on $H^0(S,{\cal
  L}_z \otimes \tau^*({\cal L}_z))$ vanishing on
$H^0(S,{\cal L}_z \otimes \tau^*({\cal L}_z)(-C))+ H^0(S,{\cal L}_z
\otimes \tau^*({\cal L}_z)(-C'))$ unless  the
intersection $H^0(S,{\cal L}_z \otimes \tau^*({\cal L}_z)(-C-C'))$ is
of dimension $g$, in which case ${\cal L}_z \otimes \tau^*({\cal
  L}_z)(-C-C')$ is the canonical bundle.

Now we deal with the passage B) to A). In essence, it has been
treated by Hitchin in \cite{hitchinMonopoles}, but we give a more invariant
construction of $A$ suitable for our purposes. The fibre product $FP$
of $T\bbp^1$ with itself over $\bbp^1$ is the vector bundle ${\cal
  O}(2)\oplus {\cal O}(2)$, with fibre coordinates $\eta, \eta'$, and
projections $\pi, \pi'$ to $T\bbp^1$; the lift  of
${\cal O}(2)$ to this vector bundle has global sections
$1, \zeta, \zeta^2, \eta, \eta'$, and the diagonal $\Delta$ is cut out
by $\eta-\eta'$. Hence over $FP$ we have the exact sequence
\begin{equation}\label{fibre-sequence}\begin{diagram}
0\rightarrow \OO(-2)&\rTo^{\eta-\eta'}&\OO\rightarrow \OO_\Delta\rightarrow 0.
\end{diagram}\end{equation}
Lifting ${\cal L}_z$ via $\pi$, tensoring it with this sequence, and
pushing it down via $\pi'$ gives
\begin{equation}\label{pushdown-sequence}\begin{diagram}
0\rightarrow V(-2)&\rTo^{\underline{A}(\zeta)-\eta'}&V\rightarrow {\cal L}_z\rightarrow 0
\end{diagram}\end{equation}
for some rank $k$ vector bundle $V$, which over $(\zeta, \eta)
=(\zeta_0,\eta_0)$ is the space of sections of ${\cal L}$ in the fibre
of $T\bbp^1$ over $\zeta=\zeta_0$, and so is independent of
$\eta$: $V$ is lifted from $\bbp^1$. We then identify $V$. The fibre
product $FP = {\cal  O}(2)\oplus {\cal O}(2)$ lives in the product
$T\bbp^1\times
T\bbp^1$, and is cut out by $\zeta-\zeta'$, and one has the exact
sequence
\begin{equation}\label{product-sequence}\begin{diagram}
0\rightarrow \OO(-1,-1)&\rTo^{\zeta-\zeta'}&\OO\rightarrow \OO_{FP}\rightarrow 0.
\end{diagram}\end{equation}
Lifting ${\cal L}_z$ to $T\bbp^1\times T\bbp^1$,
tensoring it with this sequence and taking direct image on the other
factor gives the long exact sequence
\[0\rightarrow H^0(T\bbp^1, {\cal L}_z(-1))\otimes {\cal O}(-1)
\rightarrow  H^0(T\bbp^1, {\cal L}_z)\otimes {\cal O}\rightarrow V
\rightarrow H^1(T\bbp^1, {\cal L}_z(-1))\otimes {\cal O}(-1)\cdots.\]

The vanishing of the cohomolology of ${\cal L}_z(-1)$ shows that
$V\simeq{\cal O}^k$.

We then have given $\underline{A}(\zeta,z)$ as a map of bundles; the process of turning it
into a matrix valued function $A(\zeta,z)$, and of showing that it evolves
according to Nahm's equations as one tensors it by $L^t$,  is given in
\cite[Prop. 4.16]{hitchinMonopoles}. Similarly, the definition of an
inner product on
$H^0(T\bbp^1, {\cal L}_z)$ and the proof that the $T_i$ are skew
adjoint with respect to the inner product follow the line given in
\cite[Sec. 6]{hitchinMonopoles}.

\subsection{Boundary conditions.}\label{sec:boundary-conditions}
Now  suppose that we have a solution to Nahm's equations
satisfying the boundary conditions for an $SU(2)$ caloron, as given
above. We shall show that the boundary behaviour allows us to
identify the initial conditions for the flow, in other words to say
what the sheaves ${\cal L}_z$ are. As  there are two intervals, there
are two spectral curves, $S_0$, which is a $k$-fold cover of $\bbp^1$,
and $S_1$, which is $k+j$-fold. We  suppose the curves are
generic, in that they have no common components and no multiple
components. A solution to Nahm's equations is called generic
if its curves are.
\begin{proposition}\label{nahm-boundary}
There is an equivalence between

A) Generic solutions $A(\zeta, z)$  to Nahm's equations over the
circle, 
denoted $A^0(\zeta, z)$ on
$(\mu_1,-\mu_1+\mu_0)$ and $A^1(\zeta, z)$ on $(-\mu_1,\mu_1)$,
satisfying the  conditions of Section \ref{sec:gaugefields}-B.

B) Generic spectral curves $S_0$ in $T\bbp^1$ of degree $k$ over
$\bbp^1$ that are the support of sheaves ${\cal L}^0_z$,
$z\in [\mu_1,\mu_0-\mu_1]$,
and spectral curves $S_1$ in $T\bbp^1$ of degree $k+j$
over $\bbp^1$ that are the support of sheaves ${\cal L}^1_z$,
$z\in [-\mu_1, \mu_1]$,
with the following properties:
\begin{itemize}
\item
${\cal L}^0_z$, for
$z\in [\mu_1, \mu_0-\mu_1]$, 
${\cal L}^1_z$, for
$z\in(-\mu_1, \mu_1)$,
$S_0$, and $S_1$ have the properties of Proposition \ref{Nahm-on-interval}.
\item
The intersection $S_0\cap S_1$ decomposes as a sum of two divisors
$S_{01}$ and $S_{10}$, interchanged by $\tau$.
\item
At $-\mu_1 =\mu_0-\mu_1$ (on the circle),
${\cal L}^0_{-\mu_1} = {\cal O}(2k+j-1)[-S_{10}]|_{S_0}$
and ${\cal L}^1_{-\mu_1} = {\OO}(2k+j-1)[-S_{10}]|_{S_1}$.
\item
At $\mu_1$, ${\cal L}^0_{\mu_1} = {\cal O}(2k+j-1)[-S_{01}]|_{S_0}$ and
${\cal L}^1_{\mu_1} = {\cal O}(2k+j-1)[-S_{01}]|_{S_1}$.
\item
There is a real structure on ${\cal L}^1_{0}$ lifting $\tau$.
\end{itemize}
\end{proposition}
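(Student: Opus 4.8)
The plan is to reduce everything to the two open intervals, where Proposition \ref{Nahm-on-interval} already supplies the curves and the flows of sheaves, and then to read off the gluing data at the two common endpoints $\pm\mu_1$ from the local structure there. What then remains is to recognise that the Nahm boundary conditions of Section \ref{sec:gaugefields}-B and the sheaf-theoretic conditions in (B) describe the same local picture; for a single interval with a boundary at which the matrix size jumps, this is the picture analysed by Hitchin and, for the other classical groups, by Hurtubise--Murray.

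\emph{From (A) to (B).} Restricting $A^0(\zeta,z)$ to $(\mu_1,\mu_0-\mu_1)$ and $A^1(\zeta,z)$ to $(-\mu_1,\mu_1)$, Proposition \ref{Nahm-on-interval} produces the real curves $S_0\in|{\cal O}(2k)|$ and $S_1\in|{\cal O}(2(k+j))|$ together with the flows ${\cal L}^0_z$, ${\cal L}^1_z$ having the cohomology vanishing, the $L^{z-z'}$-translation law and the $\mu$-form; genericity is the hypothesis that $S_0$ and $S_1$ have no common or multiple components. It remains to analyse the endpoints. On the small side --- the interval $(\mu_1,\mu_0-\mu_1)$, and in Case $1$ the rank-$k$ block of the large side --- the $T_i$, hence the defining map $A^0(\zeta,z)-\eta\bbi$ of Sequence (\ref{basic-sequence}), extend continuously across $\pm\mu_1$, so ${\cal L}^0_z$ acquires a well-defined limit ${\cal L}^0_{\pm\mu_1}$ on $S_0$ and the translation law extends. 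On the large side ($j\ne0$) the block form, with $R_i$ having a simple pole whose residue is an irreducible $su(2)$, is exactly the monopole boundary condition; I would import the local analysis of \cite{hitchinMonopoles, hurtubiseMurray} to conclude that, despite the pole in $A^1$, the flow ${\cal L}^1_z$ still extends to a well-defined sheaf ${\cal L}^1_{\pm\mu_1}$ on $S_1$, and that irreducibility of the residue --- as opposed to a reducible $su(2)$ representation --- is precisely what pins down its isomorphism class. In Case $2$ ($j=0$) there is no pole, and the only extra datum is the rank-one jump (\ref{jump}).

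\emph{The gluing.} The decomposition $S_0\cap S_1=S_{01}+S_{10}$ and the precise isomorphism classes come from comparing the two sides at a common endpoint. I would treat $j=0$ first, where it is transparent: writing the jump at $\mu_1$ as $A^0(\zeta,\mu_1)-A^1(\zeta,\mu_1)=v(\zeta)\,w(\zeta)^T$ with $v=\alpha_0+\alpha_1\zeta$ and $w=\bar\alpha_1-\bar\alpha_0\zeta$, the adjugate identity $\det(M-vw^T)=\det M - w^T\mathrm{adj}(M)\,v$ applied to $M=A^0(\zeta,\mu_1)-\eta\bbi$ shows that a point $(\zeta,\eta)$ of $S_0$ lies on $S_1$ exactly when $w(\zeta)^Tu=0$, where $u$ spans the kernel of $M$, or $v(\zeta)^Tu'=0$, where $u'$ spans the kernel of $M^T$; these two conditions cut out $S_{01}$ and $S_{10}$, and since $\tau$ interchanges the eigenvectors of $A^0(\zeta)$ with those of $A^0(\zeta)^T$ and $v$ with $w$, it interchanges the two divisors. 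The same identity controls how the cokernel sheaf changes under the jump, and combined with the degree bookkeeping --- $g_{S_0}=(k-1)^2$, $g_{S_1}=(k+j-1)^2$, $\deg{\cal L}^0_z=k(k-1)$, $\deg{\cal L}^1_z=(k+j)(k+j-1)$, $\deg(S_{01}|_{S_i})=\deg(S_{10}|_{S_i})=k(k+j)$ --- forces ${\cal L}^0_{\pm\mu_1}={\cal O}(2k+j-1)[-S_\bullet]|_{S_0}$ and likewise on $S_1$, with the same twist $2k+j-1$ on both curves and with $S_{01}$ at one endpoint and $S_{10}$ at the other. For $j\ne0$ the part played by the rank-one jump is taken over by the block decomposition together with $\pi\iota=\mathrm{id}$ and the $O(\hat z^{(j-1)/2})$ off-diagonal bound: compatibility of the $\hat T_i$-limit with the small side matches part of the boundary behaviour of $S_1$ with that of $S_0$, while the residue analysis of \cite{hitchinMonopoles, hurtubiseMurray} supplies the rest, and one again reads off the two-piece decomposition of $S_0\cap S_1$ and the twisted restrictions. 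Finally, the symmetry condition $T_i(0)=T_i(0)^T$ together with skew-hermiticity makes the $T_i(0)$ purely imaginary, so $\overline{A^1(\zeta,0)}=-\bar\zeta^2 A^1(-1/\bar\zeta,0)$; this upgrades the reality of the curve to an antilinear structure on ${\cal L}^1_0$ lifting $\tau$ (compare Proposition \ref{vanishing-theorem}).

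\emph{From (B) to (A), and the main obstacle.} Conversely, Proposition \ref{Nahm-on-interval} reconstructs $A^0$, $A^1$ on the open intervals from the curves and the sheaf flows, and it remains to show that the prescribed limits ${\cal L}^0_{\pm\mu_1}$, ${\cal L}^1_{\pm\mu_1}$ force the Nahm boundary conditions. One runs the local analysis in reverse: the limiting sheaf being ${\cal O}(2k+j-1)[-S_\bullet]|_{S_1}$, of the prescribed degree, together with the cohomology vanishing on the open interval, forces a simple pole with irreducible rank-$j$ $su(2)$ residue and the $O(\hat z^{(j-1)/2})$ block form; the agreement of the restricted bundles along $S_{01}$ and $S_{10}$ produces the maps $\iota,\pi$ (or, when $j=0$, the jump vectors $\alpha_0,\alpha_1$); the real structure on ${\cal L}^1_0$ yields the symmetry at $z=0$; and genericity of the curves together with the sheaf conditions forces the irreducibility clause of Section \ref{sec:gaugefields}-B (no covariant-constant invariant sections), placing the output in the class (A). I expect the main obstacle to be exactly this reverse local analysis on the large side: extracting, from the abstract degenerating flow of line bundles, Nahm data with a simple pole whose residue is \emph{irreducible} rather than merely some $su(2)$ representation, and verifying that the two intervals glue compatibly with the reality structures and with the positivity of the $\mu$-form up to the endpoints. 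This amounts to carrying out the Hitchin/Hurtubise--Murray endpoint construction in the presence of the extra $k$-dimensional ``spectator'' block coming from $S_0$, and it is the one place where work beyond bookkeeping is needed.
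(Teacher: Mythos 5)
Your overall strategy matches the paper's: restrict to the two open intervals and apply Proposition \ref{Nahm-on-interval}, then analyse the endpoints via a local normal form of $A^1-\eta\bbi$ near the pole, detect the divisors $S_{01}$, $S_{10}$ in $S_0\cap S_1$ from the factorization of the determinant relation linking $A^0$ and $A^1$, and use the real structure at $z=0$ and skew-Hermiticity to show $\tau$ swaps the two divisors. Your adjugate-identity computation for $j=0$ is in substance the $j=0$ specialization of Equation (\ref{determinant}) in the paper, and your description of the obstacles in the (B)$\to$(A) direction is accurate.

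There is, however, a genuine gap at the central step: identifying ${\cal L}^0_{\mu_1}$ and ${\cal L}^1_{\mu_1}$ as ${\cal O}(2k+j-1)[-S_{01}]$ restricted to $S_0$ and $S_1$. You assert this is ``forced'' by ``degree bookkeeping'' together with a remark that ``the same identity controls how the cokernel sheaf changes under the jump.'' Degree alone cannot determine the isomorphism class: for $k\ge 2$ the curve $S_0$ has genus $(k-1)^2>0$, so its Jacobian is positive-dimensional and there are continuous families of line bundles of the identical degree $k(k-1)$. The adjugate/determinant identity tells you where $S_0$ and $S_1$ meet, but it does not by itself produce a map from a fixed sheaf ${\cal O}(2k+j-1)\otimes{\cal I}_D$ into both cokernel sheaves. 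The paper supplies exactly this missing piece: it compactifies $T\bbp^1$ to $\bbt=\bbp({\cal O}(2)\oplus{\cal O})$, pushes the line bundle ${\cal O}(mC+nP_\infty)$ down through the fibre-product sequence to obtain a resolution by the matrix $S(n,\eta)$, and then constructs an explicit commuting diagram built from $R(\zeta,\eta)$, the vector $(\phi_1,\ldots,\phi_k)=-(-1)^kF(A^0-\eta\bbi)_{adj}$, and the matrix $M_n$ of Equation (\ref{M-matrix}). This diagram realises a rank-one sheaf ${\cal R}$ as ${\cal O}(2k+j-1)\otimes{\cal I}_D$ and simultaneously supplies surjections ${\cal R}\to{\cal L}^0_{\mu_1}$ and ${\cal R}\to{\cal L}^1_{\mu_1}$, which is what actually pins down the isomorphism class (and identifies $D$ as the locus where $F(A^0-\eta\bbi)_{adj}=0$). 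Without some version of this construction your argument is incomplete for $k\ge 2$, and your appeal to Hitchin and Hurtubise--Murray for the $j>0$ boundary analysis, while correctly locating the relevant literature, does not itself close that gap, since the coupling of the pole block to the extra $k$-dimensional block through $F$, $G$ is the new feature here.

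A smaller point: you should also explain why $S_0\cap S_1=S_{01}+S_{10}$ accounts for the \emph{entire} intersection, rather than merely containing the two divisors coming from the two endpoints. The paper gets this from Equation (\ref{determinant}): on $S_0$ the intersection with $S_1$ is cut out by $F(A^0-\eta\bbi)_{adj}G=0$, and since $(A^0-\eta\bbi)_{adj}$ generically has rank one it factors as $UV$, so the equation is $(FU)(VG)=0$, exactly the product of the defining relations of $D$ and $D'$, with the general case following by continuity. Your $j=0$ version of this is on the right track, but the $j>0$ version, where the roles of $v$ and $w$ are played by $F$ and $G$ entering through the block form (\ref{normal-form-schem}), needs to be written out.
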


As a consequence there are isomorphisms of line bundles
$\OO[-S_{01}]\otimes L^{\mu_0-2\mu_1}\simeq \OO[-S_{10}]$
over $S_0$ and
$\OO[-S_{10}]\otimes L^{2\mu_1}\simeq\OO[-S_{01}]$ over $S_1$.

Let us begin with a discussion of the case $k=0$,  studied
by Hitchin in \cite{hitchinMonopoles}. He showed that the solution to Nahm's
equations for an $SU(2)$ monopole corresponded to the flow of line
bundles $L^{t+\mu_1}(j-1)$, $t\in [-\mu_1,\mu_1]$ on the monopole's
spectral curve, with
the flow being regular in the middle of the interval, and having
simple poles at the ends of the interval, with residues giving an
irreducible representation of $SU(2)$.

In the construction given  in Proposition \ref{Nahm-on-interval},
we obtain for this flow a bundle $V$ over $\bbp^1\times [-\mu_1,\mu_1]$, and a
regular section of $Hom(V(-2), V)$ over this interval. For $t\in(-\mu_1,\mu_1)$
the bundle $V$ is trivial on $\bbp^1\times
\{t\}$. The singularity at the end of the interval is caused by a jump
in the holomorphic structure in $V$ at the ends. (Both ends are
identical, because of the identification $\OO = L^{2\mu_1}$ over
the spectral curve.)

To understand the structure of $V$ at ${\cal L}_{-\mu_1}= {\cal O}(j-1)$,
we note that in the natural trivialisations lifted from $\bbp^1$,
local sections of ${\cal O}(j-1)$ over $T\bbp_1$ are filtered by the
order  of vanishing along the zero section: 
\[\OO(j-1)\supset \eta\OO(j-3) \supset \eta^2\OO(j-5)\supset\cdots;\]
this filtration can be turned onto a sum, as we have  the subsheaves
$L_i$ of sections $\eta^i s$, with $s$ lifted from $\bbp^1$. The
construction, by limiting to a curve which is a $j$-sheeted cover of $\bbp^1$,
essentially says that the degrees of vanishing of interest
are at most $j-1$, as one is taking the remainder by division
by the equation of the curve. The bundle $V$ over $\PP^1\times\{\mu_1\}$
decomposes as a sum
\[V ={\cal O}(j-1)\oplus {\cal
  O}(j-3) \oplus {\cal  O}(j-5)\oplus\dots\oplus{\cal O}(-j+1).\]
More generally, for later use, set
\begin{equation*}
 V_{k,k-2\ell}{\buildrel{\rm def}\over{=}}{\cal O}(k)\oplus {\cal
   O}(k-2) \oplus {\cal  O}(k-4)\oplus\dots\oplus{\cal O}(k-2\ell),
\end{equation*}
so $V= V_{j-1,-j+1}$.
Writing $0= \eta^j +\eta^{j-1}p_1(\zeta)+\dots + p_j(\zeta)$
the equation of the spectral curve, with  polynomials $p_i$ of
degree $2i$, we can write the induced automorphism $A(\zeta)-\eta\bbi$
in this decomposition as
\begin{equation*}
\begin{pmatrix}
-\eta& 0 & 0&\dots&0&-p_j(\zeta)\\
1&-\eta& 0&\dots&0&-p_{j-1}(\zeta)\\
0&1&-\eta&\dots&0&-p_{j-2}(\zeta)\\
\vdots&\vdots&\vdots&&\vdots&\vdots\\
0&0&0&\dots&1&-\eta-p_1(\zeta)
\end{pmatrix}
\end{equation*}

There is another way of understanding how the pole of Nahm's
equations generates a non-trivial $V$.
We first restrict to $\zeta = 0$, and vary $z$.
For a moment, suppose by translation that $z=0$ is
the point where the structure jumps. From the boundary conditions,
 the residues at 0 are given in a suitable basis by
\begin{align}\label{residues1}
Res(A_+)&= diag({\frac{-(j-1)}{2}},{\frac{2-(j-1)}{2}},\dots,{\frac{(j-1)}{2}}),\\
Res(A)&= \begin{pmatrix}0&0&\ddots&0&0\\
                        1&0&\ddots&0&0\\
                        0&1&\ddots&0&0\\
                        \vdots&\ddots&\ddots&\ddots&\vdots\\
                        0&0&0&1&0\end{pmatrix}.\label{residues2}
\end{align}
As in \cite[Prop 1.15]{hurtubiseClassification}, we solve
$\frac{ds}{dz} +
A_+s=0$ along $\zeta = 0$, for $s$ of the form
$z^{\frac{j-1}{2}}((1,0,\ldots,0) + z\cdot{\rm holomorphic})$. The
sections $A^j s$ also solve the equation, and, using these sections as a
basis, one conjugates  $A-\eta\bbi$ to
\begin{equation}
\begin{pmatrix}
-\eta& 0 & 0&\dots&0&-p_j(0)\\
1&-\eta& 0&\dots&0&-p_{j-1}(0)\\
0&1&-\eta&\dots&0&-p_{j-2}(0)\\
\vdots&\vdots&\ddots&&\ddots&\vdots\\
0&0&0&\dots&1&-\eta-p_1(0)
\end{pmatrix}
\end{equation}
using a matrix of the form $M(z)=({\rm Holomorphic\ in\ }z) N(z)$, with
\begin{equation}
N(z):=
diag(z^{\frac{-(j-1)}{2}},z^{\frac{2-(j-1)}{2}},\ldots,z^{\frac{(j-1)}{2}}).
\end{equation}
This process can be applied over any point $\zeta$, as the matrices $A(\zeta)$,
$A_+(\zeta)$ have residues (in $z$) conjugate to (\ref{residues1}),
(\ref{residues2}). Indeed, at $\zeta = 0$, they represent standard
generators of a representation of $Sl(2)$, and moving away from $\zeta = 0$
simply amounts to a change of basis. Explicitly, conjugating the residues
of $A(\zeta)$, $A_+(\zeta)$ by $N(\zeta)$ takes them to
the residues of $\zeta A(1)$, $A_+(1)$.
Conjugating again by  a matrix $T$ takes them to the residues of $\zeta A(0), A_+(0)$,
and then by
$N(\zeta)^{-1}$ to the residues of $A(0), A_+(0)$. Thus, for a suitable
\begin{equation}
M( \zeta,z)=({\rm Holomorphic\ in\ }z, \zeta)\cdot N(\zeta^{-1} z)TN(\zeta)
\end{equation}
one has
\begin{equation}
M(\zeta, z)(A(\zeta, z)-\eta\bbi)M(\zeta, z)^{-1} = \begin{pmatrix}
-\eta& 0 & 0&\dots&0&-p_j(\zeta)\\
1&-\eta& 0&\dots&0&-p_{j-1}(\zeta)\\
0&1&-\eta&\dots&0&-p_{j-2}(\zeta)\\
\vdots&\vdots&\vdots&&\vdots&\vdots\\
0&0&0&\dots&1&-\eta-p_1(\zeta)
\end{pmatrix}.
\end{equation}
At $z= 0$, it is a holomorphic section of $Hom(V(-2),V)$ in standard trivialisations.

The same procedures work
when $k\ne 0$.  Indeed, the singular solution to
$\frac{d\\ s}{dz}+A_+s$ has the same behaviour. Starting
from a solution to Nahm's equations, and integrating the connection
as in \cite{hurtubiseClassification}, we find that, near $\mu_1$ on the interval
$(-\mu_1,\mu_1)$, a change of basis of the form
\begin{equation}
M( \zeta,z)=(\text{holomorphic in }z,\zeta)\cdot diag (\bbi_{k\times k},
N(\zeta^{-1} z)TN(\zeta))
\end{equation}
conjugates $A^1(\zeta,z)-\eta\bbi$ to the constant (in $z$) matrix
\begin{equation}\label{normal-form}
 \begin{pmatrix}
a_{11}(\zeta)-\eta& a_{12}(\zeta)&\dots& a_{1k}(\zeta)&0&0&0&\dots&0&g_1(\zeta)\\
a_{21}(\zeta)& a_{22}(\zeta)-\eta&\dots& a_{1k}(\zeta)&0&0&0&\dots&0&g_2(\zeta)\\
\vdots&\vdots&&\vdots&\vdots&\vdots&\vdots&&\vdots&\vdots\\
a_{k1}(\zeta)& a_{k2}(\zeta)&\dots& a_{kk}(\zeta)-\eta&0&0&0&\dots&0&g_k(\zeta)\\
f_1(\zeta)&f_2(\zeta)&\dots &f_k(\zeta)&-\eta& 0 & 0&\dots&0&-p_j(\zeta)\\
0&0&\dots& 0&1&-\eta& 0&\dots&0&-p_{j-1}(\zeta)\\
0&0&\dots& 0&0&1&-\eta&\dots&0&-p_{j-2}(\zeta)\\
\vdots&\vdots&&\vdots&\vdots&\vdots&\vdots&&\vdots&\vdots\\
0&0&\dots& 0&0&0&0&\dots&1&-\eta-p_1(\zeta)
\end{pmatrix}.
\end{equation}
Setting $\mathcal{C}(p(\zeta))$ to denote the companion matrix of $p(\zeta)$, we
write this matrix schematically as
\begin{equation}\label{normal-form-schem}
A^1(\zeta)-\eta\bbi =\begin{pmatrix}
A^0(\zeta)-\eta\bbi&\begin{pmatrix}0& G(\zeta)\end{pmatrix}\\
\begin{pmatrix}F(\zeta)\\ 0\end{pmatrix}&
\mathcal{C}(p(\zeta))-\eta\bbi\end{pmatrix}.
\end{equation}

Let $M_{adj}$ denote the classical adjoint of $M$, so that $M_{adj}M
= \det(M)\bbi$. Then
\begin{equation}\label{determinant}\det(A^1(\zeta)-\eta\bbi) =
  \det(A^0(\zeta)-\eta\bbi) \Bigl(\eta^j+ \sum \eta^{j-i}p_i(\zeta)\Bigr)+ (-1)^j
  F\bigl(A^0(\zeta)-\eta\bbi\bigr)_{adj}G.\end{equation}

The matrix $A^0(\zeta) $ is equal to the limit $A^0(\zeta,\mu_1)$.
 At the boundary point $\mu_1$, the bundle $V^0_{\mu_1}$
 for $S_0$ is trivial, since the solution on $S_0$ is smooth at that
 point; one has
\begin{equation}\label{fibre-sequence2}\begin{diagram}
0\rightarrow {\cal
  O}(-2)^{k}&\rTo^{A^0(\zeta)-\eta\bbi}&
{\cal O}^k\rightarrow {\cal L}^0_{\mu_1}\rightarrow 0.
\end{diagram}\end{equation}

The limit bundle   $V^1_{\mu_1}$ for $S_1$ is
\begin{equation}
V^1_{\mu_1} = {\cal O}^k\oplus V_{j-1,-j+1}
\end{equation} with
\begin{equation}\begin{diagram}
0\rightarrow V^1_{\mu_1}(-2)&\rTo^{A^1(\zeta)-\eta\bbi}&
V^1_{\mu_1}\rightarrow {\cal L}^1_{\mu_1}\rightarrow 0.
\end{diagram}\end{equation}

We now want to identify the limit bundles ${\cal L}^0_{\mu_1}$, ${\cal
  L}^1_{\mu_1}$, and the gluing between them. 
There is a divisor $D$ contained in the intersection $S_0\cap S_1$ such that
${\cal L}^0_{\mu_1}\simeq {\cal O}(2k+j-1)[-D]|_{S_0}$ and  ${\cal
  L}^1_{\mu_1}\simeq {\cal O}(2k+j-1)[-D]|_{S_1}$, and the
correspondence between the matrices is mediated by the maps
 \[{\cal O}(2k+j-1)[-D]|_{S_0}\leftarrow {\cal O}(2k+j-1)\otimes {\cal
   I}_D\rightarrow {\cal O}(2k+j-1)[-D]|_{S_1}.\]
To prove its existence, we
 need to understand how to lift and push down  ${\cal
   O}(2k+j-1)\otimes {\cal I}_D$ (here ${\cal I}_D$ is the sheaf of
 ideals of $D$ on $T\bbp^1$) and so ${\cal O}(2k+j-1)$ through
Sequence (\ref{fibre-sequence}). To do so, we compactify
 $T\bbp^1$ by embedding it into $\bbt = \bbp({\cal O}(2)\oplus{\cal
   O})$, adding a divisor at infinity $P_\infty$. Let $C$ be the
 fibre, then $P_\infty + 2C$ is linearly
 equivalent to the zero section $P_0$ of $T\bbp^1$. Our fibre product
 now compactifies to a $\bbp^1\times\bbp^1$ bundle over
 $\bbp^1$. Similarly, Sequence (\ref{fibre-sequence})
 compactifies to
\begin{equation}\label{fibre-sequence3}\begin{diagram}
0\rightarrow \OO(-2C-P_\infty-P_\infty')&\rTo^{\eta-\eta'}
    &\OO\rightarrow {\cal O}_\Delta\rightarrow 0.
\end{diagram}\end{equation}

The line bundle ${\cal O}(mC+ nP_\infty)$ over
$\bbt$ has over each fibre $C$ a $(n+1)$-dimensional space of
sections, and these sections are graded by the order in $\eta$, as
before. Define an $\ell\times (\ell-1)$ matrix $S(\ell,\eta)$ by
\begin{equation}\label{eqn:defS}
S(\ell,\eta) = \begin{pmatrix}
-\eta& 0 &\cdots &\cdots&0\\
1&-\eta& \ddots&&\vdots\\
0&1&\ddots&\ddots&\vdots\\
\vdots&\ddots&\ddots&-\eta&0\\
\vdots&&\ddots&1&-\eta\\
0&\cdots&\cdots&0&1
\end{pmatrix}.
\end{equation}

\begin{lemma}
Let $m, n>0$. Lifting ${\cal O}(mC+ nP_\infty)$ to the fibre product and
tensoring with Sequence (\ref{fibre-sequence2}),
then pushing down, we obtain an exact sequence
\[\begin{diagram}
0&\rTo &\bigoplus_{i=0}^{n-1}\OO\bigl((m-2i-2)C+(n-i-1)P_\infty\bigr)
&\rTo^{S(n,\eta)}&\bigoplus_{i=0}^{n}\OO\bigl((m-2i)C+(n-i)P_\infty\bigr)\\
&&\phantom{\bigoplus_{i=0}^{n-1}\OO\bigl((m-2i-2)C+(n-i-1)P_\infty\bigr)}&\rTo^{(1,\eta,\eta^2,\ldots,\eta^{n})}& \OO(mC+ nP_\infty)\rightarrow 0.
\end{diagram}\]
Over $T\bbp^1$, it becomes
\begin{equation*}\begin{diagram}
0\rightarrow V_{m-2, m-2n}&\rTo^{S(n,\eta)}
&V_{m, m-2n}&\rTo^{(1,\eta,\eta^2,\ldots,\eta^{n})}&\OO(m)\rightarrow 0.\end{diagram}
\end{equation*}
\end{lemma}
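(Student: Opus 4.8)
The plan is to obtain the sequence as a direct image, through the fibre product $FP$, of the compactified tautological sequence (\ref{fibre-sequence3}) twisted by a pull-back of $\OO(mC+nP_\infty)$. Concretely: lift $\OO(mC+nP_\infty)$ from one copy of $\bbt=\bbp(\OO(2)\oplus\OO)$ to $FP$ via the first projection $\pi$, tensor with (\ref{fibre-sequence3}), and apply $R\pi'_*$ along the second projection $\pi'$. The quotient term $\OO_\Delta$ produces, after twisting and pushing down, the right-hand sheaf $\OO(mC+nP_\infty)$ together with the evaluation map; the middle term $\OO$ produces $V_{m,m-2n}$ (with its compactification to $\bbt$); and the subsheaf $\OO(-2C-P_\infty-P_\infty')$ produces the left-hand sheaf $V_{m-2,m-2n}$ together with the map $S(n,\eta)$, induced by the inclusion which is multiplication by $\eta-\eta'$.

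I would carry this out as follows. First, check that $R^1\pi'_*$ vanishes on the (twisted) subsheaf and middle term, so that the direct image stays a short exact sequence: along each $\bbp^1$-fibre of $\pi'$ these restrict to line bundles of degrees $n$ and $n-1$ respectively, both $\ge -1$ precisely because $n>0$ (this, and nothing else, is where the positivity hypothesis enters; $m$ plays no role here). Second, since $\pi=\pi'$ on $\Delta$ and $\pi'|_\Delta$ is an isomorphism onto $\bbt$, the push-forward of the twisted $\OO_\Delta$ is $\OO(mC+nP_\infty)$ again, and the induced right-hand map, read off in the monomial frame of the middle sheaf, is multiplication by $1,\eta,\dots,\eta^n$ on the successive summands. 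Third, compute $\pi'_*\pi^*\OO(mC+nP_\infty)$: a local section is, along each $\pi'$-fibre, a polynomial of degree $\le n$ in the fibre coordinate $\eta$, and the coefficient of $\eta^i$ transforms as a section of the $i$-th summand listed in the statement; the monomials $1,\eta,\dots,\eta^n$ split the resulting filtration, whence the identification with $V_{m,m-2n}$ (compactified) compatibly with the map already found. The left-hand sheaf is the same computation shifted by one in both gradings with the top monomial dropped, giving $n$ summands; and its map to the middle, induced by multiplication by $\eta-\eta'$, sends the monomial $\eta^i$ to $\eta^{i+1}-\eta'\eta^i$. Since after the $\pi'$ push-forward $\eta'$ has become the fibre coordinate of the target, multiplication by $\eta'$ carries the $i$-th summand onto the $(i-1)$-th, so that in the monomial bases this map is exactly the bidiagonal matrix $S(n,\eta)$. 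Exactness in the middle then follows either from the fibrewise linear algebra ($\im S(n,\eta)=\ker(1,\eta,\dots,\eta^n)$ pointwise, both of rank $n$) plus the rank count, or by restricting to $T\bbp^1$, where exactness of $0\to V_{m-2,m-2n}\xrightarrow{S(n,\eta)}V_{m,m-2n}\xrightarrow{(1,\eta,\dots,\eta^n)}\OO(m)\to0$ is an elementary telescoping identity, and then using that all sheaves in sight are locally free.

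The main obstacle is the bookkeeping of the $P_\infty$-twists on the Hirzebruch surface $\bbt$. The fibre coordinate $\eta$ is a section of $\OO(2C+P_\infty)$ rather than of $\OO(2C)$, so multiplication by $\eta$ (or by $\eta-\eta'$) does not merely shift the $C$-grading; tracking how the compactification $T\bbp^1\subset\bbt$ interacts with these multiplications — both when reading off the graded pieces of the middle and left sheaves and when verifying that $S(n,\eta)$ and $(1,\eta,\dots,\eta^n)$ are genuine morphisms of the line bundles exactly as written, and that exactness persists over all of $\bbt$ rather than only over $T\bbp^1$ — is where all the care goes. The reduction to $T\bbp^1$ is immediate and elementary; the content of the statement is the precise form of the extension across $P_\infty$, which should be matched against the filtration-by-powers-of-$\eta$ description of these direct images, as in \cite[Lemma 1.2]{hurtubiseMurray}.
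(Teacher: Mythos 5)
Your overall strategy---lift via $\pi$, tensor with Sequence~(\ref{fibre-sequence3}), apply $R\pi'_*$, and kill $R^1$ fibrewise using $n>0$---is the natural one, and since the paper offers no proof of its own beyond ``the proof is straightforward,'' there is no competing argument to compare against. Your fibrewise reading of the right-hand map as $(1,\eta,\dots,\eta^n)$, of the inclusion as $S(n,\eta)$, and the telescoping verification of exactness over $T\bbp^1$ are all correct; and the $T\bbp^1$ version of the sequence is the only part of the lemma that the paper actually uses in the commutative diagram that follows it.

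The step that does not hold up is the identification of the middle direct image on the compactification~$\bbt$. You assert that the coefficient of $\eta^i$ in a fibrewise polynomial ``transforms as a section of the $i$-th summand listed in the statement,'' i.e.\ of $\OO((m-2i)C+(n-i)P_\infty)$. But that coefficient is a function of the base variable $\zeta$ alone, hence lifted from $\bbp^1$; equivalently, by flat base change for the Cartesian square $FP=\bbt\times_{\bbp^1}\bbt'$, one has
\[
\pi'_*\pi^*\OO(mC+nP_\infty)\;\cong\; q^*\,(\pi_0)_*\OO(mC+nP_\infty)\;\cong\;\bigoplus_{i=0}^n\OO\bigl((m-2i)C\bigr),
\]
where $q\colon\bbt'\to\bbp^1$ and $\pi_0\colon\bbt\to\bbp^1$ are the projections---with no $P_\infty$-twist at all. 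The two bundles are genuinely non-isomorphic (restrict to a fibre $C$: $\OO^{\,n+1}$ versus $\OO(n)\oplus\cdots\oplus\OO$). The same computation for the subsheaf gives $\bigoplus_{i=0}^{n-1}\OO\bigl((m-2i-2)C-P_\infty'\bigr)$ rather than $\bigoplus_{i=0}^{n-1}\OO\bigl((m-2i-2)C+(n-i-1)P_\infty\bigr)$. In other words the ``$\bbt$''-level line of the lemma as printed already contains a slip in its $P_\infty$-bookkeeping, and your sketch reproduces the slip rather than catching it, precisely in the place you yourself flag as ``where all the care goes.'' This is harmless for the application, since all $P_\infty$-twists vanish upon restriction to $T\bbp^1$, where your argument is sound; but if one wants a literal statement over $\bbt$, the middle and left terms should be the bundles displayed above, with the evaluation map read off componentwise through the natural inclusions $\OO((m-2i)C)\hookrightarrow\OO((m-2i)C+(n-i)P_\infty')$.
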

The proof is straightforward. We now want to define a subsheaf of ${\cal O}(2k+j)$. Set
\begin{equation*}R(\zeta,\eta) = \begin{pmatrix}
A^0(\zeta)-\eta\bbi&0\\
\begin{pmatrix}F\\ 0\end{pmatrix}&
S(j,\eta)\end{pmatrix}
\end{equation*}
We define a vector of polynomial functions in $(\zeta, \eta)$
\[(\phi_1,\ldots,\phi_k)= -(-1)^kF(\zeta) (A^0(\zeta)-\eta\bbi)_{adj}.\]
We have
\begin{equation}\label{phi-equation}
(\phi_1,\ldots,\phi_k)\bigl(A^0(\zeta)-\eta\bbi\bigr) +F(\zeta)(-1)^k
\det\bigl(A^0(\zeta)-\eta\bbi\bigr) = 0.
\end{equation}
Write $\phi_i=\sum_{j=0}^{k-1}\phi_{ji}(\zeta)\eta^j$, and
$(-1)^k\det(A^0(\zeta)-\eta\bbi) = \eta^k +
\sum_{j=0}^{k-1}h_j(\zeta)\eta^j$. Decomposing Equation (\ref{phi-equation})
into different powers of $\eta$, we obtain
\begin{equation}\label{phi-equation-matrix}
-\begin{pmatrix}
0&\cdots&0\\
\phi_{01}&\cdots&\phi_{0k}\\
\vdots&&\vdots\\
\phi_{k-1,1}&\cdots&\phi_{k-1,k}
\end{pmatrix} +\begin{pmatrix}
\phi_{01}&\cdots&\phi_{0k}\\
\vdots&&\vdots\\
\phi_{k-1,1}&\cdots&\phi_{k-1,k}\\
0&\cdots&0
\end{pmatrix}  A^0(\zeta) +
\begin{pmatrix}h_0\\ \vdots \\h_{k-1}\\1\end{pmatrix}F= 0.
\end{equation}
Let $M_n$ be the $(k+n)\times(k+n)$ matrix
\begin{equation}\label{M-matrix}
M_n=
\begin{pmatrix}
\phi_{01}&\dots&\phi_{0k}&h_0&0&0\\
\vdots&&\vdots&\vdots&\ddots&0\\
\phi_{k-1,1}&\dots&\phi_{k-1,k}&h_{k-1}&&h_0\\
0&\cdots&0&1&\ddots&\vdots\\
\vdots&&\vdots&0&\ddots&h_{k-1}\\
0&\cdots&0&0&0&1
\end{pmatrix}.\end{equation}
Using Equation (\ref{phi-equation-matrix}), we have the commuting diagram
\[\begin{diagram}
\OO(-2)^{\oplus k}\oplus V_{j-3, -j+1}&\rTo^{R(\zeta,\eta)}
    &\OO^{\oplus k}\oplus V_{j-1, -j+1} &\rTo&{\cal R} \\
\dTo<{M_{j-1}}&&\dTo<{M_j}&&\dTo\\
V_{2k+j-3, -j+1}&\rTo^{S(k+j,\eta)} &V_{2k+j-1, -j+1} &\rTo^{(1,\eta,\ldots,\eta^{k+j-1})}
&{\cal O}(2k+j-1).
\end{diagram}\]
It defines a rank one sheaf $\cal R$, and embeds it in $\OO(2k+j-1)$.
This embedding fails to be surjective when
$(1,\eta,\ldots,\eta^{k+j-1})M_j = 0$, or equivalently when
\[F (A^0(\zeta)-\eta\bbi)_{adj}=0 \text{ and } \det\bigl(A^0(\zeta)-\eta\bbi\bigr)=0.\]
These conditions can only be met on $S_0$, and, because of Equation (\ref{determinant}),
on $S_1$.
In short, there is a subvariety $D$ of $S_0\cap
S_1$, with ${\cal R} ={\cal O}(2k+j-1)\otimes {\cal I}_D$. There are
natural surjective maps from ${\cal R}$ to ${\cal L}_{\mu_1}^0$,
${\cal L}_{\mu_1}^1$: for the projection $\Pi_{m+n,n}$ on a sum with $m+n$ summands onto
the first $n$ summands and the injection $I_{n,n+m}$ into
the first $n$ summands, we have that the diagram of exact sequences
\[\begin{diagram}
\OO(-2)^{\oplus k}&\rTo^{A^0(\zeta)-\eta\bbi}& \OO^{\oplus k}&\rTo&{\cal L}^0_{\mu_1}\\
\uTo<{\Pi_{k+j-1,k}}&&\uTo<{\Pi_{k+j,k}}&&\uTo\\
{\cal O}(-2)^{\oplus k}\oplus V_{j-3, -j+1}&\rTo^{R(\zeta,\eta)}
 &{\cal O}^{\oplus k}\oplus V_{j-1, -j+1}
&\rTo &{\cal R} \\
\dTo<{I_{k+j-1, k+j}}&&\dTo<I&&\dTo\\
{\cal O}(-2)^{\oplus k}\oplus  V_{j-3, -j-1}&\rTo^{A^1(\zeta)-\eta\bbi}
 &{\cal O}^{\oplus k}\oplus V_{j-1, -j+1}
&\rTo &{\cal L}_{\mu_1}^1.
\end{diagram}\]
commutes. This diagram identifies ${\cal L}_{\mu_1}^0$,
${\cal L}_{\mu_1}^1$ as ${\cal O}(2k+j-1)[-D]$ over their respective
curves.

A similar analysis for $-\mu_1$ shows that ${\cal
  L}_{-\mu_1}^0$, ${\cal L}_{-\mu_1}^1$ are ${\cal O}(2k+j-1)[-D']$,
for some $D'$.
To relate $D$ to $D'$, note that as in \cite[Sec. 6]{hitchinMonopoles},
there is a real structure on ${\cal L}_0$ lifting the
real involution $\tau$.  Hence $T_i(0) = T_i(0)^T$.
On the other hand, for a solution $A(z)$ to Nahm's equations (\ref{eqn:Lax}),
\begin{equation}
\frac{dA(-z)^T}{dz} + [A_+(-z)^T,A(-z)^T] = 0.
\end{equation}
As solutions to the same differential equation with the same
initial condition, all the way around the circle,
\begin{equation}
A(-z) = A^T(z).
\end{equation}

This symmetry tells us that in terms of the matrix $A^1(\zeta),
A^0(\zeta)$ at $\mu_1$, the equation for $D'$ is
\begin{equation}
0 = (A^0(\zeta)-\eta\bbi)_{adj} G.
\end{equation}
Using Equation (\ref{determinant}), we see that along $S_0$ where
$\det\bigl(A^0(\zeta)-\eta\bbi\bigr)=0$, the intersection $S_0\cap S_1$
is cut out by $0=F\bigl(A^0(\zeta)-\eta\bbi\bigr)_{adj}G$.
Generically along $S_0$, the matrix
$(A^0(\zeta)-\eta\bbi)$  has corank one, and so
$(A^0(\zeta)-\eta\bbi)_{adj}$ has rank one. We can thus write it
as the product $U V$ of a column vector and a row vector. The
equation for the intersection is then $0=(F U)(V G)$, the
product of the defining relations for $D$ and $D'$. If the two curves
are smooth, intersecting transversally, then $S_0\cap S_1 = D+D'$.
This property holds independently of whether the matrices arise from calorons
or not. As one has the result for generic intersections, varying continuously
gives $S_0\cap S_1 = D+D'$ even for non-generic curves.

The skew Hermitian property of the $T_i$ implies that $A(\zeta, z) =
-\zeta^2\bar A(-1/\bar\zeta, z)^T$.  This symmetry transforms the
equation $0=F(A^0(-1/\bar\zeta)-\eta\bbi)_{adj}$ into
$0=(A^0(\zeta)-\eta\bbi)_{adj}G$, showing that $\tau(D) = D'$.

We have now obtained our complete generic spectral data from our
generic solution to Nahm's equations. The converse, starting with the
spectral data, is essentially done above, apart from the boundary
behaviour. This last piece is dealt with in \cite[Sec. 2]{hurtubiseMurray};
the conditions on the intersections of the spectral curves used here
are more general, but the proof goes through unchanged.

\section{Closing the circle}
\label{sec:closingcircle}
We have two different types of gauge fields, with a transform relating
them; both have complex data associated to them, which encodes
them; this data, we have seen (at least in the generic case) satisfies
the same conditions, with for example the sheaves
$\bigl(F/(F^0_{p,0}+F^\infty_{-(p-1),0}) \bigr)\otimes L^z(-1)$ and
$\bigl(F/(F^0_{p,1}+F^\infty_{-p,1})\bigr)\otimes L^z(-1)$
obtained from the caloron satisfying the same
conditions as   the ${\cal L}^t$ obtained from the solution to Nahm's equation
for appropriate values of $z$ and $t$. We
now want to check that the complex data associated to the
object is the same as that associated to the object's transform.

\subsection{Starting with a caloron.}
Starting with a caloron, one can define as above, the spectral curves
and sheaves over them.
\begin{itemize}
\item The curve $S_0$ is the locus  where
 $F^0_{p,0}$ and $F^\infty_{-p+1,0}$ have non-zero intersection.
\item The curve $S_1$ is the locus  where $F^0_{p,1}$ and
  $F^\infty_{-p,1}$  have non-zero intersection.
\item The quotient $F/(F^0_{p,0}+F^\infty_{-(p-1),0})$, supported
  over $S_0$, is generically isomorphic to the line bundle
  $L^{p\mu_0-\mu_1}(2k+j)[-S_{10}]|_{S_0} =
  L^{(p-1)\mu_0+\mu_1}(2k+j)[-S_{01}]|_{S_0}$.
\item The quotient $F/(F^0_{p,1}+F^\infty_{-p,1})$, supported  over
  $S_1$, is generically isomorphic to the line bundle
  $L^{p\mu_0+\mu_1}(2k+j)[-S_{01}]|_{S_1} =  L^{p\mu_0-\mu_1}(2k+j)[-S_{10}]|_{S_1}$.
\end{itemize}

We can ``shift'' the caloron by the $U(1)$ monopole with constant
Higgs field $iz$. Let us consider the direction in $\bbr^3$,
corresponding to $\zeta = 0$: the positive
$x_3$ direction. If one does this, for $z\in ((p-1)\mu_0+\mu_1, p\mu_0
-\mu_1)$, $F^0_{p,0}$ represents the sections in the kernel of
$\nabla_3-i\nabla_0+z $ along each cylinder that decay  at infinity in
the positive direction;
similarly, $F^\infty_{-(p-1),0}$  represents the sections along each
cylinder that decay at infinity in the negative
direction. For
$z\in (p\mu_0-\mu_1,p\mu_0+\mu_1)$, $F^0_{p,1}$ represents the
sections  along each cylinder that decay at infinity
in the positive direction; similarly, $F^\infty_{-p,1}$  represents
the sections along each
cylinder that decay at infinity in the negative
direction. With these shifts, the spectral curves represent the lines
for which the solutions that decay at one or other of the ends do not
sum to the whole bundle.
The quotients
$F/(F^0_{p,0}+F^\infty_{-(p-1),0})$ and $F/(F^0_{p,1}+F^\infty_{-p,1})$
represent this failure and therefore the existence of a solution decaying at both ends.
Suppose that $z\in
(p\mu_0-\mu_1,p\mu_0+\mu_1)$; the other case is identical.

Following the Nahm transform heuristic, starting with the caloron, we
solve the Dirac equation for the family of connections shifted by $z$, and
obtain in doing so a bundle over an interval and a solution to Nahm's equations
on this bundle.
The spectral curve, from
this point of view, over $\zeta = 0$, consist of the eigenvalues of
$A(0, z) = T_1+iT_2 (z)$; there is a sheaf over the curve whose
fibre over the point $\eta$ in the spectral curve given by the
cokernel of $A(0, z)-\eta\bbi$.

A priori, it is not evident what link there is between eigenvalues on
a space of solutions to an equation defined over all of space, and the
behaviour of solutions to an equation along a single line. The link is
provided by a remarkable formulation of the Dirac equation; see for instance
\cite{DK} or \cite{cherkis2001}.
The idea, roughly, is to write the Dirac equation as the equations for
the harmonic elements of the complex
\begin{equation}\label{complexSR}\begin{diagram}
L^2(V)&\rTo^{D_1 = \begin{pmatrix} \nabla_3+i\nabla_0+z \\
                                  -\nabla_1-i\nabla_2\end{pmatrix}}&L^2(V)^{\oplus 2}
&\rTo^{D_2 = \begin{pmatrix}\nabla_1+i\nabla_2& \nabla_3+i\nabla_0+z  \end{pmatrix} }
&L^2(V).\end{diagram}\end{equation}

\begin{proposition} \label{prop:Dirac-cohomology}
The operators $D_1, D_2$ commute with multiplication
by $w= x_1+ix_2$, and for self-dual connections, $D_2D_1 =0$.
The kernel  $K_z =\ker(D_2)\cap \ker(D_1^*)$ of the Dirac operator $D^*_z$
is naturally isomorphic to the cohomology $\ker(D_2)/\im(D_1)$ of the
complex.
\end{proposition}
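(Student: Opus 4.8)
The plan is to verify the three assertions in turn, all of which are essentially computations in the algebra generated by the covariant derivatives $\nabla_i,\nabla_0$, together with a standard Hodge-theoretic argument for the last one.

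First I would check that $D_1$ and $D_2$ commute with multiplication by $w = x_1+ix_2$. Since $w$ is just a function, $[\nabla_j, w] = (\partial_j w)$; because $w$ depends only on $x_1,x_2$ and is holomorphic in $x_1+ix_2$, the combination $\nabla_1 + i\nabla_2$ has $[\nabla_1+i\nabla_2, w] = \partial_1 w + i\partial_2 w = 0$, while $\nabla_3 + i\nabla_0 + z$ commutes with $w$ outright since $w$ is independent of $x_3,t$. Both entries of $D_1$ and both entries of $D_2$ are built from exactly these two operators, so the commutation is immediate. Next, $D_2 D_1 = (\nabla_1+i\nabla_2)(\nabla_3+i\nabla_0+z) - (\nabla_3+i\nabla_0+z)(\nabla_1+i\nabla_2) = [\nabla_1+i\nabla_2, \nabla_3+i\nabla_0]$. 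Expanding, this is $([\nabla_1,\nabla_3] - [\nabla_2,\nabla_0]) + i([\nabla_2,\nabla_3] + [\nabla_1,\nabla_0])$, i.e. a specific combination of curvature components $F_{13} - F_{20} + i(F_{23} + F_{10})$. The self-duality equations $F_{10} = F_{23}$, $F_{20} = F_{31}$, $F_{30} = F_{12}$ (with the appropriate sign conventions coming from the Pauli-matrix/quaternion identification used to define $D_z$) force precisely this combination to vanish; so $D_2 D_1 = 0$ on self-dual backgrounds. The one point to be careful about is matching the sign conventions in the self-duality equations to the particular ordering of $e_i$ and the $+z$ shift, so that it is this complex and not its conjugate that is a complex — I would state the conventions explicitly and then the vanishing is one line.

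For the cohomology statement, the key is that $D_z^* = D_1^* \oplus D_2$ on $L^2(V\otimes S_-) \simeq L^2(V)^{\oplus 2}$ in the decomposition used to build the complex, so $\ker D_z^* = \ker D_1^* \cap \ker D_2 = K_z$. The complex $L^2(V) \xrightarrow{D_1} L^2(V)^{\oplus 2} \xrightarrow{D_2} L^2(V)$ has $D_2 D_1 = 0$, and I would invoke the standard Hodge-theoretic fact: for a complex of Fredholm-type operators with closed ranges, the middle cohomology $\ker D_2 / \im D_1$ is represented by the harmonic space $\ker D_2 \cap \ker D_1^*$. Concretely, given $s \in \ker D_2$, orthogonally project onto $(\im D_1)^\perp = \ker D_1^*$; the projection still lies in $\ker D_2$ (since $\im D_1 \subset \ker D_2$), and the map $\ker D_2 \to \ker D_2 \cap \ker D_1^*$ so defined descends to an isomorphism on $\ker D_2/\im D_1$. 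The decay/Fredholm properties needed (closedness of $\im D_1$, so that the orthogonal decomposition is valid) are exactly the analytic facts established by Nye and Singer and recalled in Section~\ref{sec:NyeSinger}; the Weitzenböck formula $D_z D_z^* = \nabla^*\nabla + \rho(F^-)$ already guarantees $D_z^*$ — hence $D_1^*$ composed appropriately — has the requisite injectivity/closed-range behaviour.

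The main obstacle I anticipate is not any single step but the bookkeeping of conventions: pinning down which twist of which complex is the ``right'' one so that $D_2 D_1 = 0$ rather than $D_1 D_2^* = 0$ or some conjugate, and checking that the $L^2$ function spaces used in Section~\ref{sec:NyeSinger} are literally the ones making $D_1$ have closed range here (a mild shift in $z$ relative to the boundary points $\pm\mu_1 + n\mu_0$ must be respected). Once the conventions are fixed, each of the three claims is a short computation or a citation of standard Hodge theory.
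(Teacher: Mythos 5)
Your proposal is correct and matches the paper's approach: the curvature computation identifying $D_2D_1$ with the anti-self-dual component of $F$ in the $(\nabla_1+i\nabla_2,\,\nabla_3+i\nabla_0+z)$ frame, and the Hodge-theoretic representation of the middle cohomology by the harmonic space $\ker D_2\cap\ker D_1^*$. The paper, like you, defers the hard analytic input (Fredholmness of the complex on $S^1\times\R^3$, hence closed range of $D_1$) to the Mazzeo--Melrose calculus already used by Nye and Singer for $D_z^*$, and the sign-convention caveat you raise about why this combination of curvatures vanishes under self-duality rather than anti-self-duality is precisely the point to check.
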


On a compact manifold, this equivalence is part of
standard elliptic theory. In the non-compact case, the analysis must
be done with  care. The
simplest way, for us, is to adapt the work of Nye and Singer
\cite[Sec. 4]{singernye}. Using the techniques developed by Mazzeo
and Melrose \cite{mazzeomelrose}, they show that the Dirac operator
$D^*_z$ is Fredholm if and only an ``operator on fibers along the
boundary'' $P = (\nabla_0-z)_\infty + i(\eta_1e_1+\eta_2e_2+\eta_3e_3)$
is invertible along all the circles $S^1$ in $S^1\times S^2_\infty =
\partial(S^1\times \bbr^3) $. Again,
the $e_1$ are the Pauli matrices, and the constraint must hold
for all choices of constants $(\eta_1,\eta_2, \eta_3)$. Nye and Singer
show that  it does as long as $z$ does not have
values $\mu_i + n\mu_0$. In the the elliptic complex, the
constraint gets replaced by the essentially equivalent condition that
the complex
\[\begin{diagram}
L^2(V|_{S^1})&\rTo^{\begin{pmatrix} \eta_3+i(\nabla_0)_\infty+z \\
-\eta_1-i\eta_2\end{pmatrix}}& L^2(V|_{S^1})^{\oplus 2}&\rTo^{
\begin{pmatrix}\eta_1+i\eta_2& \eta_3+i(\nabla_0)_\infty+z\end{pmatrix} }&L^2(V|_{S^1})
\end{diagram}\]
defined over the circle be exact. The equivalence between the Euler
characteristic of the complex and the index of the Dirac operator then
goes through, establishing the isomorphism.

We now turn to analysing the way a solution to Nahm's equations is extracted
from this complex. The operator $A(0,z)$ on the kernel is defined by
multiplication by $x_1+ix_2$ followed by the projection on that kernel.
On the cohomology, it is simply multiplication by $w=x_1+ix_2$, simplifying
matters considerably.

Suppose $\eta$ is an eigenvalue  of $A(0,z)$, hence there
 is a  cohomology class $v$ such that
$(x_1+ix_2-\eta)(v) = D_1(s)$ for some $s$ in $L^2$.
In particular, along $w=\eta$ in $\SR$,
we have $(\nabla_3+i\nabla_0-z)s = 0$,
and so  $(\eta,\zeta)=(w,0)$ belongs to the spectral curve.
Conversely, if $(\nabla_3+i\nabla_0-z)s = 0$ along
$w=\eta$, we can build a cohomology class $v$ by extending
$s$ to a neighbourhood so that it is compactly supported in the $x_1,
x_2$ directions and satisfies $(\nabla_1+i\nabla_2)s = 0$ along
$w=\eta$; one then sets $v = \frac{D_1s}{w-\eta}$. The spectral curves
are thus identified.

We can further identify sections of the quotients 
$F/(F^0_{p,1}+F^\infty_{-p,1})$
(for the caloron twisted by $z$)
over $S_1\cap \{\zeta = 0\}$ with sections of the
the cokernel of $A^1(0,z)-\eta\bbi$ as both as supported over 
$S_1\cap \{\zeta=0\}$.
Indeed, let $\sigma_\epsilon(w)$ and $\rho(x_3)$ be smooth functions such that
\[\sigma(w)=\begin{cases}1, &|w|<\epsilon;\\ 0, &|w|>\epsilon;\end{cases}
\quad\text{ and }\quad
\rho(x_3)=\begin{cases}0, &x_3<0;\\ 1, &x_3>1.\end{cases}\]
Notice that the derivative $(\nabla_1+i\nabla_2)(\sigma_\epsilon(w))$ 
is supported
on the annulus $\epsilon<|w|<2\epsilon$.

Consider a ball $B$ centred at $\eta$ in the fiber above $\zeta=0$ of radius
$2\epsilon$ chosen such that $B\cap S_1=\{\eta\}$.  The ball parameterises
cylinders $S^1\times \{w\}\times \R$, for $w\in B$, and thus 
a section $\phi\in H^0(B,F)$
is in fact a section of $V$ on $S^1\times B\times \R$ satisfying
$(\nabla_3+i\nabla_0+z)\phi=0$.  The holomorphicity condition is then
$(\nabla_1+i\nabla_2)\phi=0$.

Let $\phi_0\in H^0(B,F_{p,1}^0|_{(0,\eta)})$. Then
$\sigma(w-\eta)\rho(x_3)\phi_0$ lies in $L^2(\SR,V)$.
Thus the section
$D_1(\sigma(w-\eta)\rho(x_3)\phi_0) $ is a coboundary for Complex (\ref{complexSR}).
Similarly, if $\phi_\infty$ lies in $F^\infty_{-p,1}$, then
$\sigma(w-\eta)(\rho(x_3)-1)\phi_\infty$ also lies in $L^2$, and its image by $D_1$
is also a coboundary.

Suppose for simplicity $S_1\cap \{\zeta=0\}$ contains only points of multiplicity
one.
Consider now a general section $\phi\in H^0(B,F)$.  Away from the spectral curve, $\phi$
decomposes into a sum $\phi_0+\phi_\infty$, and combining the two
constructions above gives a coboundary $K(\phi)$ for Complex (\ref{complexSR}).  This
decomposition has a pole at $\eta$ if $\phi(\eta)$ represents a
non trivial element in the quotient $F/(F^0_{p,1}+F^\infty_{-p,1})$.  However,
the section
\begin{align*}
K(\phi)&=D_1\bigl(\sigma_\epsilon(w-\eta)\rho(x_3)\phi-\rho_\infty\bigr)\\
    &=\begin{pmatrix}
         \sigma_\epsilon(w-\eta)\bigl(\nabla_3+i\nabla_0+z\bigr)
         \bigl(\rho(x_3)\phi-\rho_\infty\bigr)\\
      -\bigl(\nabla_1+i\nabla_2\bigr)
            \bigl(\sigma_\epsilon(w-\eta)\rho(x_3)\phi-\rho_\infty\bigr)
\end{pmatrix}\end{align*}
is $L^2$.  Because of the pole of $\phi_0+\phi_\infty$, it is not in $D_1(L^2(V))$ but
by construction it is definitely in the kernel of $D_2$.  It thus
represents a non-trivial cohomology class for Complex (\ref{complexSR}).

Doing this for each point of $S_1\cap \{\zeta =0\}$ expresses $K_z$ as a sum
of classes localised along the lines in $S^1\times \bbr^3$ corresponding to
the intersection of the spectral curve with $\zeta = 0$, identifying $K_z$
with sections of  $F/(F^0_{p,1}+F^\infty_{-p,1})$ over $\zeta = 0$. Let
$K_z(\phi)$ correspond to a non-zero element $\phi$ of
$F/(F^0_{p,1}+F^\infty_{-p,1})$ over $(\eta, 0)$.

This class projects non-trivially to the cokernel of $A(0,z)-\eta\bbi$.
Indeed, the action of $A(0,z)$ on $K_z$ is multiplication by $w=x_1+ix_2$,
so if $K_z(\phi)=(A(0,z)-\eta\bbi)K_z(\sigma)= (w-\eta)K_z(\sigma)$ for some
$\sigma$ with $\phi= (w-\eta) \sigma$, then the decomposition
$\phi=\phi_0+\phi_1$ is holomorphic and so $K_z(\phi) = 0$.

For intersections of higher multiplicity, the case presents
only notational difficulties, but is conceptually identical.
The choice of $\zeta$ made above just simplified the analysis, varying
it we obtain the following result.

\begin{proposition}
For $z\in (p\mu_0-\mu_1,p\mu_0+\mu_1)$, the curves $S_1$ associated
to the caloron and $S_1'$ associated to its Nahm transform
are identical; there is a natural isomorphism between the
 sheaf $F/(F^0_{p,1}+F^\infty_{-p,1})$ for the caloron shifted by $z$
and the cokernel sheaf of $A^1(\zeta, z)-\eta$.

Similarly, for $z\in ((p-1)\mu_0+\mu_1, p\mu_0-\mu_1)$, the curves
$S_0$ associated to the caloron and  $S_0'$ associated to its
Nahm transform are identical; there is a natural isomorphism between
the sheaf $F/(F^0_{p,0}+F^\infty_{-(p-1),0})$ for the caloron shifted by $z$
 and the cokernel sheaf of $A^0(\zeta, z)-\eta$.
\end{proposition}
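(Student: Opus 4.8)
The plan is to assemble the final Proposition as a summary of the identifications established in the preceding discussion, promoting the pointwise/fibrewise matches to natural isomorphisms of sheaves over $T\bbp^1$, and then to note that the argument is symmetric under varying the auxiliary direction $\zeta$. First I would fix $z\in(p\mu_0-\mu_1,p\mu_0+\mu_1)$ and recall that, as shown just above, the spectral curve $S_1'$ of the Nahm transform over the slice $\zeta=0$ consists exactly of those $\eta$ for which $(\nabla_3+i\nabla_0-z)s=0$ admits a solution $s$ along the line $w=\eta$; the two-way construction (extending such an $s$ to a neighbourhood with $(\nabla_1+i\nabla_2)s=0$ along $w=\eta$, and conversely reading off a cohomology class of Complex~(\ref{complexSR})) shows this locus coincides with $S_1\cap\{\zeta=0\}$. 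Varying $\zeta$ — that is, rotating the distinguished direction in $\bbr^3$ and correspondingly changing which $\nabla_i$-combination plays the role of $\nabla_3+i\nabla_0$ — gives $S_1'=S_1$ as curves in $T\bbp^1$, not merely over one fibre; the only thing to check is that the identification is compatible across overlapping choices of $\zeta$, which follows because the underlying object (the caloron, hence its family of Dirac operators $D_z$) is intrinsic.

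Next I would upgrade the sheaf identification. Over $\zeta=0$, the map $\phi\mapsto K_z(\phi)$ sends a section $\phi\in H^0(B,F)$ representing a class in $F/(F^0_{p,1}+F^\infty_{-p,1})$ to the $L^2$ cohomology class $K_z(\phi)=D_1(\sigma_\epsilon(w-\eta)\rho(x_3)\phi-\rho_\infty)$, and the argument already given shows $K_z(\phi)=0$ precisely when the splitting $\phi=\phi_0+\phi_\infty$ is holomorphic at $\eta$, i.e. when $\phi$ is already zero in the quotient; moreover $A(0,z)$ acts on $K_z$ by multiplication by $w$, so $K_z(\phi)$ projects non-trivially to $\coker(A^1(0,z)-\eta\bbi)$. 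This exhibits, fibrewise over each point of $S_1\cap\{\zeta=0\}$, an injection from the stalk of $F/(F^0_{p,1}+F^\infty_{-p,1})$ into that of the cokernel sheaf; a dimension count (both are rank-one sheaves supported on the same curve, with the same length by the index computation for $D_z$ and by Sequence~(\ref{basic-sequence})) forces it to be an isomorphism. Letting $\zeta$ vary glues these fibrewise isomorphisms into a global isomorphism of sheaves on $T\bbp^1$, natural because $K_z$ is built canonically from the Dirac complex. The case $z\in((p-1)\mu_0+\mu_1,p\mu_0-\mu_1)$ with $S_0$, $A^0$ and $F/(F^0_{p,0}+F^\infty_{-(p-1),0})$ in place of the $S_1$-data is word-for-word the same, as remarked in the text.

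The main obstacle is the globalisation in $\zeta$: the constructions of $K_z(\phi)$ and of the cokernel sheaf were carried out over the single slice $\zeta=0$ using the specific decomposition of $D_z$ adapted to the $x_3$-direction, and one must check that choosing a different unit direction produces the \emph{same} sheaf, with transition data agreeing on overlaps. I would handle this by observing that the elliptic complex~(\ref{complexSR}) is the $\zeta=0$ member of a $\bbp^1$-family of complexes — precisely the family whose $R^1$ along the cylinders is the direct-image bundle $F$ — so that the comparison map $\phi\mapsto K_z(\phi)$ is the restriction to $\zeta=0$ of a morphism of sheaves over all of $T\bbp^1$; naturality and the fibrewise isomorphism statement then give the result globally. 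A secondary, purely bookkeeping point is the higher-multiplicity case for $S_1\cap\{\zeta=0\}$: one replaces the single cut-off $\sigma_\epsilon(w-\eta)$ by cut-offs supported near the cluster of intersection points and expands $\phi$ in the local structure sheaf of the (possibly non-reduced) divisor, but no new idea is needed, only more indices.
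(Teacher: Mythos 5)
Your proposal is correct and follows essentially the same route as the paper: the Proposition is indeed presented there as a summary of the preceding identifications (spectral curves via the kernel of $\nabla_3+i\nabla_0+z$ along lines, the map $\phi\mapsto K_z(\phi)$, the action of $A(0,z)$ by multiplication by $w$), with the remark that varying $\zeta$ and handling higher multiplicities are routine. The only minor difference is that you make the isomorphism explicit via an injectivity-plus-dimension-count argument, whereas the paper obtains it directly from the identification of $K_z$ with sections of $F/(F^0_{p,1}+F^\infty_{-p,1})$ over $\zeta=0$; both amount to the same content.
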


We remark, in particular, for generic solutions, that flowing to a
boundary point gives the divisor $D$ in the intersection of the two
curves, and so an identification of the spectral data.

\subsection{Starting with a solution to Nahm's equation.}
We now want to identify the spectral data for a solution to Nahm's equations 
and the caloron it produces.

{\bf The case $j>0$.}  Again, as for the opposite direction, it is be
more convenient to give a cohomological interpretation of the Dirac
equation. This interpretation is governed by choosing a particular
direction in $\bbr^3$, and so a particular equivalence of
$\SR$ with $\bbc^*\times \bbc$. We choose the direction
corresponding to $\zeta=0$, so set $\beta = T_1+iT_2$, $w =
x_1+ix_2$, $\alpha =T_0-iT_3$, $y= \mu_0x_0 -ix_3$,
and consider the complex
\begin{equation}\label{Dirac-complex}\begin{diagram}
\tilde L^2_2 &\rTo^{D_1 = \begin{pmatrix} i(d_z+\alpha-iy)\\
      -\beta+iw\end{pmatrix}}&(\tilde L^2_1)^{\oplus 2}&
\rTo^{D_2 = \begin{pmatrix}\beta-iw & i(d_z+\alpha-iy)\end{pmatrix}}
         &\tilde L^2.\end{diagram}\end{equation}
For this complex to be defined, we must display a bit of care in our choice
of function spaces:
\begin{itemize}
\item $\tilde L^2_2$ is the space of  sections $s$ of $K$ that are $L^2_2$ over each
interval and such that, for subscripts denoting the limits
on the appropriates sides of the jump points,
  $\pi(s_{\rm large}) =s_{\rm small}$,  $\pi(d_zs_{\rm large})=d_zs_{\rm small}$;
\item $(\tilde L^2_1)^{\oplus 2}$ is the space of sections $(s_1, s_2)$ of
$K\oplus K$ that are in $L^2_1$ over each interval, with
$\pi(s_{\rm large}) = s_{\rm small}$ at the jump points;
\item $\tilde L^2$ is the space of $L^2$ sections of $K$.
\end{itemize}

\begin{proposition}\label{prop:D1D1}
For solutions to Nahm's equations, $D_2D_1=0$, and
\begin{equation}D_1^*D_1=-\bigl(d_z+(T_0-i\mu_0x_0)\bigr)^2 -
                \sum_{j=1}^3 (T_j-ix_j)^2.\end{equation}
So for $v\ne 0 $ satisfying  $D_1^*D_1(v)=0$, there is a point-wise relation
\begin{equation}\label{eqn:convexity}d_z^2\|v\|^2  =
2\|\bigl(d_z+(T_0-i\mu_0x_0)\bigr)v\|^2+ 2\sum_{j=1}^3\|(T_j-ix_j)v\|^2\ge 0.
\end{equation}
Hence if $v$ is a regular solution vanishing at two points, $v=0$.
\end{proposition}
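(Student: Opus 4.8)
The plan is to handle the three assertions in order, the whole thing reducing to rewriting every operator in terms of the fibrewise skew-Hermitian multiplication operators $B_j := T_j - ix_j$ ($j=1,2,3$) and the formally skew-adjoint operator $B_0 := d_z + (T_0 - i\mu_0 x_0)$, and then invoking Nahm's equations (\ref{eqn:Nahm}) to kill the inevitable cross-commutators. With the notation of the proposition one checks immediately that $\beta - iw = B_1 + iB_2$ and $d_z + \alpha - iy = B_0 - iB_3$; crucially the scalars $w = x_1 + ix_2$ and $y = \mu_0 x_0 - ix_3$ are $z$-independent and commute with the $T_i$.

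For $D_2D_1 = 0$: since $-\beta + iw = -(\beta - iw)$, the composite $D_2D_1$ collapses to the single commutator $i\,[\beta - iw,\, d_z + \alpha - iy]$; the $w$ and $y$ pieces contribute nothing, leaving $i\bigl([\beta, d_z] + [\beta,\alpha]\bigr) = i\bigl(-\tfrac{d\beta}{dz} + [\beta,\alpha]\bigr)$. Writing $\beta = T_1 + iT_2$, $\alpha = T_0 - iT_3$ and adding the $T_1$- and $T_2$-equations of (\ref{eqn:Nahm}) (the first with weight $1$, the second with weight $i$) gives exactly $\tfrac{d\beta}{dz} = [\beta,\alpha]$, so $D_2D_1 = 0$.

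For the Weitzenbock identity I would compute $D_1^*D_1 = P^*P + Q^*Q$ with $P = i(B_0 - iB_3)$, $Q = -(B_1 + iB_2)$, using the fibrewise skew-adjointness of the $B_j$ ($j\ge 1$) and of $c := T_0 - i\mu_0 x_0$, together with the formal skew-adjointness of $B_0$. One finds $P^*P = -B_0^2 - B_3^2 + i[B_0,B_3]$ and $Q^*Q = -B_1^2 - B_2^2 - i[B_1,B_2]$, so $D_1^*D_1 = -\sum_{j=0}^3 B_j^2 + i\bigl([B_0,B_3] - [B_1,B_2]\bigr)$. The obstruction term unwinds: $[B_0,B_3] = \tfrac{dT_3}{dz} + [T_0,T_3]$ while $[B_1,B_2] = [T_1,T_2]$, and these agree by the remaining ($T_3$-)equation of (\ref{eqn:Nahm}). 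Hence $D_1^*D_1 = -\bigl(d_z + (T_0 - i\mu_0 x_0)\bigr)^2 - \sum_{j=1}^3 (T_j - ix_j)^2$.

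For the convexity relation (\ref{eqn:convexity}) and the vanishing conclusion, set $g(z) := \langle v(z), v(z)\rangle$ in the fibrewise Hermitian metric. Because $c$ is skew-Hermitian, $\operatorname{Re}\langle cv,v\rangle = 0$, which gives $g' = 2\operatorname{Re}\langle B_0 v, v\rangle$; differentiating once more, a cross-term again cancels via $c^* = -c$ and one gets $g'' = 2\operatorname{Re}\langle B_0^2 v, v\rangle + 2\|B_0 v\|^2$. When $D_1^*D_1 v = 0$ the identity just proved yields $B_0^2 v = -\sum_{j=1}^3 B_j^2 v$, and $\langle B_j^2 v, v\rangle = -\|B_j v\|^2$ turns this into $g'' = 2\|B_0 v\|^2 + 2\sum_{j=1}^3 \|B_j v\|^2 \ge 0$, i.e. (\ref{eqn:convexity}). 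Thus $g$ is non-negative and convex on any interval where $v$ is regular; if it vanishes at two such points it vanishes identically between them, whence $v$ does too, and since $v$ solves the nondegenerate second-order linear ODE $D_1^*D_1 v = 0$, uniqueness of solutions propagates $v\equiv 0$. The main obstacle here is not the algebra but the bookkeeping of the $\pm i$'s in the adjoints, and, in the last step, verifying that ``regular'' at the rank-jump points $\pm\mu_1$ — i.e. the matching conditions $\pi(s_{\rm large}) = s_{\rm small}$, $\pi(d_zs_{\rm large}) = d_zs_{\rm small}$ defining the domain $\tilde L^2_2$ — is precisely what lets the convexity argument run across (or patch over) the junctions.
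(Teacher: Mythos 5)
Your computation is correct and is exactly the standard Weitzenbock-type verification that the paper treats as routine and states without proof. In particular the reduction of $D_2D_1$ to $i[\beta-iw,\,d_z+\alpha-iy]$ followed by $\frac{d\beta}{dz}=[\beta,\alpha]$, and the cancellation $i([B_0,B_3]-[B_1,B_2])=0$ in $D_1^*D_1$, are precisely the recombinations of Nahm's three equations one needs; the convexity step via $g''=2\operatorname{Re}\langle B_0^2v,v\rangle+2\|B_0v\|^2$ together with skew-adjointness of the $B_j$ is the intended argument, and your closing remark that ODE uniqueness (with the $\tilde L^2_2$ matching conditions at $\pm\mu_1$) propagates the vanishing is the right way to finish.
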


We look at solutions to the equations $D_1^*D_1(v)=0$ near a boundary
point $\pm \mu_1$; one has  on the interval $(-\mu_1, \mu_1)$ near
$\pm \mu_1$, a decomposition $\bbc^{k+j} = \bbc^k\oplus\bbc^j$; the
$2k$ dimensional space of solutions with boundary values (and
derivatives) in $\bbc^k$ continue outside the interval, and solutions
with boundary values in $\bbc^j$ are governed by the theory of regular
singular points for o.d.e.s. As the sum of the squares of the residues
at the boundary points  of the $T_i$ is half of the Casimir element in the
enveloping algebra of $su(2)$,
its value is $-(j-1)^2/4$. The theory of regular singular o.d.e. then
gives, for $\rho$ one  of $(1\pm j)/2$, $j$ dimensional spaces of
solutions of the form $\hat z^\rho f(\hat z)$ to $D_1^*D_1(v)=0$,
where $f(\hat z)$ is analytic, at each of the boundary points. Here
$\hat z$ is a coordinate whose origin is at the boundary point.

We can use this knowledge to build a Green's function. This operation is somewhat
complicated by the poles of the $T_i$, as we now see in a series of lemmas.

\begin{lemma}           
Let $x\in (-\mu_1, \mu_1)$, and $u\in \bbc^{k+j}$. There is a unique
solution to $D_1^*D_1(v)=\delta_xu$ on the circle. \end{lemma}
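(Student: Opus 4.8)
The plan is to set up the problem as an ODE boundary value problem on the circle, where the main subtlety is that the coefficients $T_i$ have simple poles at $\pm\mu_1$, so the standard Green's function construction must be adapted to regular singular points. First I would fix the point $x\in(-\mu_1,\mu_1)$ and observe that the operator $D_1^*D_1$ is a second-order self-adjoint ODE operator with smooth coefficients on each open interval of the circle minus the two boundary points, and it degenerates in the controlled way described just above: near $\pm\mu_1$ its solution space is spanned by functions of the form $\hat z^\rho f(\hat z)$ with $f$ analytic and $\rho$ one of $(1\pm j)/2$, together with the $2k$-dimensional family coming from the small side which continues smoothly. Existence and uniqueness of a solution to $D_1^*D_1(v)=\delta_x u$ is then equivalent to showing that the homogeneous equation $D_1^*D_1(v)=0$, with the boundary/matching conditions defining $\tilde L^2_2$ imposed at $\pm\mu_1$, has only the zero solution in the relevant function space.

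The key step is this vanishing of the homogeneous kernel. Here I would invoke Proposition~\ref{prop:D1D1}: if $D_1^*D_1(v)=0$ with $v$ in the domain, then the pointwise convexity inequality \eqref{eqn:convexity} holds, so $\|v\|^2$ is a convex function of $z$ on each interval where $v$ is regular. On the circle, a globally defined nonnegative convex function must be constant, and then \eqref{eqn:convexity} forces $(T_j-ix_j)v=0$ for all $j$ and $(d_z+(T_0-i\mu_0x_0))v=0$; since for generic $(x,t)$ the matrices $T_j-ix_j$ have trivial common kernel, $v=0$. Near the singular points one must check that a solution $v$ lying in $\tilde L^2_2$ cannot pick up the singular behaviour $\hat z^{(1-j)/2}$ (which for $j>1$ would blow up), so only the regular branches survive and the convexity argument applies across $\pm\mu_1$; this is where the matching conditions $\pi(s_{\rm large})=s_{\rm small}$, $\pi(d_zs_{\rm large})=d_zs_{\rm small}$ and the block form of the $T_i$ from Section~\ref{sec:gaugefields}-B are used.

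With the homogeneous problem shown to have trivial kernel, I would construct the Green's function in the standard way: on the interval, solve the homogeneous equation with prescribed asymptotics at each regular singular endpoint to get a distinguished fundamental system, form the resolvent kernel by the usual Wronskian formula (taking care that the Wronskian of the chosen solutions is nonvanishing precisely because there is no nontrivial global solution), and verify that the resulting $v$ satisfies the jump at $x$ dictated by $\delta_x u$, lies in $L^2_2$ on each interval, and respects the $\pi$-matching conditions at $\pm\mu_1$ because of how the singular solutions were selected. Uniqueness is immediate from the kernel computation: the difference of two solutions solves the homogeneous problem and hence vanishes.

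The main obstacle I anticipate is the behaviour at the regular singular points $\pm\mu_1$: one must show precisely which indicial roots $\rho$ are compatible with membership in $\tilde L^2_2$, rule out the logarithmic/resonant cases that can arise when the indicial roots $(1+j)/2$ and $(1-j)/2$ differ by an integer, and confirm that the surviving solution space has exactly the right dimension ($2k$ on the small side plus the appropriate count on the large side) for the Wronskian/Green's function construction and the convexity argument to mesh. Once the local analysis at the singular points is pinned down, the global argument on the circle via \eqref{eqn:convexity} is short, and everything else is routine ODE theory.
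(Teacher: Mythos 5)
Your proposal and the paper's proof share the same analytic input — the pointwise convexity of $\|v\|^2$ from Proposition~\ref{prop:D1D1} — but they organize the construction quite differently, and I think the paper's route is worth contrasting with yours. You propose the standard abstract scheme: show the homogeneous problem in $\tilde L^2_2$ has trivial kernel, then build the Green's function by a Wronskian/fundamental-system argument, accepting the burden of analyzing the Frobenius theory at $\pm\mu_1$ (including the resonant cases you flag, since the indicial roots $(1\pm j)/2$ differ by the integer $j$). The paper instead \emph{parametrizes the candidate Green's functions directly} by a finite-dimensional space: $U$ (the $2k$-dimensional space of solutions on the small arc, continued into $(-\mu_1,\mu_1)$ from each endpoint as $f_\pm$) together with the $j$-dimensional ``born-at-the-endpoint'' spaces $V_\pm$ of solutions of the form $\hat z^{(1+j)/2}f(\hat z)$, and considers the linear map $R_x\colon U\oplus V_+\oplus V_-\to\bbc^{k+j}\oplus\bbc^{k+j}$ recording the jump in value and derivative at $x$. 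Convexity gives injectivity of $R_x$, a dimension count ($2k+2j=2(k+j)$) gives bijectivity, and solving $R_x(f,g_+,g_-)=(0,u)$ produces the Green's function by gluing. This does the homogeneous-kernel step and the construction in one stroke and, crucially, sidesteps the resonant/logarithmic indicial-root issue entirely: only the regular branch $\hat z^{(1+j)/2}$ is ever used, because the candidate spaces are chosen from the start, rather than recovered a posteriori from a fundamental system.

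One genuine inaccuracy in your sketch: you appeal to ``generic $(x,t)$'' when concluding that $(T_j-ix_j)v=0$ for all $j$ together with covariant constancy forces $v=0$. This cannot be a genericity statement — the Green's function is needed for \emph{every} parameter $(x,t)\in\SR$, since it underlies the construction of the bundle $V$ over all of $\SR$. The correct ingredient is the \emph{irreducibility} hypothesis on the solution to Nahm's equations (Section~\ref{sec:gaugefields}-B), which is exactly what the paper invokes a few lines later to get the Poincar\'e-type bound $\int\|(\nabla_0-i\mu_0x_0)v\|^2+\sum_j\|(T_j-ix_j)v\|^2\ge C\int\|v\|^2$. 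With that correction, and with the Frobenius analysis at $\pm\mu_1$ actually carried out, your route would close, but it is substantially more work than the paper's finite-dimensional linear-algebra argument.
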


\begin{proof}
Such a solution must be continuous, smooth on the circle with a single jump in the
derivative at $x$, of value $u$. Let $U$ be the $2k$-dimensional space of solutions
on the small side, outside $(-\mu_1, \mu_1)$.  Those solutions propagate into
the interval from both ends, and for $f\in U$, let $f_-$ be the continuation into
the interval $(-\mu_1,\mu_1)$ from the $-\mu_1$ side, and $f_+$ from the $\mu_1$ side.
Let $V_+$ and $V_-$ be the $j$-dimensional spaces of solutions
of the form $\hat z^{(1+j)/2} f(\hat z)$ respectively born at $\mu_1$ and  $-\mu_1$.
Consider the map
\begin{align*}
R_x\colon U\oplus V_+\oplus V_-&\rightarrow \bbc^{k+j}\oplus \bbc^{k+j}\\
(f, g_+, g_-)&\mapsto (f_-(x)+g_-(x)-f_+(x)-g_+(x),
f'_-(x)+g'_-(x)-f'_+(x)-g'_+(x)).
\end{align*}
This map must be injective (thus bijective) because of the convexity property of solutions
given by Equation (\ref{eqn:convexity}). If $R_x^{-1}(0,u)=(f,g_+,g_-)$, the
desired Green's function $v$ can be chosen by taking $f$ outside of $(-\mu_1,
\mu_1)$, $f+g_-$ on $(-\mu_1, x)$, and $f+g_+$ on $(x, \mu_1)$.
\end{proof}      

\begin{lemma}           
Let $u_+, u_-\in \bbc^{k}$. There is a unique solution to
$D_1^*D_1(v)=0$ on $(-\mu_1, \mu_1)$ with values $u_+, u_-$ at
$\mu_1,-\mu_1$ respectively. \end{lemma}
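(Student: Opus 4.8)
The plan is to mimic the proof of the previous lemma: exhibit the finite-dimensional space $\mathcal S$ of admissible solutions of $D_1^*D_1(v)=0$ on $(-\mu_1,\mu_1)$, define the boundary-evaluation map $\mathcal S\to\bbc^k\oplus\bbc^k$, and show it is a bijection by combining a dimension count with the convexity inequality (\ref{eqn:convexity}).

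First I would record the local solution theory at the endpoints. On $(-\mu_1,\mu_1)$ the equation $D_1^*D_1(v)=0$ is a second-order linear system on the rank-$(k+j)$ bundle $K$, so its full solution space $\mathcal S_{\mathrm{tot}}$ has dimension $2(k+j)$. Near $\mu_1$ this space decomposes, as recalled above, into the $2k$-dimensional space of solutions whose value and derivative lie in $\bbc^k$ (those continuing to regular solutions on the small side), together with two $j$-dimensional spaces of solutions with $\bbc^j$-behaviour $\hat z^{(1+j)/2}f(\hat z)$ and $\hat z^{(1-j)/2}f(\hat z)$ respectively, coming from the two indicial roots $(1\pm j)/2$. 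Call $v$ \emph{admissible at $\mu_1$} if its component along the ``bad'' root $(1-j)/2$ vanishes --- $j$ linear conditions --- and similarly at $-\mu_1$; let $\mathcal S\subset\mathcal S_{\mathrm{tot}}$ be the solutions admissible at both ends. Being cut out by at most $2j$ linear conditions, $\dim\mathcal S\ge 2(k+j)-2j=2k$. For $v\in\mathcal S$ the $\bbc^j$-component is $O(\hat z^{(1+j)/2})$ at each endpoint, so $v$ has a well-defined value in $\bbc^k$ there, namely the value of its regular part; this gives a linear map
\[
\mathcal R\colon\mathcal S\longrightarrow\bbc^k\oplus\bbc^k,\qquad v\longmapsto\bigl(v(\mu_1),v(-\mu_1)\bigr),
\]
and the lemma asserts precisely that $\mathcal R$ is bijective, $\mathcal R^{-1}(u_+,u_-)$ being the desired solution.

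Injectivity is where (\ref{eqn:convexity}) enters, paralleling the argument already used for the map $R_x$ of the previous lemma. Suppose $v\in\mathcal S$ with $\mathcal R(v)=0$. Near each endpoint $v$ is the sum of a $C^1$ piece vanishing there and a piece that is $O(\hat z^{(1+j)/2})$, so $z\mapsto\|v(z)\|^2$ extends continuously to $[-\mu_1,\mu_1]$ with both boundary values $0$; on the open interval $v$ is smooth and solves $D_1^*D_1(v)=0$, so by (\ref{eqn:convexity}) this function is convex. A non-negative convex function on $(-\mu_1,\mu_1)$ with vanishing limits at both ends is identically $0$, so $v\equiv0$. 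Hence $\dim\mathcal S\le 2k$, and together with $\dim\mathcal S\ge 2k$ this forces $\dim\mathcal S=2k$ and $\mathcal R$ an isomorphism.

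The routine but genuinely technical part --- the step I expect to need the most care --- is the behaviour at the regular singular endpoints: one must know, from the Frobenius analysis already invoked (using that the residues of the $T_i$ form an irreducible $su(2)$-representation, so $\sum R_i^2=-\tfrac{j^2-1}{4}$ and the indicial roots are $(1\pm j)/2$), that the local solution space really splits as $2k+j+j$ with the stated asymptotics, and that an admissible $v$ has $\|v\|^2$ continuous up to the boundary with limit $\|v(\pm\mu_1)\|^2$. Granting this, the dimension count and the convexity argument close the proof; the case $j=1$ (trivial representation, no actual singular point) is covered by the same bookkeeping, with ``admissible'' then meaning that the $\bbc^1$-component of $v$ vanishes at the endpoint.
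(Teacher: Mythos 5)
Your proof is correct and uses the same two essential ingredients as the paper: a dimension count and the convexity inequality~(\ref{eqn:convexity}) to force injectivity. The paper organizes the bookkeeping slightly differently --- it introduces the two $(k+j)$-dimensional affine families $W_\pm$ of admissible solutions with the prescribed boundary value at one end, and shows the matching map at an interior point $x$, $(g_+,g_-)\mapsto(g_-(x)-g_+(x),\,g'_-(x)-g'_+(x))$, is injective (hence bijective) by the same convexity argument --- whereas you work directly with the $2k$-dimensional space of solutions admissible at \emph{both} ends and the boundary-evaluation map. These are equivalent reorganizations of the same idea, and your version, arguably cleaner, is fine; you also correctly flag the one technical point (the Frobenius analysis at the regular singular endpoints, including the $j=1$ degeneration) that both arguments rely on.
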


\begin{proof}Let $W_-$ and $W_+$ be the $k+j$-dimensional affine spaces of
 solutions  with boundary value $u_-$ at $-\mu_1$, and 
$u_+$ at $\mu_1$ respectively. Consider at an intermediary
 point $x$ the affine map
\begin{align*}
R_x\colon W_+\oplus W_-&\rightarrow \bbc^{k+j}\oplus \bbc^{k+j}\\
(g_+, g_-)&\mapsto (g_-(x)-g_+(x),  g'_-(x)-g'_+(x)).
\end{align*}
It is injective, otherwise one has, taking
differences, an element of $\ker(D_1^*D_1)$
vanishing at both ends; it is thus surjective,
and the inverse image of zero gives solutions whose
values and derivatives match at $x$.
\end{proof}      

A similar result holds on the other interval.
\begin{lemma}           
Let $x\in (\mu_1, \mu_0 -\mu_1)$, $u_+, u_-\in \bbc^{k}$ and $u\in
\bbc^{k}$. There is a unique solution to $D_1^*D_1(v)=\delta_xu$ on
the interval, with value $u_+$ at $\mu_1$, and value $u_-$  at $\mu_0
-\mu_1$. \end{lemma}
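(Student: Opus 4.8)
The plan is to follow the two preceding lemmas, the essential simplification being that the interval $(\mu_1,\mu_0-\mu_1)$ is the ``small side'', where the $T_i$ have no poles, so that $D_1^*D_1$ is there a genuinely \emph{regular} second order ordinary differential operator, with leading term $-d_z^2$ by Proposition~\ref{prop:D1D1}. Since $x$ lies in the open interval, it splits it into two nondegenerate pieces $(\mu_1,x)$ and $(x,\mu_0-\mu_1)$, and I would build $v$ by matching solutions on the two pieces across $x$, with the continuity and derivative--jump dictated by the source $\delta_x u$.

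First I would set up the matching. Let $W_+$ be the affine space of solutions of $D_1^*D_1 g=0$ on $(\mu_1,x)$ with $g(\mu_1)=u_+$, and $W_-$ the affine space of solutions on $(x,\mu_0-\mu_1)$ with $g(\mu_0-\mu_1)=u_-$. As $D_1^*D_1$ is order two on $\bbc^k$-valued functions, its homogeneous solution space on each subinterval has dimension $2k$, so fixing the value at one endpoint gives $\dim W_\pm=k$. Consider the affine map
\[
R_x\colon W_+\oplus W_-\longrightarrow \bbc^k\oplus\bbc^k,\qquad
(g_+,g_-)\longmapsto\bigl(g_+(x)-g_-(x),\ g_+'(x)-g_-'(x)-u\bigr),
\]
where the sign and strength in the second slot are the ones for which a zero of $R_x$ yields, on gluing $g_+$ on $(\mu_1,x)$ and $g_-$ on $(x,\mu_0-\mu_1)$, a continuous $v$ with the prescribed boundary values and with $D_1^*D_1 v=\delta_x u$ (the normalization $D_1^*D_1=-d_z^2+\cdots$ fixing it).

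Next I would show $R_x$ is a bijection. Its associated linear map acts on pairs $(h_+,h_-)$ with $h_+(\mu_1)=0$ and $h_-(\mu_0-\mu_1)=0$; an element of its kernel has $h_+$ and $h_-$ agreeing at $x$ together with their first derivatives, hence, by uniqueness for the regular equation $D_1^*D_1 h=0$, glues to a single solution on $(\mu_1,\mu_0-\mu_1)$ vanishing at both endpoints. Since this interval carries no singular points, that solution is regular, and the convexity estimate (Equation~(\ref{eqn:convexity}) of Proposition~\ref{prop:D1D1}) forces it to vanish. Thus the linear map is injective, and since $\dim(W_+\oplus W_-)=2k=\dim(\bbc^k\oplus\bbc^k)$ it is an isomorphism, so $R_x$ is an affine bijection; its unique zero gives the desired $v$. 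Uniqueness of $v$ then follows at once: the difference of two solutions is a regular solution of $D_1^*D_1 w=0$ with $w(\mu_1)=w(\mu_0-\mu_1)=0$, hence zero by the same estimate.

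There is essentially no obstacle here beyond bookkeeping: unlike the large--side lemmas, no regular--singular analysis at $\pm\mu_1$ is needed, and one only has to keep track of the rank $2k$ of the homogeneous solution space and of the sign and strength of the derivative jump at $x$ corresponding to the source $\delta_x u$.
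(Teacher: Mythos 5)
Your proof is correct and is essentially the argument the paper has in mind: the paper leaves this lemma unproved, remarking only that ``a similar result holds on the other interval,'' and your argument is precisely the natural hybrid of the two preceding proofs (boundary-value matching as in the second lemma, plus a derivative jump at $x$ to account for $\delta_x u$ as in the first), with injectivity of the matching map again coming from the convexity estimate of Proposition~\ref{prop:D1D1}. The one point worth making explicit, which you handle correctly, is that on $(\mu_1,\mu_0-\mu_1)$ the $T_i$ are regular, so the $2k$-dimensional homogeneous solution space and the dimension count $\dim W_\pm = k$ are unproblematic and no regular-singular analysis is needed.
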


Combining the two previous lemmas, we obtain the Green's function for the small side.
\begin{lemma}           
Let $x\in (\mu_1, \mu_0 -\mu_1)$ and $u\in \bbc^{k}$. There is a
unique solution to $D_1^*D_1(v)=\delta_xu$ on the circle. \end{lemma}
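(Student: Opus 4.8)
The plan is to reduce the statement on the circle to the two lemmas already proved on the two intervals, by a matching argument entirely parallel to the one used in the preceding lemma. Fix $x\in(\mu_1,\mu_0-\mu_1)$ and $u\in\bbc^k$. A solution $v$ to $D_1^*D_1(v)=\delta_x u$ on the circle must be a solution of the homogeneous equation away from $x$, continuous everywhere, smooth except for a single jump of size $u$ in the first derivative at $x$; it must also satisfy the matching conditions at the two boundary points $\pm\mu_1$ built into the function space $\tilde L^2_2$. So we are looking for such a $v$, and we must show it exists and is unique.

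First I would set up the pieces. On the small interval $(\mu_1,\mu_0-\mu_1)$ containing $x$, the fourth lemma above (the ``small side'' Green's function lemma) says that for any prescribed boundary values $u_+$ at $\mu_1$ and $u_-$ at $\mu_0-\mu_1$ there is a \emph{unique} solution to $D_1^*D_1(v)=\delta_x u$ on that interval realizing those boundary values. This builds in the jump at $x$ automatically. On the large interval $(-\mu_1,\mu_1)$ the equation is homogeneous, and the second lemma above gives, for prescribed boundary values $w_+$ at $\mu_1$ and $w_-$ at $-\mu_1$, a unique homogeneous solution. (Here one must remember that $-\mu_1$ and $\mu_0-\mu_1$ are the same point on the circle, and that the bundle $K$ has rank $k$ on the small side and $k+j$ on the large side, with the maps $\iota,\pi$ gluing them; the boundary conditions from Section \ref{sec:gaugefields}-B, Case 1, are precisely what make ``matching across $\pm\mu_1$'' a well-posed finite-dimensional linear condition.)

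Next I would assemble the global solution by a linear-algebra matching at the two boundary points. A global $v$ is determined by its boundary data: from the small side, $v(\mu_1)=u_+$, $v(\mu_0-\mu_1)=u_-$ with the corresponding one-sided derivatives determined by the small-side Green's function; from the large side, $v(\pm\mu_1)$ and one-sided derivatives determined by the large-side solution. Continuity plus the $\tilde L^2_2$ matching conditions (the $\pi$-compatibility of values and of $d_z$) at $\mu_1$ and at $-\mu_1=\mu_0-\mu_1$ give a system of linear equations on the finite-dimensional data $(u_+,u_-)$ together with the free boundary data on the large side. The point is then to show this linear map is invertible. Invertibility is equivalent to injectivity, and injectivity is exactly where the convexity estimate of Proposition \ref{prop:D1D1} enters: if $v$ and $v'$ were two solutions of $D_1^*D_1(v)=\delta_x u$ on the circle, their difference would be a regular (everywhere-smooth, as the jumps at $x$ cancel and the boundary conditions at $\pm\mu_1$ are homogeneous) solution of $D_1^*D_1=0$ on the whole circle; by Equation (\ref{eqn:convexity}) the function $\|v-v'\|^2$ is convex on the circle, hence constant, hence $(d_z+(T_0-i\mu_0x_0))(v-v')=0$ and $(T_j-ix_j)(v-v')=0$ for all $j$, which (by the irreducibility condition) forces $v-v'=0$. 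That gives uniqueness, and since we are dealing with a square linear system between finite-dimensional spaces, uniqueness yields existence.

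The main obstacle is purely bookkeeping rather than analytic: one must set up the matching map at $\pm\mu_1$ honestly, keeping straight the rank jump of $K$, the roles of $\iota$ and $\pi$, and the two conditions ($\pi$ on values and $\pi$ on $d_z$) that define $\tilde L^2_2$, and then check that ``the difference of two solutions is a \emph{genuine} global element of $\ker(D_1^*D_1)$ on the circle'' — i.e. that it lies in the function space to which Proposition \ref{prop:D1D1} applies, so that the convexity argument is legitimate. Once that is in place, the convexity estimate does all the real work, exactly as in the two preceding lemmas, and the circle case follows by gluing.
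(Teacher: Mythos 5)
Your proposal is correct and follows essentially the same route as the paper: patch together the solutions from the two preceding lemmas by varying the finite-dimensional boundary data at $\pm\mu_1$, then show the resulting affine map to the space of derivative jumps is injective via the convexity estimate of Proposition \ref{prop:D1D1}, hence surjective. One small bookkeeping remark: once you impose continuity at $\pm\mu_1$ (via $\iota,\pi$), the ``free boundary data on the large side'' is already determined by $(u_+,u_-)$ and the uniqueness in the large-interval lemma, so the domain of the matching map is exactly the $2k$-dimensional affine space parametrised by $(u_+,u_-)$, just as the paper states.
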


\begin{proof}Again, one has, varying the boundary values at
$\pm\mu_1$, an affine $2k$ dimensional space of continuous solutions,
with the right jump in derivatives at $x$, and possibly extra
jumps in the derivatives at $\pm\mu_1$. One considers the affine map
from this space to $\bbc^k\oplus\bbc^k$ taking the jumps in the
derivatives at $\pm\mu_1$; it has to be injective (convexity) and so
is surjective, allowing us to match the derivatives.\end{proof}

By the usual ellipticity argument, the Green's function solution to
$D_1^*D_1(v)=\delta_xu$ is smooth away from $x$.  By convexity, $\|v\|$
and $d_z\|v\|$ are increasing everywhere away from $x$.  Since we are
on a circle, both must attain their maximum at $x$.
In addition, integrating over the circle, we get
\begin{align*}
|u|^2&\ge  d_z<v,v>(x)_- -d_z<v,v>(x)_+ \\
     &= \int d_z^2<v,v> \\
     &= \int 2\|(\nabla_0-i\mu_0x_0)v\|^2+ 2\sum_j \| (T_j-ix_j)v\|^2\\
     &\ge C\int \|(\nabla_0-i\mu_0x_0)v\|^2+\|v\|^2.
 \end{align*}
The last inequality follows from the fact that the solution to Nahm's equation is
irreducible. This $L^2_1$ bound
on $v$ ensures continuity, with $\|v\|_{L^\infty}\leq C \|v\|_{L^2_1}$, and hence
the $L^\infty$ norm of the Green's function is  bounded by a constant
times the norm of $u$.

{\bf The case $j=0$}.  We can again use a cohomological version of
the Dirac operator, but we must modify the function spaces a little bit to
account for the jump at $\mu_1$ in the solution to Nahm's equations given by
$\Delta_+ (A(\zeta)) = (\alpha_{+,0}+\alpha_{+,1}\zeta)
                       (\bar\alpha_{+,1}^T- \bar\alpha_{+,0}^T\zeta)$ for
suitable column vectors $\alpha_{+,i}$, and the similar jump
 $\Delta_- (A(\zeta)) = (\alpha_{-,0}+ \alpha_{-,1}\zeta)(
\bar\alpha_{-,1}^T-  \bar\alpha_{-,0}^T\zeta)$ at $-\mu_1$.
The corresponding jumps for the matrices $T_i$ are
\begin{align*}
\Delta_\pm(T_3) &= \frac{i}{2} (\alpha_{\pm,1}   \bar\alpha_{\pm,1}^T -
\alpha_{\pm,0}   \bar\alpha_{\pm,0}^T), \\
\Delta_\pm(T_1+iT_2) &= \alpha_{\pm,0}   \bar\alpha_{\pm,1}^T.
\end{align*}

We want solutions to the Dirac equations $D_1^*(s_1, s_2)= D_2(s_1, s_2)=0$ with jumps
that are multiples of
$v_\pm := (i\alpha_{\pm,1},\alpha_{\pm,0})$ at $\pm\mu_1$, so  we modify the
 function spaces for  Complex (\ref{Dirac-complex}):
\begin{itemize}
\item $\tilde L^2_2$ is the space of  sections $s$ of $K$ that  are
  $L^2_2$ over each interval, continuous at the jumping points, and
  with a jump discontinuity $\Delta_\pm (d_zs) = \frac{1}{2}
  (\alpha_{\pm,1}   \bar\alpha_{\pm,1}^T +\alpha_{\pm,0}\bar\alpha_{\pm,0}^T)s(\pm\mu_1)$
  in derivatives at $\pm\mu_1$;
\item $(\tilde L^2_1)^{\oplus 2}$ is the space of sections $(s_1, s_2)$ of
  $K\oplus K$ that are $L^2_1$ over the intervals, but with a jump
  discontinuity $\Delta_\pm(s_1, s_2) = c_\pm v_\pm$ at $\pm\mu_1$;
\item $\tilde L^2$ is the space of $L^2$ sections of $K$.
\end{itemize}

\begin{proposition}For solutions to Nahm's equations and for the complex defined using
those function spaces just defined, Proposition \ref{prop:D1D1} holds,
and furthermore, at the jump points,
\begin{equation}\Delta(d_z(<v,v>)) =  <\Delta(d_z v),v> + <v,
  \Delta(d_z v)> =  (<\bar \alpha_{\pm,0}^Tv,\bar \alpha_{\pm,0}^Tv >+
  <\bar \alpha_{\pm,1}^Tv,\bar \alpha_{\pm,1}^Tv >)\ge 0
\end{equation}
for $v\ne 0 $ satisfying  $D_1^*D_1(v)=0$; hence, if   $v$ is a
regular solution vanishing at two points, then $v=0$.
\end{proposition}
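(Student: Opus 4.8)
The plan is to follow the template of Proposition~\ref{prop:D1D1} for the case $j>0$, separating the two things that change: the identities on the interior of the two intervals, which are untouched, and the behaviour at $\pm\mu_1$, which is governed entirely by the rank-one jump formulas for the $T_i$ and by the kink built into the definition of $\tilde L^2_2$.

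First I would dispose of the interior. Away from $\pm\mu_1$ the operators $D_1$, $D_2$ and the fibre metrics are exactly as in Proposition~\ref{prop:D1D1}, so both $D_2D_1=0$ for solutions to Nahm's equations and the Weitzenbock identity $D_1^*D_1=-\bigl(d_z+(T_0-i\mu_0x_0)\bigr)^2-\sum_{j=1}^3(T_j-ix_j)^2$ carry over verbatim on each open interval; in particular, for $v$ with $D_1^*D_1v=0$ the pointwise relation (\ref{eqn:convexity}) holds on the interior of each interval, showing $d_z^2\|v\|^2\ge0$ there.

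The work is at $\pm\mu_1$, and this is the main obstacle: checking that the three modified function spaces are internally consistent. A section $s\in\tilde L^2_2$ is continuous with prescribed kink $\Delta_\pm(d_zs)=\tfrac12\bigl(\alpha_{\pm,1}\bar\alpha_{\pm,1}^T+\alpha_{\pm,0}\bar\alpha_{\pm,0}^T\bigr)s(\pm\mu_1)$; combining this with $\Delta_\pm(\beta)=\alpha_{\pm,0}\bar\alpha_{\pm,1}^T$ and $\Delta_\pm(\alpha)=-i\Delta_\pm(T_3)=\tfrac12\bigl(\alpha_{\pm,1}\bar\alpha_{\pm,1}^T-\alpha_{\pm,0}\bar\alpha_{\pm,0}^T\bigr)$ (recall $T_0\equiv0$ in this gauge), one computes that the jump of $D_1s$ collapses to a scalar multiple of $v_\pm$, so $D_1$ maps $\tilde L^2_2$ into $(\tilde L^2_1)^{\oplus 2}$, and conversely the requirement $D_1v\in(\tilde L^2_1)^{\oplus 2}$ forces exactly the prescribed kink on $v$. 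The same rank-one computation shows $D_2(c_\pm v_\pm)$ is continuous at $\pm\mu_1$, so $D_2$ maps $(\tilde L^2_1)^{\oplus 2}$ into $\tilde L^2$; and an integration by parts across $\pm\mu_1$ shows that, given these jump conditions, the boundary contributions produced by the $d_z$ in $D_1$ cancel, so the $D_1^*$ built from these spaces is the genuine formal adjoint and the interior identity above really is $D_1^*D_1$. None of this is conceptually deep, but it is where the otherwise opaque form of the kink in $\tilde L^2_2$ gets pinned down.

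Finally I would compute the jump of $d_z\langle v,v\rangle$ and close the argument. Since $v$ is continuous at $\pm\mu_1$, $\Delta\bigl(d_z\langle v,v\rangle\bigr)=\langle\Delta(d_zv),v\rangle+\langle v,\Delta(d_zv)\rangle$; substituting the prescribed $\Delta_\pm(d_zv)$ and using $\langle\alpha_{\pm,i}\bar\alpha_{\pm,i}^Tv,v\rangle=|\bar\alpha_{\pm,i}^Tv|^2$ yields
\[ \Delta\bigl(d_z\langle v,v\rangle\bigr)=\langle\bar\alpha_{\pm,0}^Tv,\bar\alpha_{\pm,0}^Tv\rangle+\langle\bar\alpha_{\pm,1}^Tv,\bar\alpha_{\pm,1}^Tv\rangle\ \ge\ 0, \]
a sum of squares — which is exactly why that kink was built into $\tilde L^2_2$. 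Combining this with the interior convexity, the distributional second derivative of the continuous function $z\mapsto\|v\|^2$ on the circle is a non-negative measure (a non-negative density on the intervals plus non-negative point masses at $\pm\mu_1$), so $\|v\|^2$ is convex on the circle; a continuous periodic convex function is constant, so if $v$ vanishes at two points — indeed at a single point — then $\|v\|^2\equiv0$ and hence $v\equiv0$.
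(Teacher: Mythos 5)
Your proof is correct and is the natural one; the paper states this proposition without an explicit proof, and your argument fills the gap in exactly the spirit the paper intends (Weitzenbock identity on the interior, jump computation at the boundary, convexity to conclude). The central computation — that $v$ continuous at $\pm\mu_1$ gives $\Delta(d_z\langle v,v\rangle)=\langle\Delta(d_zv),v\rangle+\langle v,\Delta(d_zv)\rangle$, and substituting the prescribed kink $\Delta_\pm(d_zv)=\tfrac12(\alpha_{\pm,1}\bar\alpha_{\pm,1}^T+\alpha_{\pm,0}\bar\alpha_{\pm,0}^T)v$ and using that each $\alpha\bar\alpha^T$ is self-adjoint yields $|\bar\alpha_{\pm,0}^Tv|^2+|\bar\alpha_{\pm,1}^Tv|^2\ge 0$ — is exactly right. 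Your observation that the distributional second derivative of $\|v\|^2$ on $S^1$ is a non-negative measure with total mass zero, hence zero, hence $\|v\|^2$ constant, is a slight strengthening of the stated conclusion (vanishing at a single point already forces $v\equiv 0$); stated more carefully: if $f$ is continuous on $S^1$ with $f''\ge 0$ distributionally, then $\int_{S^1}f''=0$ forces $f''\equiv 0$, so $f$ is affine and, being periodic, constant. One minor remark on the consistency check: tracking signs, $\Delta(D_1 s)=(\bar\alpha_{\pm,1}^T s)(i\alpha_{\pm,1},-\alpha_{\pm,0})$, which is a multiple of $(i\alpha_{\pm,1},-\alpha_{\pm,0})$ rather than of the paper's $v_\pm=(i\alpha_{\pm,1},\alpha_{\pm,0})$; this looks like a sign typo in the paper's definition of $v_\pm$ and does not affect your argument.
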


Proceeding as for $j>0$, one can then build a Green's function. Using
the Green function in both cases $j>0$ and $j=0$, we have the analog of
Proposition \ref{prop:Dirac-cohomology}.

\begin{proposition}
The cohomology $\ker(D_2)/\Im(D_1)$ of Complex (\ref{Dirac-complex})
is isomorphic to the space of $L^2$ solutions to the Dirac equation
$D_2(v)=0=D_1^*(v)$, which are the harmonic representatives in the
cohomology classes. As such, they have minimal norm in the class.
\end{proposition}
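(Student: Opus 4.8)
The plan is to run the standard Hodge-theoretic argument for the three-term complex (\ref{Dirac-complex}), with the Green's function for $D_1^*D_1$ built in the preceding lemmas playing the role of the elliptic parametrix one would have on a compact manifold. First, $D_2D_1=0$ is exactly the content of Proposition \ref{prop:D1D1} (and its $j=0$ counterpart), so the cohomology at the middle term is defined. Call $v\in(\tilde L^2_1)^{\oplus 2}$ \emph{harmonic} if $D_2v=0$ and $D_1^*v=0$; such a $v$ represents a cohomology class, and the assertion to prove is that the map sending a class to its harmonic representative is a well-defined isomorphism onto the space of $L^2$ solutions of $D_2v=0=D_1^*v$, and that this representative is the minimal-norm element of its class.

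For existence of a harmonic representative, given $v$ with $D_2v=0$ I would solve the Laplace-type equation $D_1^*D_1 s = D_1^*v$ for $s\in\tilde L^2_2$; equivalently, $s$ is the minimizer of $t\mapsto\|v-D_1t\|^2$ and satisfies the normal equation $D_1^*(v-D_1s)=0$. The joint content of the Green's function lemmas in both cases $j>0$ and $j=0$, together with the $L^\infty$ bound on the Green's function derived there from the convexity inequality (\ref{eqn:convexity}) and from irreducibility, is precisely that $D_1^*D_1$ is invertible from $\tilde L^2_2$ to $\tilde L^2$ with the stated jump conditions at $\pm\mu_1$; one checks that the right-hand side $D_1^*v$ indeed lands in $\tilde L^2$, the function spaces having been arranged so that $D_1$ and $D_1^*$ map between them compatibly (including the rank-one jump (\ref{jump}) in the $j=0$ case). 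Setting $s:=(D_1^*D_1)^{-1}(D_1^*v)$ and $w:=v-D_1s$ gives $D_1^*w=0$ and $D_2w=D_2v-D_2D_1s=0$, so $w$ is a harmonic representative; and $w\in L^2$ since $v\in L^2$ and $D_1s\in L^2$ by the regularity of the Green's function solution, so these harmonic elements are exactly the $L^2$ solutions of the Dirac system in the statement.

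For uniqueness and minimality, the key point is that integration by parts over the circle produces no boundary or jump contribution, precisely because of the way the discontinuities were built into $\tilde L^2_2$, $(\tilde L^2_1)^{\oplus 2}$ and $\tilde L^2$ (the same cancellation responsible for the inequality $\ge 0$ in Proposition \ref{prop:D1D1}). Hence $\langle w, D_1t\rangle=\langle D_1^*w,t\rangle=0$ for all $t\in\tilde L^2_2$, so $w\perp\Im(D_1)$; then $\|w+D_1t\|^2=\|w\|^2+\|D_1t\|^2\ge\|w\|^2$, with equality only when $D_1t=0$, which gives minimality. For uniqueness, if $w$ and $w'$ are harmonic with $w-w'=D_1s$, then $D_1^*D_1s=0$, and the convexity argument of Proposition \ref{prop:D1D1} forces $D_1s=0$, so $w=w'$; this simultaneously shows the representative map is injective, and surjectivity is immediate since any harmonic element is closed and represents its own class.

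The main obstacle is the bookkeeping at the two singular points $\pm\mu_1$: one must verify that $D_1$ carries $\tilde L^2_2$ into $(\tilde L^2_1)^{\oplus 2}$ and $D_1^*$ carries $(\tilde L^2_1)^{\oplus 2}$ into $\tilde L^2$ with exactly the prescribed jumps, so that the Green's function genuinely inverts $D_1^*D_1$ on the nose and the integration by parts leaves no residual terms coming from the poles of the $T_i$ (for $j>0$) or from the rank-one jump of $A(\zeta)$ (for $j=0$). Once these compatibilities are checked, the remainder is the formal Hodge argument; all the analytic work has already been absorbed into the Green's function construction and the $L^\infty$ estimate established just before this proposition.
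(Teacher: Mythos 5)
Your proposal is correct and takes essentially the same approach as the paper's proof: use the Green's function for $D_1^*D_1$ to project onto the harmonic representative, and invoke the convexity inequality to show that the only Dirac solution lying in $\Im(D_1)$ is zero. The paper's version is much terser (two sentences), leaves the function-space compatibility checks and the norm-minimality claim implicit, and has a minor sign slip (it writes the harmonic representative as $u+D_1(v)$ where your $u-D_1(v)$ is the correct expression given the normal equation $D_1^*D_1(v)=D_1^*(u)$); your write-up spells all of this out correctly.
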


\begin{proof}The cohomology class of a solution to the Dirac equation is non zero
as the convexity property ensures $D_1^*D_1(v)=0$ over the whole circle implies $v=0$.
On the other hand, from a solution to
$D_2(u)=0$, one uses the Green's function to solve $D_1^*D_1(v)=D_1^*(u)$
and gets a harmonic representative $u+D_1(v)$.\end{proof}



We can now turn to identifying the spectral data of
a solution to Nahm's equations with the data of the caloron it
induces, beginning with the spectral curves.
Suppose we are in the generic situation
of spectral curves that are reduced and with no common
components. Let's work over $\zeta = 0$, and
suppose,  that the intersection of $\zeta =
0 $ with the curves is generic, so  distinct points of
multiplicity one.

We turn first to analysing the kernel of the matrices $\beta(z)-iw =
T_1(z)+iT_2(z) - ix_1 +x_2$, at points $w$ where $\det(\beta(z)-iw)=
0$ so that we are on the Nahm spectral curve, let us say for the
interval $(-\mu_1,\mu_1)$. We note that since $[\beta, d_z+\alpha]=0$,
the spectrum is
constant along the intervals $(-\mu_1, \mu_1)$ or
$(\mu_1,\mu_0-\mu_1)$, and indeed, if $(\beta(z_0)-iw )f_0=0$,
solving $(d_z+\alpha)f = 0$, $f(z_0) = f_0$ gives $(\beta(z)-iw) f = 0$
over the interval.

For such an $f$ and a bump function $\rho$, setting
\begin{equation}
(s_1, s_2) = (\rho f, 0)
\end {equation}
defines a cohomology class in Complex (\ref{Dirac-complex}). This class is
zero only when the integral of
$\rho$ is zero over the interval as then for some $\sigma$, we have
$(d_z + \alpha)(\sigma f) = ((d_z\sigma)f, 0) =(\rho f,0)$.
Note that we can use a boundary and do $\rho\mapsto \rho +d_z\sigma$
to place the bump
anywhere on the interval. This fact is quite useful.

Given this class at a fixed $y = 0$, one can
extend it to other $y$ by taking
\begin{equation}\label{extension}
(s_1, s_2) = (e^{iyz}\rho f, 0).
\end {equation}

We can think of this family of cohomology classes as a section
over $\SR$ of the bundle that Nahm's construction produces from the
solution to Nahm's equations.
This section has exponential decay as
$x_3\rightarrow\pm\infty$. To see this, we
exploit the fact that we can move the bump function $\rho$
around to give different representatives
supported  on $(-\mu_1,\mu_1)$ for the class
$(s_1, s_2) = (e^{iyz}\rho f, 0) = (e^{-x_3z}e^{i\mu_0x_0}\rho f)$.
Recall that the norm of any representative in the cohomology
class is at least the norm of the harmonic (Dirac) representative. As we
take $x_3$ to $+\infty$, we move the bump towards $\mu_1$, giving up to a
polynomial the bound $exp (-x_3\mu_1)$ on the $L^2$ norm of the harmonic
representative; similarly, as we go to $-\infty$, we move the bump towards
$-\mu_1$, and get a bound of the form $exp (x_3\mu_1)$.

The condition on a  section of $E$ over $\C^*=\{e^{i\mu_0x_0 + x_3}\}$
to extend over $0, \infty$ is given in
\cite[Sec. 3]{garland-murray}, and amounts to a growth condition. More
generally, there are a whole family of growth conditions, giving us not only
our bundle $E$ but a family of associated sheaves $E_{(p,q,0)
(p',q',\infty)}$,  the sheaf of sections
of $E$ over ${\cal T}$ with poles of order $p$ at ${\cal T}_0$, with
leading term in $E^0_q$,  and poles of order $p'$ at ${\cal T}_\infty$, with
leading term in $E^\infty_{q'}$.
The growth condition that $E_{(0,1,0) (0,1,\infty)}$ corresponds to is that
of  growth bounded (up to polynomial factors) by $exp (-x_3\mu_1)$ as
$x_3\rightarrow\infty$, and  by $exp (x_3\mu_1)$ as $x_3\rightarrow
-\infty$; for $E_{(0,0,0) (1,0,\infty)}$, it is growth bounded (up to
polynomial factors) by $exp (x_3\mu_1)$ as $x_3\rightarrow\infty$, and  by
$exp(x_3(\mu_0-\mu_1))$ as $x_3\rightarrow -\infty$. Let us consider the
case of  $E_{(0,1,0) (0,1,\infty)}$. Our bounds for the section
(\ref{extension}) tell us that it is a (holomorphic) has a global section of
$E_{(0,1,0) (0,1,\infty)}$ on the $\bbp^1$ above
$(\eta,\zeta) = (w,0)$.

The sheaves $E_{(p,q,0) (p',q',\infty)}$ have degree  $2(p+p'-2) +q+q'$ on
the fibers of the projections to $T\bbp1$, from our identifications of them
given above. In particular,
the degree of $E_{(0,1,0) (0,1,\infty)}$ is $-2$.
Now, starting from a point of the Nahm spectral curve, we
have produced a global section of $E_{(0,1,0) (0,1,\infty)}$ over the
corresponding Riemann sphere; the Riemann--Roch theorem then tells us that
the first cohomology is also non-zero, so that we are in the support of
$R^1\tilde\pi_*(E_{(0,1,0) (0,1,\infty)})$, which is exactly the caloron spectral curve
$S_1$. As both $S_1$ are of the same degree and unreduced, they are identical.

Proceeding similarly for the spectral Nahm spectral curve $S_0$, we
obtain a global section of $E_{(0,0,0) (1,0,\infty)}$ over the corresponding
twistor line, and so show that the line lies in the support of
$R^1\tilde\pi_*(E_{(0,0,0) (1,0,\infty)})$, which is exactly the caloron spectral curve $S_0$.

We have now identified the spectral curves for the solutions to Nahm's
equations and the caloron that it produces. We now note that, in addition to
the bundle $V$ over $\SR$ that Nahm's construction produces,
there is another bundle, $\hat V$, obtained by taking the spectral data
associated to the solution of Nahm's equations, and feeding it in to the
reconstruction of a caloron from its spectral data.
Indeed, from a solution to Nahm's equations, we have
seen that we can can define sheaves ${\cal L}_z$; the fact that they
satisfy the boundary conditions  by Proposition \ref{nahm-boundary} tells us
we can reconstruct a bundle $F$ of infinite rank over $T\PP^1$ using
Sequence (\ref{sequence}). We now identify the bundles $V$ and $\hat V$.

Starting from a caloron, we obtained $F$ as the pushdown from the
twistor space ${\cal T}$ of a rank 2 bundle $E$. If $w$ is a fibre
coordinate on ${\cal T}\rightarrow T\bbp^1$, we saw that it induced an
automorphism $W$ of $F$, such that, at $ w= w_0$, $E_{w_0}\simeq
F/\Im(W-w_0)$. In Sequence
(\ref{sequence}), the shift map $W$ identifies the $n$th entry in the
middle column with the $n+2$th
(e.g. $L^{(p-1)\mu_0+\mu_1}(2k+j)\otimes{\cal I}_{S_{01}}$ with
$L^{p\mu_0-\mu_1}(2k+j)\otimes{\cal I}_{S_{10}}$) and similarly in the
right hand column. If $w = w(\zeta)$ is the equation of a twistor line
$L$ in ${\cal T}$, then quotienting by $W-w(\zeta)$ in
Sequence (\ref{sequence})
gives for $E$ over the line (recalling that $L^\mu_0$ is trivial over
the line)
\begin{equation}\label{sequence-E}
\begin{matrix}
&& L^{\mu_1}(2k+j)\otimes{\cal I}_{S_{01}}&&L^{\mu_1}(2k+j)[-S_{01}]|_{S_0}&&\\
E&\hookrightarrow&\oplus&\rightarrow&\oplus&\rightarrow&0.\\
&&L^{-\mu_1}(2k+j)\otimes{\cal I}_{S_{10}}&&L^{ -\mu_1}(2k+j)[-S_{10}]|_{S_1}\\
\end{matrix}
\end{equation}

{}From the twistor point of view, the space of global sections
of $E$ along the real line $L_x$ in $\mathcal{T}$
correspond to the fiber of the caloron bundle $\hat V$ over
the corresponding $x\in\SR$.
On the other hand, from the
Nahm point of view, $V_x$ correspond to
$\ker D_x^*$, and so to the cohomology
$\ker(D_2)/\Im(D_1)$ in Complex (\ref{Dirac-complex}).

To identify  $V$ and $\hat V$, we follow
closely \cite[pp. 80--84]{hurtubiseMurray}, so we
simply summarise the ideas.  The identification is made for the $x$ whose lines
$L_x$ which do not intersect  $S_{0}\cap S_{1}$;
to do it on this dense set is sufficient, as one is also identifying the connections.

Let us denote the sequence of sections
corresponding to (\ref{sequence-E}) by
\begin{equation}
0\rightarrow H^0(L_x,E)\rightarrow U_{\mu_1}\oplus U_{-\mu_1}\rightarrow
W_0\oplus W_1.
\end{equation}
What \cite{hurtubiseMurray} does is to identify $W_0$ and $W_1$ with 
solutions to $D_x^*(s) = 0$  on the interval $(\mu_1, \mu_0-\mu_1)$,
and $(-\mu_1,  \mu_1)$ respectively, and $U_{\mu_1}$ and $U_{-\mu_1}$
with $L^2$-solutions on a neighborhood of $\mu_1$ and $-\mu_1$
respectively.
The kernel $H^0(L_x,E)$ then
gets identified with global
$L^2$ solutions on the circle, which are elements of $V_x$.
Since from the twistor point of view
$\hat V_x\simeq H^0(L_x,E)$, we are done.
The case $j=0$ is
similar.

Having identified the bundles, one then wants to ensure that the
connections defined in both case are the same, and we do so again according to
\cite{hurtubiseMurray}. It  suffices to do
this identification along one null plane through the point $x$, as changing
coordinates will do the rest, and we can suppose that $x$ is
the origin, so $\eta=0$ in $T\PP^1$.
Let us choose the plane corresponding to $\zeta = 0$. From
the twistor point of view, parallel sections along this null plane
correspond to sections on the corresponding family of lines with fixed
values at $\zeta$ = 0. {}From the Nahm point of view, a parallel section
along this null plane is a family of
cohomology classes represented by cocycles $s=(s_1, s_2)$
in $\ker(D_2)$ that satisfy
$(\frac{\partial}{\partial x_1} +i\frac{\partial}{\partial x_2})s = 0,
(\frac{\partial}{\partial x_0} -i\frac{\partial}{\partial x_3})s=0$, modulo
coboundaries.
In particular,  the derivatives of $s_2$ are of the form
$(T_1+iT_2)\phi = A_0\phi$ for a suitable
$\phi$ depending on $x$. Following through the identifications of
\cite[pp. 80--84]{hurtubiseMurray} tells us that the derivatives 
$\nabla \hat s$
in suitable trivialisations of the corresponding
sections $\hat s$ on the family of lines are of the form 
$\nabla \hat s = A_0 \psi$
for the corresponding section $\psi$.
However, the defining relation for $A(\zeta)$ 
is $(\eta\bbi-A_0-\zeta A_1 -\zeta^2A_2) \phi = 0$. In
particular, that at $(\zeta, \eta) = 0$, $A_0\phi$ vanishes as a
function, and so $\hat s$ is parallel, as desired.

\subsection{From Nahm to caloron to Nahm to caloron.}
Let us first place ourselves in the generic situation.
We have seen first that the family of generic calorons maps continuously and
injectively into the family of generic spectral data; secondly, that generic
spectral data and generic solutions to Nahm's equations are equivalent;
thirdly, that starting from a caloron and producing a solution to Nahm's
equations then taking the spectral data of both objects gives the same
result, and fourthly  that the generic solution to Nahm's equation gives the
same caloron, whether you pass through the Nahm transform or through the
spectral data via the twistor construction. Taken together, those facts tell us
that the map from caloron to spectral data is a bijection,
that all three sets of data are equivalent, and that the six transforms
between them are pairwise inverses of each other.

Let us now leave the generic set. If we now take an arbitrary solution $T$
to Nahm's equations,
again satisfying all the conditions, we can fit it
into a continuous family $T(t)$ with $T(0) = T$ with $T(t)$ generic
for $t\ne0$; that it can be done follows from our description of moduli
given in Section \ref{moduli}.
The Nahm transform of this family is a continuous family $C(t)$ of calorons,
which in turn produces a continuous family of solutions $\tilde T(t)$ to
Nahm's equations. We do not know a priori whether the
boundary and symmetry conditions  are satisfied at $t=0$. However, for $t\ne
0$, $\tilde T(t) = T(t)$, and so $\tilde T(0) = T(0)$, and we are
done; the transform Nahm to caloron to Nahm is the identity.

If the caloron $C$ lies in the closure of the set of generic
calorons (presumably all calorons do, but it needs to be
proved, perhaps using the methods of Taubes
(\cite{TaubesPathConnectedYM}) to show that the moduli space is
connected), we again fit it as $C(0)$ into a family $C(t)$ of
calorons, with $C(t)$ generic for $t\ne 0$, all of same charge.  The Nahm
transform produces a family $T(t)$ of solutions to Nahm's equations, with
$T(t)$ satisfying all the conditions for $t\ne 0$. We also get a
family $S(t)$ of spectral data, which for all $t$ corresponds
to both $C(t)$ and $T(t)$.  We need to show that $T(0)$ satisfy
all the conditions.

Taking a limit, it is fairly clear that  the symmetry condition is
satisfied also for $t=0$.
 The boundary conditions are less obvious. From the small side,
 there is no problem, as the vanishing theorem \ref{vanishing-theorem}
 holds at $C(0)$, and so the solutions to Nahm's equations are
 continuous at the boundary of the interval.
{}From the large side, what saves us is the rigidity of representations
 into $SU(2)$; indeed,
the polar parts of the solutions are given by representations, and so
are fixed, in a suitable family of unitary gauges, hence preserved in a
limit. The  process, given in Section
\ref{sec:boundary-conditions}, of
passing to a constant gauge by solving $ds/dz +A_+(z) s = 0$ applies in the
limit near the singular points.  We have in the limit the same type of
transformations relating the basis of $K$ in which one has the solution to
Nahm's equations to  the continuous basis obtained from lifting up and
pushing down  as to obtain
Sequence (\ref{pushdown-sequence}), with the same process of producing endomorphisms
$\underline{A}(z,\zeta)(t)$, giving us again in the limit a continuous
endomorphism of a bundle $V$ over $T\bbp^1\times S^1$, with $V$
in fact lifted from $\bbp^1\times S^1$. The restrictions $V(\pm\mu_1)$
(on the large side) are of fixed type $V^1_{\mu_1} = {\cal
  O}^k\oplus V_{j-1,-j+1}$ for all $t$. The summand
$V_{j-1,-j+1}$ corresponds to the subsheaf ${\cal O}(j)$ of the sheaf
$F/(F^0_{p,1}+F^\infty_{-p,1})\otimes L^{\pm\mu_1}$, which exists in
the limit.  The polar part $\mathcal{C}(p(\zeta, t))$ of
Equation (\ref{normal-form-schem}), mapping $V_{j-1,-j+1}$ to itself,  has
limit $\mathcal{C}(p(\zeta, 0))$. By Equation
(\ref{determinant}), this latter limit is
determined by the spectral curves, and so is well defined. The other
summand $\OO^k$ is also well defined in the limit, as it is the
piece transferred from the small side, which is still ${\cal O}^k$
in the limit because of the vanishing theorem. The off-diagonal
vanishing as one goes back to the trivialisation for Nahm's equations  is
simply a consequence of the polar behaviour of solutions to $ds/dz +A_+(z) s
= 0$, and of the normal form for $\underline{A}(z,\zeta)(t)$ as in
Equation (\ref{normal-form}). In short, the
limit has exactly the same normal form, and so the same boundary
behaviour.

Finally, there is the question of irreducibility. The
reducible solutions correspond to calorons for which charge has
bubbled off; as our family has constant charge, this is precluded. The
limit solutions satisfies all the conditions necessary to produce a
caloron by the opposite Nahm transform; as the transform is involutive
on the generic member of the family, it is also involutive in the
limit,  and so the circle closes.

In short one has the following theorem.

\begin{theorem}
A) There is an equivalence between
\begin{enumerate}
\item  Generic calorons of charge $(k,j)$;
\item Generic solutions to Nahm's equations  on the circle satisfying
the conditions of Section \ref{sec:gaugefields}.B;
\item Generic spectral data.
\end{enumerate}

The equivalences of 1. and 2. are given in both
directions by the  Nahm transform.

B) The Nahm transforms give equivalences between
\begin{enumerate}
\item  Calorons of charge $(k,j)$, in the closure of the generic set;
\item Solutions to Nahm's equations  on the circle satisfying the
conditions of Section \ref{sec:gaugefields}.B.
\end{enumerate}
 \end{theorem}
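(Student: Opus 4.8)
The plan is to assemble the results of Sections \ref{sec:Kac-Moody}--\ref{sec:closingcircle} into a diagram chase for part A), and then to propagate the equivalence off the generic locus by a deformation argument for part B).

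For A), I would arrange the three sets of data --- generic calorons, generic solutions to Nahm's equations, generic spectral data --- as the vertices of a triangle and record the four facts already established: (i) from the twistor/Garland--Murray construction of Section \ref{sec:Kac-Moody}, the assignment caloron $\mapsto$ spectral data is well defined, continuous and injective, since a caloron is recovered from its principal part data and hence, when the curves are generic, from its spectral data; (ii) by Proposition \ref{nahm-boundary}, generic spectral data and generic solutions to Nahm's equations on the circle are equivalent, with explicit mutually inverse maps; (iii) by the Dirac-complex analysis of Section \ref{sec:closingcircle}, a caloron and the solution to Nahm's equations produced from it by the Nahm transform have the same spectral curves and the same sheaves over them, hence the same spectral data; (iv) by the identification of the bundles $V$ and $\hat V$ and of their connections in Section \ref{sec:closingcircle}, a generic solution to Nahm's equations yields the same caloron whether one applies the Nahm transform directly or passes through its spectral data and the twistor reconstruction. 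Composing these maps around the triangle, each composite is the identity on a generic set; hence every edge is a bijection, the six transforms are pairwise inverse, the caloron $\mapsto$ spectral data map is onto generic spectral data, and in particular the Nahm transforms in both directions between 1.\ and 2.\ are mutually inverse. This proves A).

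For B), the idea is continuity. Given a solution $T$ to Nahm's equations satisfying all the conditions of Section \ref{sec:gaugefields}-B, I would fit it as $T(0)$ into a continuous family $T(t)$ with $T(t)$ generic for $t\neq 0$, possible by the moduli description of Section \ref{moduli}. Its Nahm transform is a continuous family of calorons $C(t)$, whose Nahm transform is in turn a continuous family $\tilde T(t)$ of solutions to Nahm's equations; for $t\neq 0$, part A) gives $\tilde T(t)=T(t)$, so by continuity $\tilde T(0)=T$, and Nahm-to-caloron-to-Nahm is the identity. Conversely, for a caloron $C$ in the closure of the generic set, embed $C=C(0)$ in a family $C(t)$, of the same charge, generic for $t\neq 0$; the Nahm transform gives a family $T(t)$ of solutions to Nahm's equations, together with a family $S(t)$ of spectral data corresponding to both $C(t)$ and $T(t)$, and one must check that $T(0)$ still satisfies the boundary, symmetry and irreducibility conditions. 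The symmetry condition passes to the limit directly; on the small side the boundary condition is preserved because the vanishing theorem (Proposition \ref{vanishing-theorem}) holds at $C(0)$, keeping solutions continuous up to the interval endpoints; on the large side one invokes the rigidity of representations of $su(2)$ into $SU(2)$ --- the polar parts of the $T_i$ are given by such a representation, hence fixed in a suitable family of unitary gauges and preserved in the limit --- so that the normal-form analysis of Section \ref{sec:boundary-conditions} (passing to a constant gauge via $ds/dz+A_+s=0$, then to the trivialisation of Sequence (\ref{pushdown-sequence})) goes through unchanged and gives $T(0)$ the same normal form; and irreducibility holds because a reducible limit would correspond to instanton charge bubbling off, incompatible with the constancy of the charge along the family. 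Once the conditions are verified, $T(0)$ produces a caloron by the opposite transform, and since the transform is involutive on the generic members it remains so in the limit, closing the circle.

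The main obstacle is this last step of B): proving that the limit solution to Nahm's equations obtained from a caloron in the closure of the generic locus genuinely satisfies every one of the large-side boundary conditions. The delicate point is to control simultaneously, in the limit, the $R_i$ (their simple poles and the $su(2)$-representation formed by their residues) and the off-diagonal $O(\hat z^{(j-1)/2})$ terms of Section \ref{sec:gaugefields}-B; the argument hinges on the rigidity of $SU(2)$-representations together with the fact that the limiting companion block $\mathcal{C}(p(\zeta,0))$ of Equation (\ref{normal-form-schem}) is already pinned down by the convergent spectral curves through Equation (\ref{determinant}), while the remaining $\OO^k$ summand is controlled by the vanishing theorem. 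Keeping all of these compatible in the limit is the crux of the proof.
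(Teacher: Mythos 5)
Your proposal reproduces the paper's own argument essentially verbatim: the same four facts composed around the triangle of caloron/Nahm/spectral data for part A), and the same continuity-plus-degeneration argument for part B), including the appeal to the vanishing theorem on the small side, rigidity of $su(2)$-representations on the large side, and constancy of charge for irreducibility. Nothing is missing and nothing differs in substance from the text of Section \ref{sec:closingcircle}.
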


\section{Moduli}\label{moduli}
The equivalence exhibited above allows us to classify calorons by
classifying appropriate solutions to Nahm's equations. One has to
guide us the example of monopoles, as classified in
\cite{DonaldsonSU2,hurtubiseClassification};
the (framed) monopoles for gauge group $G$, with
symmetry breaking to a torus  at infinity, charge $k$ are classified
by the space $Rat_k(\bbp^1, G_\bbc/B)$ of based degree $k$ rational
maps from  the Riemann sphere into the flag manifold $G_\bbc/B$. As
our calorons are Ka\v c--Moody monopoles, we should have the same theorem,
with framed calorons equivalent to rational maps from  $\bbp^1$ into
the loop group. Following an idea developed in \cite{Atiyah1984}, one
thinks of these as bundles on $\bbp^1\times\bbp^1$, with some extra
data of a flag along a line, and some framing.

The rank 2 bundle corresponding to an $SU(2)$ caloron, again following
the example of monopoles, should be the restriction of the bundle $E$
on $\tilde{\cal T}$ corresponding to the caloron, to the inverse image
of a point, say $\zeta = 0$, in $\bbp^1$. This inverse image is
$\bbp^1\times\bbc$; along the divisor $\{0\}\times\bbc$, the bundle
has the flag $E_0$, and along $\{\infty\}\times\bbc$, the bundle has
the flag $E_\infty$. We extend this bundle to infinity, using a framing,
in such a way that $E_\infty$ is a trivial subbundle,
and $E_0$ has degree $-j$. This bundle will be  corresponding to
the caloron.

The theorem, however, is proven in terms of solutions to Nahm's
equations; indeed, we show in \cite{benoitjacques1} that both
bundles and solutions to Nahm's equations satisfying the appropriate
conditions are describable in terms of a geometric quotient of a
family of matrices.

\begin{theorem}
Let $k\ge 1, j\ge 0$ be integers. There is an equivalence between

1) vector bundles $E$ of rank two on $\bbp^1\times \bbp^1$, with
   $c_1(E) = 0, c_2(E) =k$,
trivialised along $\bbp^1\times\{\infty\}\cup\{\infty\}\times\bbp^1$,
with a  flag $\phi\colon{\cal O}(-j)\hookrightarrow E$ of degree
$j$ along $\bbp^1\times\{0\}$ such that $\phi
(\infty) ({\cal O}(-j)) = {\rm span}(0,1)$;

2) framed, irreducible solutions to Nahm's equations
on the circle, with rank $k$ over $(\mu_1,\mu_0-\mu_1)$, rank $k+j$ over
$(-\mu_1, \mu_1)$, with the boundary conditions defined above,
modulo the action of the unitary gauge group;

3) complex matrices $A,B$ ($k\times k$), $C$ ($k\times 2$), $D$ ($2\times k$),
$A'$ ($j\times k$),
$B'$ ($1\times k$), $C'$ ($j\times 2$), satisfying the
monad equations
\begin{align*}
  [A,B] + CD&=0,                                           \\
  \begin{pmatrix}B'\\0\end{pmatrix}A + S(j,0)A' - A'B -C'D&=0,\\
  -e_+ A'+ \begin{pmatrix}1&0\end{pmatrix} D &=0,
\end{align*}
and the genericity conditions
\begin{align*}
\begin{pmatrix}A-y\\B-x\\D\end{pmatrix}
            &\text{ is injective for all }x,y\in \bbc,
\end{align*}      
\begin{align*}    
\begin{pmatrix}x-B&A-y& C\end{pmatrix}
            &\text{ is surjective for all }x,y\in \bbc,\\
\begin{pmatrix}x-B&A& C& 0\\ 
           \begin{pmatrix}-B'\\0\end{pmatrix}& A'& C'& x-S(j,0)\\
 0&0& \begin{pmatrix}1&0\end{pmatrix}& -e_+\end{pmatrix}
            &\text{ is surjective for all }x \in \bbc, \\
\begin{pmatrix}\begin{pmatrix}A\\
A'\end{pmatrix}&\begin{pmatrix}C_2\\ C'_2
       \end{pmatrix}&M\begin{pmatrix}C_2\\ C'_2\end{pmatrix}&\cdots&
            M^{j-1}\begin{pmatrix}C_2\\ C'_2\end{pmatrix}
       \end{pmatrix}&\text{ is an isomorphism,}
\end{align*}
 modulo the action of $Gl(k,\bbc)$ given by
\begin{equation*}
(A,B,C,D,A',B',C')\mapsto
        (gAg^{-1}, gBg^{-1}, gC, Dg^{-1},A'g^{-1}, B' g^{-1},C').
\end {equation*}
Here $C_i$, $C_i'$ are the $i$th column of $C$, $C'$, and
 $D_i$ the $i$th row of $D$, Equation (\ref{eqn:defS}) gives $S(j,0)$, and
$e_{+}:=\begin{pmatrix}0&\cdots &0 &1\end{pmatrix}$,
\begin{equation*}
M:=\begin{pmatrix}B& -C_1e_+\\ \begin{pmatrix} B'\\0\end{pmatrix}&S(j,0)-C_1'e_+\end{pmatrix}.
\end{equation*}
\end{theorem}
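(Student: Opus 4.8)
The plan is to establish the two equivalences $1\leftrightarrow3$ and $2\leftrightarrow3$ directly, using the matrix data of part~3 as a common linear-algebra hub, and then to combine this with the identification of the caloron with the bundle $E|_{\zeta=0}$ of part~1 described above and with the equivalence of calorons and solutions to Nahm's equations established in Section~\ref{sec:closingcircle}. Together these give the full statement and, exactly as in the generic discussion of Section~\ref{sec:closingcircle}, show that all the transforms in sight are pairwise inverse.

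For $1\leftrightarrow3$ one uses the theory of monads on $\bbp^1\times\bbp^1$, adapted to the framed rank-two case. A bundle $E$ with $c_1(E)=0$, $c_2(E)=k$, trivialised along $\bbp^1\times\{\infty\}\cup\{\infty\}\times\bbp^1$, is the cohomology of a monad whose middle term is built from the $k$-dimensional space $H^1(E(-1,-1))$: the operators $A$ and $B$ are multiplication by the two affine coordinates, and $C$ ($k\times2$), $D$ ($2\times k$) arise from the trivialisation along the infinity lines through the connecting maps of the usual cohomology sequences. The relation produced this way is exactly $[A,B]+CD=0$, and the first two genericity conditions (injectivity and surjectivity of the two displayed $x,y$-families) are precisely what make the monad's cohomology a bundle rather than merely a sheaf. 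The new ingredient is the degree-$j$ flag $\phi\colon\OO(-j)\hookrightarrow E$ along $\bbp^1\times\{0\}$ normalised by $\phi(\infty)(\OO(-j))=\mathrm{span}(0,1)$. Pushing the sequence $0\to\OO(-j)\to E|_{\bbp^1\times\{0\}}\to Q\to0$ through the monad produces a $j$-dimensional space on which $S(j,0)$ acts --- the nilpotent shift being forced by the normalisation of $\phi$, just as $S(j,\eta)$ appears in the normal form (\ref{normal-form}) of Section~\ref{sec:boundary-conditions} --- together with the auxiliary maps $A'$ ($j\times k$), $B'$ ($1\times k$), $C'$ ($j\times2$); compatibility with the monad of $E$ yields the two remaining relations. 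The remaining surjectivity condition and the isomorphism condition on $\begin{pmatrix}\begin{pmatrix}A\\A'\end{pmatrix}&\begin{pmatrix}C_2\\C'_2\end{pmatrix}&\cdots&M^{j-1}\begin{pmatrix}C_2\\C'_2\end{pmatrix}\end{pmatrix}$ are what make the image of $\phi$ a genuine sub-line-bundle of the right degree and framing; conversely, from matrix data satisfying all these conditions one reverses the construction. The $Gl(k,\bbc)$-action is change of basis on $H^1(E(-1,-1))$, the $j$-dimensional flag space being already rigidified by the companion normal form, so the geometric quotient is what classifies.

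For $2\leftrightarrow3$ one uses the ADHMN-type dictionary. From the matrix data one produces $T_i(z)$ on the circle by the familiar recipe: on the small interval $(\mu_1,\mu_0-\mu_1)$ the data $A,B,C,D$ play the role of the ADHM matrices, while near $\pm\mu_1$ the enlarged system built from $A',B',C',S(j,0),e_+$ and $M$ yields, after solving $ds/dz+A_+s=0$, exactly the companion normal form (\ref{normal-form}); thus the boundary behaviour is automatic --- regular limits from the small side matching through $\pi,\iota$, and simple poles with irreducible $su(2)$ residues from the large side, the residue data being recorded by $S(j,0)$ and the cyclic vectors $M^i\begin{pmatrix}C_2\\C'_2\end{pmatrix}$. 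Conversely, a solution of Nahm's equations with the stated boundary and irreducibility conditions carries the flow of sheaves ${\cal L}_z$ of Proposition~\ref{Nahm-on-interval} and its boundary limits of Proposition~\ref{nahm-boundary}; passing to global sections in Sequence~(\ref{basic-sequence}) and its boundary version (\ref{fibre-sequence2}) furnishes the finite-dimensional spaces carrying precisely the maps $A,B,C,D,A',B',C'$, and the irreducibility of the $T_i$ translates into the genericity and cyclicity conditions. That the two constructions are compatible with the already-proven equivalence $1\leftrightarrow2$ follows because both sides carry the same spectral data, which closes the triangle.

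The step I expect to be the real obstacle is the matching at the two boundary points: showing that an \emph{arbitrary} solution of Nahm's equations satisfying the boundary and irreducibility conditions produces matrix data fulfilling \emph{all} of the genericity conditions --- most delicately the condition that $\begin{pmatrix}\begin{pmatrix}A\\A'\end{pmatrix}&\begin{pmatrix}C_2\\C'_2\end{pmatrix}&\cdots&M^{j-1}\begin{pmatrix}C_2\\C'_2\end{pmatrix}\end{pmatrix}$ be an isomorphism --- and, conversely, that these conditions force the Nahm data to have precisely the normal form (\ref{normal-form}) with no residual degeneracy. This is exactly where one must use the rigidity of $su(2)$-representations (so that the polar parts are fixed in suitable unitary gauges, as in Section~\ref{sec:closingcircle}), the analytic control near the regular singular points from Section~\ref{sec:boundary-conditions}, and a careful count of how a degree-$j$ sub-line-bundle sits inside the rank-$(k+j)$ bundle; once that is in hand the remainder is monad bookkeeping and the standard ADHMN correspondence.
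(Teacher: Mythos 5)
The paper itself does not prove this theorem: it states it and explicitly defers the full argument to the companion paper (cited as ``Calorons, Nahm's equations on $S^1$ and bundles over $\PPP$''), announcing only the strategy, namely that both the bundles of 1) and the Nahm solutions of 2) are there shown to be describable as a geometric quotient of the matrix data in 3). Your outline --- using the matrix data as the common hub, with $1\leftrightarrow3$ via a framed monad on $\bbp^1\times\bbp^1$ carrying the extra flag data $A',B',C',S(j,0),e_+$, and $2\leftrightarrow3$ via an ADHMN/Donaldson-type dictionary built from sections over the $\zeta=0$ fibre --- is consistent with that announced strategy, and you have also correctly identified where the weight of the argument must fall: verifying the genericity and cyclicity conditions (in particular the isomorphism condition on the matrix $\bigl(\begin{smallmatrix}A\\A'\end{smallmatrix}\ \begin{smallmatrix}C_2\\C'_2\end{smallmatrix}\ \cdots\ M^{j-1}\begin{smallmatrix}C_2\\C'_2\end{smallmatrix}\bigr)$) from the Nahm boundary behaviour, and conversely. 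Since the present paper contains nothing beyond that one-sentence announcement, there is no detailed argument to check you against; what you have written is a plausible and correctly structured programme, but at exactly the steps you flag as delicate --- extracting the companion normal form from $ds/dz+A_+s=0$, showing the irreducibility of the Nahm data forces cyclicity, and closing the triangle with the spectral-data identifications of Section~\ref{sec:closingcircle} --- it remains an outline rather than a proof, which is precisely the work carried out in the companion paper.
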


We see in \cite{benoitjacques1} that  $B$ and $M$ are
conjugate to the matrix $T_1+iT_2$ on the intervals
$(\mu_1,\mu_0-\mu_1)$ and $(-\mu_1, \mu_1)$ respectively.
Choosing $B$ diagonal, and $B$ and $M$
with distinct and disjoint eigenvalues, it is not difficult to build
solutions to the various matrix constraints. Hence the
spectral curves for the obtained solutions to Nahm's equations are reduced and
have no common components, as the spectral curve over $\zeta = 0$ is
the spectrum of $T_1+iT_2$. Thus generic generic solutions to Nahm's
equations, and so generic calorons, exist.


\begin{thebibliography}{10}

\bibitem{Adams-Harnad-Hurtubise}
M.~R. Adams, J.~Harnad, and J.~Hurtubise.
\newblock Isospectral {H}amiltonian flows in finite and infinite dimensions.
  {II}. {I}ntegration of flows.
\newblock {\em Comm. Math. Phys.}, 134(3):555--585, 1990.

\bibitem{Atiyah1984}
M.~F. Atiyah.
\newblock Instantons in two and four dimensions.
\newblock {\em Comm. Math. Phys.}, 93(4):437--451, 1984.

\bibitem{ADHM}
M.~F. Atiyah, V.~G. Drinfel{\cprime}d, N.~J. Hitchin, and Y.~I. Manin.
\newblock Construction of instantons.
\newblock {\em Phys. Lett. A}, 65(3):185--187, 1978.

\bibitem{BHMM}
C.~P. Boyer, B.~M. Mann, J.~C. Hurtubise, and R.~J. Milgram.
\newblock The topology of the space of rational maps into generalized flag
  manifolds.
\newblock {\em Acta Math.}, 173(1):61--101, 1994.

\bibitem{braam1989}
P.~J. Braam and P.~van Baal.
\newblock Nahm's transformation for instantons.
\newblock {\em Comm. Math. Phys.}, 122(2):267--280, 1989.

\bibitem{benoitthesis}
B.~Charbonneau.
\newblock {\em Analytic aspects of periodic instantons}.
\newblock PhD thesis, Massachusetts Institute of Technology, 2004.
\newblock (see www.math.mcgill.ca/\~{}charbonneau).

\bibitem{benoitpaper}
B.~Charbonneau.
\newblock {From spatially periodic instantons to singular monopoles}.
\newblock {\em Comm. Anal. Geom.}, 14(1):183--214, 2006, arXiv:math.DG/0410561.

\bibitem{benoitjacques1}
B.~Charbonneau and J.~Hurtubise.
\newblock Calorons, {N}ahm's equations on {$S^1$} and bundles over
  {$\PPP$}, 2006, arXiv:math.DG/0610804.

\bibitem{cherkis2001}
S.~Cherkis and A.~Kapustin.
\newblock Nahm transform for periodic monopoles and {$\mathcal{N}=2$} super
  {Y}ang--{M}ills theory.
\newblock {\em Comm. Math. Phys.}, 218(2):333--371, 2001,
  arXiv:hep-th/0006050v2.

\bibitem{corrigan-goddard}
E.~Corrigan and P.~Goddard.
\newblock Construction of instanton and monopole solutions and reciprocity.
\newblock {\em Ann. Physics}, 154(1):253--279, 1984.

\bibitem{DonaldsonSU2}
S.~K. Donaldson.
\newblock Nahm's equations and the classification of monopoles.
\newblock {\em Comm. Math. Phys.}, 96(3):387--407, 1984.

\bibitem{DK}
S.~K. Donaldson and P.~B. Kronheimer.
\newblock {\em The geometry of four-manifolds}.
\newblock Oxford Mathematical Monographs. The Clarendon Press Oxford University
  Press, New York, 1990.

\bibitem{garland-murray}
H.~Garland and M.~K. Murray.
\newblock Kac--{M}oody monopoles and periodic instantons.
\newblock {\em Comm. Math. Phys.}, 120(2):335--351, 1988.

\bibitem{Gravesen}
J.~Gravesen.
\newblock On the topology of spaces of holomorphic maps.
\newblock {\em Acta Math.}, 162(3-4):247--286, 1989.

\bibitem{hitchinGeodesics}
N.~J. Hitchin.
\newblock Monopoles and geodesics.
\newblock {\em Comm. Math. Phys.}, 83(4):579--602, 1982.

\bibitem{hitchinMonopoles}
N.~J. Hitchin.
\newblock On the construction of monopoles.
\newblock {\em Comm. Math. Phys.}, 89(2):145--190, 1983.

\bibitem{hurtubiseClassification}
J.~Hurtubise.
\newblock The classification of monopoles for the classical groups.
\newblock {\em Comm. Math. Phys.}, 120(4):613--641, 1989.

\bibitem{hurtubiseMurray}
J.~Hurtubise and M.~K. Murray.
\newblock On the construction of monopoles for the classical groups.
\newblock {\em Comm. Math. Phys.}, 122(1):35--89, 1989.

\bibitem{hurtubiseMurraySpectral}
J.~Hurtubise and M.~K. Murray.
\newblock Monopoles and their spectral data.
\newblock {\em Comm. Math. Phys.}, 133(3):487--508, 1990.

\bibitem{jardim2001}
M.~Jardim.
\newblock Construction of doubly-periodic instantons.
\newblock {\em Comm. Math. Phys.}, 216(1):1--15, 2001, arXiv:math.DG/9909069.

\bibitem{jardim2002b}
M.~Jardim.
\newblock Classification and existence of doubly-periodic instantons.
\newblock {\em Q. J. Math.}, 53(4):431--442, 2002, arXiv:math.DG/0108004.

\bibitem{jardim2002}
M.~Jardim.
\newblock Nahm transform and spectral curves for doubly-periodic instantons.
\newblock {\em Comm. Math. Phys.}, 225(3):639--668, 2002,
  arXiv:math.AG/9909146.

\bibitem{jardimsurvey}
M.~Jardim.
\newblock A survey on {N}ahm transform.
\newblock {\em J. Geom. Phys.}, 52(3):313--327, 2004, arXiv:math.DG/0309305.

\bibitem{mazzeomelrose}
R.~Mazzeo and R.~B. Melrose.
\newblock Pseudodifferential operators on manifolds with fibred boundaries.
\newblock {\em Asian J. Math.}, 2(4):833--866, 1998, arXiv:math.DG/9812120.

\bibitem{nahmEqn}
W.~Nahm.
\newblock All self-dual multimonopoles for arbitrary gauge groups.
\newblock In {\em Structural elements in particle physics and statistical
  mechanics (Freiburg, 1981)}, volume~82 of {\em NATO Adv. Study Inst. Ser. B:
  Physics}, pages 301--310. Plenum, New York, 1983.

\bibitem{Nahm}
W.~Nahm.
\newblock Self-dual monopoles and calorons.
\newblock In {\em Group theoretical methods in physics (Trieste, 1983)}, volume
  201 of {\em Lecture Notes in Phys.}, pages 189--200. Springer, Berlin, 1984.

\bibitem{nye}
T.~M.~W. Nye.
\newblock {\em The geometry of calorons}.
\newblock PhD thesis, U. of Edinburgh, 2001.
\newblock arxiv:hep-th/0311215.

\bibitem{singernye}
T.~M.~W. Nye and M.~A. Singer.
\newblock An {$L\sp 2$}-index theorem for {D}irac operators on {$S\sp
  1\times\mathbf{R}\sp 3$}.
\newblock {\em J. Funct. Anal.}, 177(1):203--218, 2000, arXiv:math.DG/0009144.

\bibitem{schenk1988}
H.~Schenk.
\newblock On a generalised {F}ourier transform of instantons over flat tori.
\newblock {\em Comm. Math. Phys.}, 116(2):177--183, 1988.

\bibitem{szilard}
S.~Szabo.
\newblock {\em {Nahm transform for integrable connections on the Riemann
  sphere}}.
\newblock PhD thesis, IRMA, Universit\'e de Strasbourg, 2005,
  arXiv:math.DG/0511471.

\bibitem{TaubesPathConnectedYM}
C.~H. Taubes.
\newblock Path-connected {Y}ang--{M}ills moduli spaces.
\newblock {\em J. Differential Geom.}, 19(2):337--392, 1984.

\end{thebibliography}

\def\cprime{$'$}

\where
\end{document}